\documentclass{article}

\usepackage{vmargin}

\setpapersize{A4}
\setmargins{2.5cm}       
{1.5cm}                        
{16.5cm}                      
{23.42cm}                    
{10pt}                           
{1cm}                           
{0pt}                             
{2cm}                           
\date{}

\usepackage{enumerate} 
\usepackage{multicol}
\usepackage{amsmath,amssymb,latexsym,
}
\usepackage{latexsym,amssymb}
\usepackage{amsmath}
\usepackage{mathrsfs}
\usepackage{multicol}
\usepackage{bussproofs}
\usepackage{enumerate}
\usepackage{float}
\usepackage{tikz}
\usepackage{amsfonts}   

\usepackage{amsthm}

\newtheorem{defi}{Definition}[section]
\newtheorem{theo}[defi]{Theorem}
\newtheorem{lem}[defi]{Lemma}

\newtheorem{prop}[defi]{Proposition}

\newtheorem{cor}[defi]{Corollary}

\def\Bt{\mathbb{B}_t}
\def\A{\mathbf{A}}
\def\B{\mathbf{B}}
\def\P{\mathbf{P}}
\def\F{\mathbf{F}}
\def\H{\mathbf{H}}
\def\G{\mathbf{G}}
\def\K{\mathbb{K}}
\def\La{\mathcal{L}}

\def\Fs{Fm_{SK_t}}

\begin{document}

\title{Tense Logic via Truth Degrees: An Algebraic Completeness Result for Kashima’s Calculus}         
\author{Mart\'in Figallo\textsuperscript{1}, Jonathan Sarmiento\textsuperscript{2} and Luis Pezzini\textsuperscript{2}\\[1ex] 
\textsuperscript{1}Departamento de Matem\'atica e Instituto de Matem\'atica (INMABB), \\ Universidad Nacional del Sur, Bah\'ia Blanca, Argentina\\ 
\textsuperscript{2}Departamento de Matem\'atica e Instituto de Ciencias B\'asicas, \\ Universidad Nacional de San Juan, San Juan, Argentina}  
\date{}          
\maketitle

\begin{abstract}
We study the minimal tense logic $K_t$ from an algebraic and
proof-theoretic perspective. We introduce the degree-of-truth-preserving
logic associated with the class of tense Boolean algebras. We then
introduce a sequent calculus for this logic and establish its soundness
and completeness with respect to tense Boolean algebras by means of an
adaptation of the Lindenbaum--Tarski construction. Consequently, this
calculus provides an additional syntactic presentation of the minimal
tense logic $K_t$. We also provide a second, purely syntactic proof of
completeness.

Furthermore, we establish an algebraic soundness and completeness
theorem for Kashima's Gentzen-style calculus. To this end, we develop an
adaptation of the Lindenbaum--Tarski construction to Kashima's nested
sequent framework, which allows us to construct the algebraic semantics
directly from the proof-theoretic system. This yields a direct algebraic
completeness proof for Kashima's calculus and connects its nested-sequent
formulation with the algebraic semantics of tense Boolean algebras.
\end{abstract}



\section{Introduction}

The term {\em tense logic} encompasses a variety of logical systems
and formal frameworks designed to reason about time and temporal
information. More specifically, it refers to the approach introduced
by Arthur Prior, who developed tense logic as a modal framework for
representing temporal notions. His central insight was that temporal
operators can be understood as modal operators whose accessibility
relation is interpreted temporally. This approach has since been
extensively developed in logic and computer science.

Tense logics have a broad range of applications. They provide a formal framework for addressing philosophical questions about time and for analyzing the semantics of temporal expressions in natural language. They also play an important role in artificial intelligence, where they can be used to represent temporal knowledge, and in computer science, where they provide formal tools for specifying, analyzing, and verifying the behavior of programs and systems. Prior's introduction of the well-known tense operators $\P$, $\F$, $\H$, and $\G$ was strongly motivated by the way tense functions in natural language (see \cite{Prior57, Prior67, Prior68}). The intended meanings of these four operators are as follows: $\P$: {\em It has at some time been the case that $\dots$}''; $\F$: {\em It will at some time be the case that $\dots$}''; $\H$: {\em It has always been the case that $\dots$}''; and $\G$: {\em It will always be the case that $\dots$}''.

The {\em minimal tense logic} $K_t$ (see \cite{K}) is the logic whose axioms include all classical tautologies together with the following schemata:
$$\G(\varphi \rightarrow \psi) \rightarrow (\G\varphi \rightarrow \G\psi),$$
$$\H(\varphi \rightarrow \psi) \rightarrow (\H\varphi \rightarrow \H\psi),$$
$$ \varphi \rightarrow \G\P\varphi,$$
$$\varphi \rightarrow \H\F\varphi.$$
The inference rules are classical {\em modus ponens} together with
$$
\frac{\varphi}{\G\varphi},
\qquad
\frac{\varphi}{\H\varphi}.
$$
The first two axiom schemata are the temporal counterparts of the familiar $K$-axiom from modal logic, which explains the notation $K_t$. The third and fourth schemata express the interaction between the past and future operators, while the final two rules are the corresponding necessitation rules.

The system $K_t$ involves two pairs of tense operators: $\P$ and $\H$ for the past, and $\F$ and $\G$ for the future. The operators $\P$ and $\F$ are commonly called the \emph{weak} tense operators, whereas $\H$ and $\G$ are referred to as the \emph{strong} tense operators. These operators are interdefinable by means of the equivalences

$$\P\varphi\equiv \neg\H\neg\varphi,\qquad
\H\varphi\equiv\neg\P\neg\varphi,
\qquad
\F\varphi\equiv \neg\G\neg\varphi,\qquad
\G\varphi\equiv\neg\F\neg\varphi.
$$

From the algebraic point of view, {\em tense Boolean algebras}, also called {\em tense algebras}, provide an algebraic semantics for $K_t$ (see \cite{Burges84,K}). They can be defined as Boolean algebras equipped with two unary normal and join-preserving operators $\F$ and $\P$ that are conjugate, that is,
$$
x \wedge \P y = 0
\quad\text{iff}\quad
\F x \wedge y = 0.
$$
The operators $\H$ and $\G$ can then be defined by duality as $\H x = \neg\P\neg x$ and $\G x = \neg\F\neg x$. Equivalently, tense Boolean algebras can be described by taking $\G$ and $\H$ as primitive operators. These algebras provide the natural algebraic counterpart of the minimal tense logic $K_t$.

On the proof-theoretic side, Kashima \cite{Ka} developed Gentzen-style sequent calculi for a family of propositional tense logics, including $K_t$. A main objective of this work was to obtain cut-free sequent calculi and to establish their soundness and completeness with respect to Kripke semantics. A central feature of Kashima's approach is the use of {\em nested sequents}, which extend ordinary sequents by allowing a hierarchical, tree-like structure. This framework provides a natural proof-theoretic treatment of the interaction between past and future modalities and makes it possible to formulate cut-free calculi for the tense logics under consideration.

The aim of this paper is twofold. First, we introduce the propositional logic that preserves degrees of truth with respect to the class of tense Boolean algebras. We prove that this degree-of-truth-preserving logic coincides with the minimal tense logic $K_t$. Thus, $K_t$ admits, in addition to its usual truth-preserving algebraic semantics, a characterization in terms of preservation of degrees of truth. More precisely, We introduce a sequent calculus for this logic and establish its soundness
and completeness with respect to tense Boolean algebras by means of an
adaptation of the Lindenbaum--Tarski construction. Consequently, this
calculus provides an additional syntactic presentation of the minimal
tense logic $K_t$. We also provide a second, purely syntactic proof of
completeness.

Second, we establish an algebraic soundness and completeness theorem for Kashima's Gentzen-style calculus. The main ingredient of the completeness proof is an adaptation of the Lindenbaum--Tarski construction to Kashima's nested sequent framework. This construction allows us to obtain the required tense Boolean algebra directly from the proof-theoretic system and yields a direct algebraic completeness proof for Kashima's calculus. Thus, our results establish a direct connection between Kashima's nested-sequent calculus and the algebraic semantics of tense Boolean algebras. This connection paves the way for investigating an algebraic version of the cut-elimination theorem in the tense setting, extending the algebraic approach to cut elimination developed in \cite{BJO} to Kashima's framework.

\section{Tense Boolean algebras}

Tense Boolean algebras provide an algebraic semantics for the tense logic $K_t$. They are Boolean algebras equipped with additional unary operators that capture temporal modalities. In this paper, we consider tense Boolean algebras in which the four temporal operators are taken as primitive.

Recall that a Boolean algebra is an algebra
$\A=\langle A,\wedge,\vee,\neg,0,1\rangle$ of type $(2,2,1,0,0)$ such that its reduct
$\langle A,\wedge,\vee,0,1\rangle$ is a bounded distributive lattice and, for every $x\in A$,

$$x\wedge\neg x=0
\qquad\text{and}\qquad
x\vee\neg x=1.
$$
As usual, the lattice order is defined by $x\leq y$ if and only if
$x\wedge y=x$. Moreover, for every $x,y,z\in A$,
$$
x\wedge y\leq z
\quad\text{if and only if}\quad
x\leq\neg y\vee z.
$$

A {\em tense Boolean algebra} is a structure
$\A=\langle A,\wedge,\vee,\neg,\G,\H,\F,\P,0,1\rangle$
of type $(2,2,1,1,1,1,1,0,0)$ such that its reduct
$\langle A,\wedge,\vee,\neg,0,1\rangle$ is a Boolean algebra and the following conditions hold:
\begin{itemize}
\item[(t0)] $\F\neg x=\neg\G x$ and $\P\neg x=\neg\H x$;
\item[(t1)] $\G 1=1$ and $\H 1=1$;
\item[(t2)] $\G(x\wedge y)=\G x\wedge\G y$ and
$\H(x\wedge y)=\H x\wedge\H y$;
\item[(t3)] $x\leq\G\P x$ and $x\leq\H\F x$.
\end{itemize}

The following proposition collects some standard properties of tense Boolean algebras that will be used throughout the paper.

\begin{prop}\label{prop3.1}
Let $\A$ be a tense Boolean algebra. Then, for every $x,y\in A$, the following properties hold:
\begin{enumerate}
\item[\rm (t4)] If $x\leq y$, then
$\G x\leq\G y$, $\H x\leq\H y$,
$\F x\leq\F y$ and $\P x\leq\P y$.

\item[\rm (t5)] $\F 0=0$ and $\P 0=0$.

\item[\rm (t6)] $\F(x\vee y)=\F x\vee\F y$ and
$\P(x\vee y)=\P x\vee\P y$.

\item[\rm (t7)] $\P\G x\leq x$ and $\F\H x\leq x$.

\item[\rm (t8)] $\G(x\vee y)\leq\F x\vee\G y$ and
$\H(x\vee y)\leq\P x\vee\H y$.

\item[\rm (t9)] $\G x\wedge\F y\leq\F(x\wedge y)$ and
$\H x\wedge\P y\leq\P(x\wedge y)$.

\item[\rm (t10)] $x\wedge\F y\leq\F(\P x\wedge y)$ and
$x\wedge\P y\leq\P(\F x\wedge y)$.

\item[\rm (t11)] $\F x\wedge y=0$ if and only if
$x\wedge\P y=0$.

\item[\rm (t12)] $x\vee\G y=1$ if and only if
$\H x\vee y=1$.

\item[\rm (t13)] $\P x\leq y$ if and only if
$x\leq\G y$.

\item[\rm (t14)] $\F x\leq y$ if and only if
$x\leq\H y$.
\end{enumerate}
\end{prop}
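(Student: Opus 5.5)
We derive each item from (t0)--(t3) and Boolean algebra, always proving the $\G/\F$ half and noting that the $\H/\P$ half follows by the symmetric argument. The order matters: (t4), (t5) and (t6) come first, (t7) and the adjunctions (t13)--(t14) next, and the remaining items fall out as consequences.

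For (t4), suppose $x\leq y$. Then $x=x\wedge y$, so (t2) gives $\G x=\G x\wedge\G y\leq\G y$. For $\F$, we have $\neg y\leq\neg x$, hence $\G\neg y\leq\G\neg x$. By (t0), $\F x=\neg\G\neg x$, which is the negation of $\G\neg x$ and so lies below $\neg\G\neg y=\F y$. Items (t5) and (t6) come by dualizing through (t0):
$$\F 0=\neg\G 1=0,\qquad \F(x\vee y)=\neg\G(\neg x\wedge\neg y)=\neg\G\neg x\vee\neg\G\neg y=\F x\vee\F y.$$

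The key step is (t7). From (t3) applied to $\neg x$ we get $\neg x\leq\H\F\neg x=\H\neg\G x$. By (t0), $\H\neg\G x=\neg\P\neg\neg\G x=\neg\P\G x$. Contraposing $\neg x\leq\neg\P\G x$ gives $\P\G x\leq x$, and symmetrically $\F\H x\leq x$. The adjunctions then follow. For (t13), if $\P x\leq y$, then (t3) and (t4) give $x\leq\G\P x\leq\G y$. Conversely, if $x\leq\G y$, then (t4) and (t7) give $\P x\leq\P\G y\leq y$. Item (t14) is the same argument with the roles of past and future swapped.

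Everything else follows from the adjunctions.
\begin{itemize}
\item[(t11)] $\F x\wedge y=0$ iff $\F x\leq\neg y$ iff $x\leq\H\neg y=\neg\P y$ by (t14) and (t0), iff $x\wedge\P y=0$.
\item[(t12)] Apply (t11) to negations and use (t0): $x\vee\G y=1$ iff $\neg x\wedge\F\neg y=0$ iff $\P\neg x\wedge\neg y=0$ iff $\neg\H x\wedge\neg y=0$ iff $\H x\vee y=1$.
\item[(t9)] We have $\G x\wedge\F y\leq\F(x\wedge y)$ iff $\G x\leq\neg\F y\vee\F(x\wedge y)$. Since $y\leq\neg x\vee(x\wedge y)$, (t6) gives $\F y\leq\F\neg x\vee\F(x\wedge y)=\neg\G x\vee\F(x\wedge y)$. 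Meeting both sides with $\G x$ yields the claim.
\item[(t8)] This is the dual of (t9). Apply (t9) with $\neg x$ and $\neg y$ to get $\G\neg x\wedge\F\neg y\leq\F(\neg x\wedge\neg y)$, rewrite each term via (t0), and negate.
\item[(t10)] Use (t14) to reduce $\F$-statements to $\H$-statements. Since $y\leq\neg\P x\vee(\P x\wedge y)$, monotonicity and (t6) give $\F y\leq\F\neg\P x\vee\F(\P x\wedge y)$. Next, (t0) and (t7) give $\F\neg\P x=\F\H\neg x\leq\neg x$. Meeting with $x$ gives $x\wedge\F y\leq\F(\P x\wedge y)$, and the past version is symmetric.
\end{itemize}

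The main obstacle I expect is getting the (t0) dualities right, in particular the rewriting $\F\neg\P x=\F\H\neg x$ used in (t10), which needs $\P x=\neg\H\neg x$ from (t0), and similar rewrites in (t7). None of these steps involves more than careful bookkeeping.
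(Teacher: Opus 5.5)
Your proposal is correct. The paper itself gives no proof of this proposition: it presents (t4)--(t14) as ``standard properties'' of tense Boolean algebras and leaves them unproved. So there is no argument in the paper to compare against, and your proposal supplies the missing self-contained derivation from (t0)--(t3).

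Your ordering is sound and non-circular.
\begin{itemize}
\item (t4)--(t6) use only (t0)--(t2).
\item The key item (t7) comes from instantiating (t3) at $\neg x$ and rewriting with the two forms $\F z=\neg\G\neg z$ and $\H z=\neg\P\neg z$ of (t0).
\item The adjunctions (t13) and (t14) then need only (t3), (t4) and (t7).
\item (t8)--(t12) are consequences of what precedes them.
\end{itemize}
Your appeal to the mirror symmetry $\G\leftrightarrow\H$, $\F\leftrightarrow\P$ is justified, since (t0)--(t3) are invariant under it.

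Three cosmetic points:
\begin{itemize}
\item In (t9), the step from $y\leq\neg x\vee(x\wedge y)$ to $\F y\leq\F\neg x\vee\F(x\wedge y)$ needs monotonicity (t4) before (t6), as you correctly note in (t10).
\item In (t9), the opening residuation equivalence is never used, because meeting with $\G x$ already finishes the argument.
\item In (t10), you announce that you will use (t14), but you actually use (t7) via $\F\neg\P x=\F\H\neg x\leq\neg x$, which is perfectly fine.
\end{itemize}
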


We denote by $\Bt$ the class of all tense Boolean algebras.

\section{The degree-of-truth-preserving logic associated with $\Bt$}\label{s3}

In this section, we study the degree-of-truth-preserving logic associated with the class of tense Boolean algebras. We begin by recalling the general notion of a degree-of-truth-preserving consequence relation.

Let $\La$ be a propositional signature containing the connectives
$\wedge$ and $\vee$, and let $\K$ be a class of $\La$-algebras such that,
for every $\A\in\K$, the $\{\wedge,\vee\}$-reduct of $\A$ is a lattice
with greatest element $1$. If $\A,\B\in\K$, we denote by
$\operatorname{Hom}(\A,\B)$ the set of all $\La$-homomorphisms from
$\A$ to $\B$. Let $\mathfrak{Fm}$ be the absolutely free $\La$-algebra generated by
a countably infinite set $Var$ of propositional variables.

Intuitively, a consequence relation preserves degrees of truth when the degree of truth of the premises provides a lower bound for the degree of truth of the conclusion. More precisely, the meet of the truth values of the premises is required to be below the truth value of the conclusion.

\begin{defi}\label{D61}
The logic that preserves degrees of truth with respect to $\K$ is the
propositional logic
\[
\mathbf{L}_{\K}^{\leqslant}
=
\langle Fm,\models_{\K}^{\leqslant}\rangle,
\]
where, for every $\Gamma\subseteq Fm$ and $\alpha\in Fm$,
$\models_{\K}^{\leqslant}$ is defined as follows:
\begin{itemize}

\item[\rm (i)] If $\Gamma$ is finite and non-empty, then
\begin{eqnarray*}
\Gamma\models_{\K}^{\leqslant}\alpha
&\Longleftrightarrow&
\forall \A\in\K,\ \forall h\in \operatorname{Hom}(\mathfrak{Fm},\A),\ \forall a\in A,\\
&&\text{if } h(\gamma)\geq a \text{ for every } \gamma\in\Gamma,
\text{ then } h(\alpha)\geq a.
\end{eqnarray*}

\item[\rm (ii)]

$\emptyset\models_{\K}^{\leqslant}\alpha
\Longleftrightarrow
\forall \A\in\K,\ \forall h\in \operatorname{Hom}(\mathfrak{Fm},\A),\
h(\alpha)=1.$

\item[\rm (iii)] If $\Gamma$ is infinite, then
\[
\Gamma\models_{\K}^{\leqslant}\alpha
\Longleftrightarrow
\text{there exists a finite } \Gamma_0\subseteq\Gamma
\text{ such that }
\Gamma_0\models_{\K}^{\leqslant}\alpha.
\]

\end{itemize}
\end{defi}

\begin{prop}\label{P62}
Let $\alpha\in Fm$ and let $\Gamma\subseteq Fm$ be a finite non-empty set.
Then the following conditions are equivalent:
\begin{itemize}
  \item[\rm (a)] $\Gamma\models_{\K}^{\leqslant}\alpha$;
  \item[\rm (b)] for every $\A\in\K$ and every
  $h\in \operatorname{Hom}(\mathfrak{Fm},\A)$, $\displaystyle\bigwedge\{h(\gamma):\gamma\in\Gamma\}\leq h(\alpha)$.
\end{itemize}
\end{prop}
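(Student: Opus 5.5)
The plan is to prove both implications directly from clause (i) of Definition \ref{D61}, using only that the $\{\wedge,\vee\}$-reduct of each $\A\in\K$ is a lattice. Since $\Gamma$ is finite and non-empty, the meet $m_h:=\bigwedge\{h(\gamma):\gamma\in\Gamma\}$ exists in every such lattice. This existence is the only point where the hypotheses on $\Gamma$ matter, so I will state it explicitly: for infinite $\Gamma$ the meet need not exist, and for empty $\Gamma$ it would have to be read as $1$, which is clause (ii).

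For (a) $\Rightarrow$ (b), I fix $\A\in\K$ and $h\in\operatorname{Hom}(\mathfrak{Fm},\A)$, and take $a:=m_h$. By the definition of the meet, $h(\gamma)\geq a$ for every $\gamma\in\Gamma$. Condition (a), through clause (i), then gives $h(\alpha)\geq a=m_h$, which is (b).

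For (b) $\Rightarrow$ (a), I fix $\A$, $h$ and $a\in A$ with $h(\gamma)\geq a$ for all $\gamma\in\Gamma$. Then $a$ is a lower bound of $\{h(\gamma):\gamma\in\Gamma\}$, so $a\leq m_h$ because $m_h$ is the greatest lower bound. Combining this with (b) and transitivity gives $a\leq m_h\leq h(\alpha)$, as clause (i) requires.

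No real obstacle is expected. The only thing to check is that the finite meet is well defined, which follows by induction on $|\Gamma|$ from the binary meet of the lattice, together with associativity and commutativity of $\wedge$.
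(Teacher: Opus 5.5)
Your proposal is correct and follows the same route as the paper, which simply says the equivalence follows directly from Definition~\ref{D61}. You spell out the two directions explicitly: for (a)$\Rightarrow$(b) you instantiate $a$ as the finite meet, and for (b)$\Rightarrow$(a) you use that any common lower bound lies below the greatest lower bound.
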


\begin{proof}
It follows directly from Definition~\ref{D61}.
\end{proof}

\begin{cor}\label{C63}
Let $\alpha_1,\ldots,\alpha_n,\alpha\in Fm$, with $n\geq1$.
Then the following conditions are equivalent:
\begin{itemize}
  \item[\rm (i)] $\alpha_1,\ldots,\alpha_n
  \models_{\K}^{\leqslant}\alpha$;
  \item[\rm (ii)] $\alpha_1\wedge\cdots\wedge\alpha_n
  \models_{\K}^{\leqslant}\alpha$.
\end{itemize}
\end{cor}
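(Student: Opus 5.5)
The plan is to reduce both conditions to the lattice inequality of Proposition~\ref{P62} and compare the two meets. First we apply Proposition~\ref{P62} to the finite non-empty set $\Gamma=\{\alpha_1,\ldots,\alpha_n\}$. This shows that (i) is equivalent to the following: for every $\A\in\K$ and every $h\in\operatorname{Hom}(\mathfrak{Fm},\A)$,
\[
\bigwedge\{h(\alpha_i):1\leq i\leq n\}\leq h(\alpha).
\]
Next we apply Proposition~\ref{P62} to the singleton $\{\alpha_1\wedge\cdots\wedge\alpha_n\}$. This shows that (ii) is equivalent to the condition $h(\alpha_1\wedge\cdots\wedge\alpha_n)\leq h(\alpha)$ for all $\A\in\K$ and all such $h$, since the meet of a singleton is its only element.

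It then suffices to check that, for each $\A\in\K$ and each homomorphism $h$,
\[
h(\alpha_1\wedge\cdots\wedge\alpha_n)=h(\alpha_1)\wedge\cdots\wedge h(\alpha_n)=\bigwedge\{h(\alpha_i):1\leq i\leq n\}.
\]
The first equality holds because $h$ is an $\La$-homomorphism and $\wedge\in\La$. Formally this is an induction on $n$, using whatever bracketing convention is fixed for $\alpha_1\wedge\cdots\wedge\alpha_n$. The second equality holds because the $\{\wedge,\vee\}$-reduct of $\A$ is a lattice. In a lattice the binary meet is associative, commutative and idempotent, so an iterated binary meet equals the infimum of the finite set of its arguments. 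Idempotency is what covers repeated formulas among the $\alpha_i$, which collapse to a single element of the set. With this equality the two conditions obtained above coincide, and so (i)$\Leftrightarrow$(ii).

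No step is a genuine obstacle. The only point needing care is this identification of the iterated binary meet with the set-theoretic infimum, including the case of repetitions, and the lattice identities above handle it.
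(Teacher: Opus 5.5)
Your proof is correct and matches the paper's approach. The paper states this corollary without proof as an immediate consequence of Proposition~\ref{P62}, and you supply exactly that derivation: you apply the proposition to $\{\alpha_1,\ldots,\alpha_n\}$ and to the singleton $\{\alpha_1\wedge\cdots\wedge\alpha_n\}$, then identify $h(\alpha_1\wedge\cdots\wedge\alpha_n)$ with the finite infimum because $h$ preserves $\wedge$ and the reduct is a lattice.
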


\begin{prop}\label{P64}
Let $\alpha,\beta\in Fm$ and $\Gamma,\Delta\subseteq Fm$.
The consequence relation $\models_{\K}^{\leqslant}$ satisfies the following properties:
\begin{itemize}
  \item[\rm (C1)] If $\alpha\in\Gamma$, then
  $\Gamma\models_{\K}^{\leqslant}\alpha$.
  \hfill (Reflexivity)

  \item[\rm (C2)] If $\Delta\models_{\K}^{\leqslant}\alpha$ and
  $\Delta\subseteq\Gamma$, then
  $\Gamma\models_{\K}^{\leqslant}\alpha$.
  \hfill (Monotonicity)

  \item[\rm (C3)] If $\Delta\models_{\K}^{\leqslant}\alpha$ and
  $\Gamma\models_{\K}^{\leqslant}\beta$ for every $\beta\in\Delta$, then
  $\Gamma\models_{\K}^{\leqslant}\alpha$.
  \hfill (Cut for sets)

  \item[\rm (C4)] If $\Delta\models_{\K}^{\leqslant}\alpha$ and
  $\Gamma\cup\{\alpha\}\models_{\K}^{\leqslant}\beta$, then
  $\Delta\cup\Gamma\models_{\K}^{\leqslant}\beta$.
  \hfill (Cut for formulas)

  \item[\rm (C5)] If $\Gamma\models_{\K}^{\leqslant}\alpha$, then
  $\varepsilon(\Gamma)\models_{\K}^{\leqslant}\varepsilon(\alpha)$
  for every endomorphism $\varepsilon$ of $\mathfrak{Fm}$.
  \hfill (Structurality)
\end{itemize}
\end{prop}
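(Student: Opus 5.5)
The plan is to check each property directly from Definition~\ref{D61}, using Proposition~\ref{P62} for finite non-empty premise sets. First I would fix a uniform description covering all three clauses: for any $\Gamma$, we have $\Gamma\models_{\K}^{\leqslant}\alpha$ iff either $\emptyset\models_{\K}^{\leqslant}\alpha$, or there is a finite non-empty $\Gamma_0\subseteq\Gamma$ with $\bigwedge h(\Gamma_0)\leq h(\alpha)$ for all $\A\in\K$ and $h$. The only point needing care is that the empty case implies every finite non-empty case, since $h(\alpha)=1\geq\bigwedge h(\Gamma_0)$. Consequently, when $\Gamma$ is infinite, we may always take the witness $\Gamma_0$ to be non-empty.

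\textbf{(C1) and (C2).} For (C1), take $\Gamma_0=\{\alpha\}$; clearly $h(\alpha)\leq h(\alpha)$. For (C2), let $\Delta_0\subseteq\Delta$ be a witness, which is finite and either empty or non-empty. If it is empty, the remark above shows that any finite non-empty subset of $\Gamma$ also works. If $\Gamma$ is empty, then $\Delta$ is empty and there is nothing to prove. If $\Delta_0\neq\emptyset$, then for every finite $\Gamma_0$ with $\Delta_0\subseteq\Gamma_0\subseteq\Gamma$ we get $\bigwedge h(\Gamma_0)\leq\bigwedge h(\Delta_0)\leq h(\alpha)$. This handles both the finite and the infinite cases.

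\textbf{(C3) and (C4).} For (C3), pick a finite witness $\Delta_0\subseteq\Delta$ for $\alpha$. If $\Delta_0=\emptyset$, then $\alpha$ is valid and (C2) gives the result. Otherwise, for each $\beta\in\Delta_0$ pick a finite witness $\Gamma_\beta\subseteq\Gamma$, and set $\Gamma_0=\bigcup_\beta\Gamma_\beta$, which is finite. If $\Gamma_0$ is empty, each $\beta$ is valid, so $\bigwedge h(\Delta_0)=1\leq h(\alpha)$ and $\alpha$ is valid. Otherwise, $\bigwedge h(\Gamma_0)\leq h(\beta)$ for each $\beta$, hence $\bigwedge h(\Gamma_0)\leq\bigwedge h(\Delta_0)\leq h(\alpha)$. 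I would then derive (C4) from (C1)--(C3): every member of $\Gamma\cup\{\alpha\}$ is a consequence of $\Delta\cup\Gamma$, by (C1) for members of $\Gamma$ and by (C2) for $\alpha$. Applying (C3) with $\Gamma\cup\{\alpha\}$ in place of $\Delta$ then gives $\Delta\cup\Gamma\models_{\K}^{\leqslant}\beta$.

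\textbf{(C5).} The key fact is that if $h\in\operatorname{Hom}(\mathfrak{Fm},\A)$ and $\varepsilon$ is an endomorphism of $\mathfrak{Fm}$, then $h\circ\varepsilon\in\operatorname{Hom}(\mathfrak{Fm},\A)$. Take a finite witness $\Gamma_0$ and use $\varepsilon(\Gamma_0)$, which is finite and non-empty iff $\Gamma_0$ is. Then $\bigwedge h(\varepsilon(\Gamma_0))=\bigwedge(h\circ\varepsilon)(\Gamma_0)\leq (h\circ\varepsilon)(\alpha)$, and the empty case is analogous.

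The main obstacle is bookkeeping rather than mathematics: the three-clause definition mixes empty, finite and infinite premise sets, so a witness may be empty while the ambient set is not. The uniform reformulation at the start, together with the observation that validity implies the finite non-empty condition, is what I would establish first so that this issue does not recur in each case.
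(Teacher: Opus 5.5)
Your proposal is correct, and for (C3), the only item the paper actually proves, it follows the paper's argument step by step: the finite witness $\Delta_0$, the empty case, the union $\Gamma_0=\bigcup_{\beta}\Gamma_\beta$, the case $\Gamma_0=\emptyset$, and the chain $\bigwedge h(\Gamma_0)\leq\bigwedge h(\Delta_0)\leq h(\alpha)$. Your remaining steps, namely deriving (C4) from (C1)--(C3) and proving (C5) via $h\circ\varepsilon$ with the uniform reformulation (which also covers the case where $\varepsilon(\Gamma)$ is finite although $\Gamma$ is infinite), correctly supply what the paper leaves to the reader.
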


\begin{proof}
As an example, we prove (C3). Assume that
$\Delta\models_{\K}^{\leqslant}\alpha$ and
$\Gamma\models_{\K}^{\leqslant}\beta$ for every $\beta\in\Delta$.

We may choose a finite set $\Delta_0\subseteq\Delta$ such that
$\Delta_0\models_{\K}^{\leqslant}\alpha$.
Indeed, if $\Delta$ is finite, we take $\Delta_0=\Delta$; otherwise,
its existence follows from Definition~\ref{D61}.

If $\Delta_0=\emptyset$, then
$\emptyset\models_{\K}^{\leqslant}\alpha$, and it follows directly from
Definition~\ref{D61} that $\Gamma\models_{\K}^{\leqslant}\alpha$.

Suppose that $\Delta_0\neq\emptyset$. For each $\beta\in\Delta_0$, we may
choose a finite set $\Gamma_\beta\subseteq\Gamma$ such that
$\Gamma_\beta\models_{\K}^{\leqslant}\beta$.
Indeed, if $\Gamma$ is finite, we take $\Gamma_\beta=\Gamma$; otherwise,
its existence follows from Definition~\ref{D61}.

Let
\[
\Gamma_0=\bigcup_{\beta\in\Delta_0}\Gamma_\beta.
\]

Then $\Gamma_0$ is finite and, by (C2),
$\Gamma_0\models_{\K}^{\leqslant}\beta$ for every $\beta\in\Delta_0$. If $\Gamma_0=\emptyset$, then
$\emptyset\models_{\K}^{\leqslant}\beta$ for every $\beta\in\Delta_0$.
Hence $h(\beta)=1$ for every $\beta\in\Delta_0$, every $\A\in\K$, and
every $h\in \operatorname{Hom}(\mathfrak{Fm},\A)$. Since
$\Delta_0\models_{\K}^{\leqslant}\alpha$, Proposition~\ref{P62} yields
$h(\alpha)=1$. Thus
$\emptyset\models_{\K}^{\leqslant}\alpha$, and consequently
$\Gamma\models_{\K}^{\leqslant}\alpha$.

Suppose now that $\Gamma_0\neq\emptyset$. By Proposition~\ref{P62},
\[
\bigwedge_{\gamma\in\Gamma_0}h(\gamma)
\leq
\bigwedge_{\beta\in\Delta_0}h(\beta)
\leq
h(\alpha)
\]

for every $\A\in\K$ and every $h\in \operatorname{Hom}(\mathfrak{Fm},\A)$.
Hence
$\Gamma_0\models_{\K}^{\leqslant}\alpha$, and since
$\Gamma_0\subseteq\Gamma$, by (C2) we conclude that
$\Gamma\models_{\K}^{\leqslant}\alpha$.
\end{proof}

\begin{cor}
$\mathbf{L}_{\K}^{\leqslant}$ is a finitary sentential logic; that is,
$\models_{\K}^{\leqslant}$ is a finitary structural consequence relation
on $Fm$.
\end{cor}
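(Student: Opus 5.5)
The claim has three parts: $\models_{\K}^{\leqslant}$ is a consequence relation, it is structural, and it is finitary. I will obtain all three from Proposition~\ref{P64} and Definition~\ref{D61}.

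\emph{Consequence relation.} I take the usual Tarskian definition: reflexivity, monotonicity and transitivity (cut for sets). These are exactly (C1), (C2) and (C3) of Proposition~\ref{P64}.

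\emph{Structurality.} This is (C5).

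\emph{Finitarity.} I must show that if $\Gamma\models_{\K}^{\leqslant}\alpha$, then $\Gamma_0\models_{\K}^{\leqslant}\alpha$ for some finite $\Gamma_0\subseteq\Gamma$. I split into cases.
\begin{itemize}
\item If $\Gamma$ is infinite, this is clause (iii) of Definition~\ref{D61}.
\item If $\Gamma$ is finite (including empty), take $\Gamma_0=\Gamma$.
\end{itemize}

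The only point needing care is that (C5) holds uniformly across the three clauses of Definition~\ref{D61}. I take (C5) as already established in Proposition~\ref{P64}, whose proof covered only (C3) as an example. If it had to be checked, the steps would be these:
\begin{itemize}
\item For a homomorphism $h$ and an endomorphism $\varepsilon$, the composite $h\circ\varepsilon$ is again in $\operatorname{Hom}(\mathfrak{Fm},\A)$.
\item Clauses (i) and (ii) then transfer directly to $\varepsilon(\Gamma)$ and $\varepsilon(\alpha)$.
\item For clause (iii), apply the finite case to $\varepsilon(\Gamma_0)$. This set is finite, though possibly smaller than $\Gamma_0$, and by (C2) it still suffices.
\end{itemize}

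I expect no real obstacle here: the statement is a direct repackaging of Proposition~\ref{P64} together with clause (iii) of Definition~\ref{D61}.
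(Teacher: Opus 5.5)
Your proposal is correct and matches the paper, which gives no separate proof of this corollary: it is meant to follow immediately from (C1)--(C5) in Proposition~\ref{P64} together with clause (iii) of Definition~\ref{D61}. Your sketch of (C5) is also sound. In particular, routing the infinite case through $\varepsilon(\Gamma_0)$ and (C2) is the right move, since $\varepsilon$ may collapse an infinite $\Gamma$ into a finite set.
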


In what follows, we consider the logic that preserves degrees of truth
with respect to the class $\Bt$ of tense Boolean algebras. We fix the
propositional signature $\La=\{\wedge,\vee,\neg,\G,\H,\F,\P\}$. For each
$\A\in\Bt$, we denote by $\A_{\La}$ its $\La$-reduct. Thus, valuations are
homomorphisms $h:\mathfrak{Fm}\longrightarrow\A_{\La}$. The
degree-of-truth-preserving logic associated with $\Bt$ is denoted by
\[
\mathbf{L}_{\Bt}^{\leqslant}
=
\langle Fm,\models_{\Bt}^{\leqslant}\rangle.
\]
We extend the consequence relation to finite sets of conclusions as
follows. Let $\Gamma,\Delta\subseteq Fm$ be finite sets. We say that
\emph{$\Delta$ is a consequence of $\Gamma$ in
$\mathbf{L}_{\Bt}^{\leqslant}$}, denoted by
\[
\Gamma\models_{\Bt}^{\leqslant}\Delta,
\]
if, for every $\A\in\Bt$ and every
$h\in\operatorname{Hom}(\mathfrak{Fm},\A_{\La})$,
\[
\bigwedge\{h(\gamma):\gamma\in\Gamma\}
\leq
\bigvee\{h(\delta):\delta\in\Delta\}.
\]
As usual, we adopt the conventions
\[
\bigwedge\emptyset=1
\qquad\text{and}\qquad
\bigvee\emptyset=0.
\]
When the set $\Delta$ of conclusions is a singleton, this extension
recovers the original consequence relation of
$\mathbf{L}_{\Bt}^{\leqslant}$.

\subsection{A Gentzen-style system for $\mathbf{L}_{\Bt}^{\leqslant}$}
We consider a multiple-conclusion sequent calculus with sequents of the form $\Gamma\Rightarrow\Delta,$ where $\Gamma$ and $\Delta$ are finite sets of formulas. For
$\#\in\{\H,\G,\F,\P\}$, we write $\#\Gamma=\{\#\gamma:\gamma\in\Gamma\}.$ Let $\mathfrak{Bt}$ be the sequent calculus given by the following axioms
and rules.

\medskip

\noindent{\bf Axioms}
\[
\alpha\Rightarrow\alpha
\]

\medskip

\noindent{\bf Structural rules}
\[
(we)_l\quad
\frac{\Gamma\Rightarrow\Delta}
     {\Gamma,\alpha\Rightarrow\Delta}
\qquad
(we)_r\quad
\frac{\Gamma\Rightarrow\Delta}
     {\Gamma\Rightarrow\Delta,\alpha}
\qquad
(cut)\quad
\frac{\Gamma\Rightarrow\Delta,\alpha
      \qquad
      \alpha,\Gamma\Rightarrow\Delta}
     {\Gamma\Rightarrow\Delta}
\]

\medskip

\noindent{\bf Logical rules}

\begin{center}
\begin{tabular}{p{0.45\textwidth} p{0.45\textwidth}}

$(\wedge\Rightarrow)\quad
\displaystyle
\frac{\Gamma,\alpha,\beta\Rightarrow\Delta}
     {\Gamma,\alpha\wedge\beta\Rightarrow\Delta}$
&
$(\Rightarrow\wedge)\quad
\displaystyle
\frac{\Gamma\Rightarrow\Delta,\alpha
      \qquad
      \Gamma\Rightarrow\Delta,\beta}
     {\Gamma\Rightarrow\Delta,\alpha\wedge\beta}$
\\[6mm]

$(\vee\Rightarrow)\quad
\displaystyle
\frac{\Gamma,\alpha\Rightarrow\Delta
      \qquad
      \Gamma,\beta\Rightarrow\Delta}
     {\Gamma,\alpha\vee\beta\Rightarrow\Delta}$
&
$(\Rightarrow\vee)\quad
\displaystyle
\frac{\Gamma\Rightarrow\Delta,\alpha,\beta}
     {\Gamma\Rightarrow\Delta,\alpha\vee\beta}$
\\[6mm]

$(\neg\,\Rightarrow)\quad
\displaystyle
\frac{\Gamma\Rightarrow\Delta,\alpha}
     {\Gamma,\neg\alpha\Rightarrow\Delta}$
&
$(\Rightarrow\,\neg)\quad
\displaystyle
\frac{\alpha,\Gamma\Rightarrow\Delta}
     {\Gamma\Rightarrow\neg\alpha,\Delta}$
\\[6mm]

$(\G^{\ast})\quad
\displaystyle
\frac{\Gamma\Rightarrow\Delta,\alpha}
     {\G\Gamma\Rightarrow\F\Delta,\G\alpha}$
&
$(\H^{\ast})\quad
\displaystyle
\frac{\Gamma\Rightarrow\Delta,\alpha}
     {\H\Gamma\Rightarrow\P\Delta,\H\alpha}$
\\[6mm]

$({\ast}\F)\quad
\displaystyle
\frac{\Gamma,\alpha\Rightarrow\Delta}
     {\G\Gamma,\F\alpha\Rightarrow\F\Delta}$
&
$({\ast}\P)\quad
\displaystyle
\frac{\Gamma,\alpha\Rightarrow\Delta}
     {\H\Gamma,\P\alpha\Rightarrow\P\Delta}$
\\[6mm]

$(\P\G)\quad
\displaystyle
\frac{\alpha\Rightarrow\Delta}
     {\P\G\alpha\Rightarrow\Delta}$
&
$(\F\H)\quad
\displaystyle
\frac{\alpha\Rightarrow\Delta}
     {\F\H\alpha\Rightarrow\Delta}$
\\[6mm]

$(\G\P)\quad
\displaystyle
\frac{\Gamma\Rightarrow\alpha}
     {\Gamma\Rightarrow\G\P\alpha}$
&
$(\H\F)\quad
\displaystyle
\frac{\Gamma\Rightarrow\alpha}
     {\Gamma\Rightarrow\H\F\alpha}$

\end{tabular}
\end{center}

A $\mathfrak{Bt}$-proof of a sequent $\Gamma\Rightarrow\Delta$ is a finite
tree whose root is $\Gamma\Rightarrow\Delta$, whose leaves are axioms of
$\mathfrak{Bt}$, and whose internal nodes are obtained by applications
of the rules of $\mathfrak{Bt}$. We write
$\mathfrak{Bt}\vdash\Gamma\Rightarrow\Delta$ if there exists a
$\mathfrak{Bt}$-proof of $\Gamma\Rightarrow\Delta$.
In this case, we say that $\Gamma\Rightarrow\Delta$ is provable in
$\mathfrak{Bt}$. We say that a formula $\alpha$ is provable in
$\mathfrak{Bt}$ if the sequent $\Rightarrow\alpha$ is provable in
$\mathfrak{Bt}$. We write $\Gamma\Leftrightarrow\Delta$ if both
$\Gamma\Rightarrow\Delta$ and $\Delta\Rightarrow\Gamma$ are provable
in $\mathfrak{Bt}$.

A sequent rule is said to be derivable in $\mathfrak{Bt}$ if, whenever
all its premises are provable in $\mathfrak{Bt}$, so is its conclusion.
A double line indicates that the corresponding rules are derivable in
both directions.

\begin{prop}\label{P66}
The following rules are derivable in $\mathfrak{Bt}$.
\end{prop}

\begin{multicols}{2}
\begin{itemize}

  \item[] $(CUT)\,\,\,\,
  \displaystyle
  \frac{\Gamma\Rightarrow\Delta,\alpha
  \qquad
  \alpha,\Sigma\Rightarrow\Pi}
  {\Gamma,\Sigma\Rightarrow\Delta,\Pi}$

  \item[] $(m\G)\,\,\,\,
  \displaystyle
  \frac{\Gamma\Rightarrow\alpha}
  {\G\Gamma\Rightarrow\G\alpha}$

  \item[] $(m\H)\,\,\,\,
  \displaystyle
  \frac{\Gamma\Rightarrow\alpha}
  {\H\Gamma\Rightarrow\H\alpha}$

  \item[] $(m\F)\,\,\,\,
  \displaystyle
  \frac{\alpha\Rightarrow\Delta}
  {\F\alpha\Rightarrow\F\Delta}$

  \item[] $(m\P)\,\,\,\,
  \displaystyle
  \frac{\alpha\Rightarrow\Delta}
  {\P\alpha\Rightarrow\P\Delta}$

  \item[] $(\G)\,\,\,\,\,\,\,\,\,\,\,\,
  \displaystyle
  \frac{\Rightarrow\alpha}
  {\Rightarrow\G\alpha}$

  \item[] $(\H)\,\,\,\,\,\,\,\,\,\,\,\,
  \displaystyle
  \frac{\Rightarrow\alpha}
  {\Rightarrow\H\alpha}$

  \item[] $(Ad_{_{\P\G}})\,\,\,
  \begin{array}{c}
  \P\alpha\Rightarrow\beta\\
  \hline
  \hline
  \alpha\Rightarrow\G\beta
  \end{array}$

  \item[] $(Ad_{_{\F\H}})\,\,\,
  \begin{array}{c}
  \F\alpha\Rightarrow\beta\\
  \hline
  \hline
  \alpha\Rightarrow\H\beta
  \end{array}$

\end{itemize}
\end{multicols}

\begin{proof}
The rule $(CUT)$ follows from $(cut)$ and the weakening rules. 
The rules $(m\G)$ and $(m\H)$ follow from $(\G^{\ast})$ and
$(\H^{\ast})$, respectively, by taking $\Delta=\emptyset$. Similarly,
$(m\F)$ and $(m\P)$ follow from $({\ast}\F)$ and $({\ast}\P)$,
respectively, by taking $\Gamma=\emptyset$. The rules $(\G)$ and $(\H)$
are obtained from $(m\G)$ and $(m\H)$, respectively, by taking
$\Gamma=\emptyset$.

It remains to consider the adjunction rules. We prove
$(Ad_{_{\P\G}})$; the proof of $(Ad_{_{\F\H}})$ is analogous.
First, we show that
$\displaystyle
\frac{\P\alpha\Rightarrow\beta}
{\alpha\Rightarrow\G\beta}$
is derivable.

\begin{prooftree}
\AxiomC{$\alpha\Rightarrow\alpha$}
\LeftLabel{\small{$(\G\P)$}}
\UnaryInfC{$\alpha\Rightarrow\G\P\alpha$}
\AxiomC{$\P\alpha\Rightarrow\beta$}
\LeftLabel{\small{$(m\G)$}}
\UnaryInfC{$\G\P\alpha\Rightarrow\G\beta$}
\LeftLabel{\small{$(cut)$}}
\BinaryInfC{$\alpha\Rightarrow\G\beta$}
\end{prooftree}

Conversely, we show that
$\displaystyle
\frac{\alpha\Rightarrow\G\beta}
{\P\alpha\Rightarrow\beta}$
is derivable.

\begin{prooftree}
\AxiomC{$\alpha\Rightarrow\G\beta$}
\LeftLabel{\small{$(m\P)$}}
\UnaryInfC{$\P\alpha\Rightarrow\P\G\beta$}
\AxiomC{$\beta\Rightarrow\beta$}
\LeftLabel{\small{$(\P\G)$}}
\UnaryInfC{$\P\G\beta\Rightarrow\beta$}
\LeftLabel{\small{$(cut)$}}
\BinaryInfC{$\P\alpha\Rightarrow\beta$}
\end{prooftree}
\end{proof}

\begin{prop}\label{P67}
The following sequents are provable in $\mathfrak{Bt}$.
\begin{enumerate}
  \item[\textbf{(1)}]
  $\F\neg\alpha\Leftrightarrow\neg\G\alpha$ and
  $\P\neg\alpha\Leftrightarrow\neg\H\alpha$;

  \item[\textbf{(2)}]
  $\G(\alpha\wedge\beta)\Leftrightarrow
  \G\alpha\wedge\G\beta$ and
  $\H(\alpha\wedge\beta)\Leftrightarrow
  \H\alpha\wedge\H\beta$;

  \item[\textbf{(3)}]
  $\alpha\Rightarrow\G\P\alpha$ and
  $\alpha\Rightarrow\H\F\alpha$;

  \item[\textbf{(4)}] $\F\alpha\Leftrightarrow\neg\G\neg\alpha$ and $\P\alpha\Leftrightarrow\neg\H\neg\alpha$;

  \item[\textbf{(5)}] $\alpha\Rightarrow\G\neg\H\neg\alpha$ and
  $\alpha\Rightarrow\H\neg\G\neg\alpha$;

  \item[\textbf{(6)}]$\G(\alpha\vee\beta)\Rightarrow\F\alpha\vee \G\beta$ and
  $\H(\alpha\vee\beta)\Rightarrow\P\alpha\vee \H\beta$;

  \item[\textbf{(7)}]
  $\G(\neg\alpha\vee\beta)\Rightarrow\neg\G\alpha\vee \G\beta$ and
  $\H(\neg\alpha\vee\beta)\Rightarrow\neg\H\alpha\vee \H\beta$.
\end{enumerate}
\end{prop}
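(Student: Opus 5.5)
We derive each sequent directly in $\mathfrak{Bt}$, using the Boolean rules together with the modal rules $(\G^{\ast})$, $(\H^{\ast})$, $({\ast}\F)$, $({\ast}\P)$, $(\G\P)$, $(\H\F)$ and the derived rules of Proposition~\ref{P66}. By the evident past/future symmetry of the calculus, we write out only the $\G/\F$ half of each item; the $\H/\P$ half is the same derivation with $\G,\F$ replaced by $\H,\P$ and the corresponding rules.

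\textbf{(1).} For $\F\neg\alpha\Rightarrow\neg\G\alpha$, we start from $\alpha\Rightarrow\alpha$ and apply $(\neg\Rightarrow)$ to get $\alpha,\neg\alpha\Rightarrow$. Then $({\ast}\F)$ with $\Gamma=\{\alpha\}$, $\Delta=\emptyset$ gives $\G\alpha,\F\neg\alpha\Rightarrow$, and $(\Rightarrow\neg)$ gives $\F\neg\alpha\Rightarrow\neg\G\alpha$. For the converse, from $\alpha\Rightarrow\alpha$ the rule $(\Rightarrow\neg)$ gives $\Rightarrow\neg\alpha,\alpha$. Applying $(\G^{\ast})$ with $\Gamma=\emptyset$, $\Delta=\{\neg\alpha\}$ and principal formula $\alpha$ yields $\Rightarrow\F\neg\alpha,\G\alpha$. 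Finally $(\neg\Rightarrow)$ gives $\neg\G\alpha\Rightarrow\F\neg\alpha$.

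\textbf{(2) and (3).} For (2), from $\alpha,\beta\Rightarrow\alpha$ (axiom plus weakening), $(\wedge\Rightarrow)$ and $(m\G)$ give $\G(\alpha\wedge\beta)\Rightarrow\G\alpha$; similarly for $\beta$, and $(\Rightarrow\wedge)$ closes the left-to-right direction. For the converse, from $\alpha,\beta\Rightarrow\alpha\wedge\beta$ the rule $(m\G)$ with $\Gamma=\{\alpha,\beta\}$ gives $\G\alpha,\G\beta\Rightarrow\G(\alpha\wedge\beta)$, and $(\wedge\Rightarrow)$ finishes. Item (3) is immediate from the axiom $\alpha\Rightarrow\alpha$ by $(\G\P)$ and $(\H\F)$.

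\textbf{(4)--(7).} For (4), from $\alpha\Leftrightarrow\neg\neg\alpha$ (classical rules) and $(m\F)$ we get $\F\alpha\Leftrightarrow\F\neg\neg\alpha$. Item (1) applied to $\neg\alpha$ gives $\F\neg\neg\alpha\Leftrightarrow\neg\G\neg\alpha$, and $(CUT)$ chains the two. For (5), we use (3) together with (4) for the past, $\P\alpha\Rightarrow\neg\H\neg\alpha$. Applying $(m\G)$ to the latter gives $\G\P\alpha\Rightarrow\G\neg\H\neg\alpha$, and we cut with $\alpha\Rightarrow\G\P\alpha$. For (6), from $\alpha\vee\beta\Rightarrow\alpha,\beta$ (by $(\vee\Rightarrow)$ on axioms with weakening), $(\G^{\ast})$ with $\Delta=\{\alpha\}$ and principal formula $\beta$ yields $\G(\alpha\vee\beta)\Rightarrow\F\alpha,\G\beta$, and $(\Rightarrow\vee)$ closes. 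For (7), we instantiate (6) with $\neg\alpha$ to get $\G(\neg\alpha\vee\beta)\Rightarrow\F\neg\alpha\vee\G\beta$. By (1) and $(\vee\Rightarrow)/(\Rightarrow\vee)$ we have $\F\neg\alpha\vee\G\beta\Rightarrow\neg\G\alpha\vee\G\beta$, and we cut.

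The only point requiring care is getting the instances of $(\G^{\ast})$ and $({\ast}\F)$ right, namely which formula plays the role of the principal $\alpha$ and which side-sets are empty, in the derivation of (1). Everything else reduces to classical manipulation and $(CUT)$.
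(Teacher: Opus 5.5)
Your proposal is correct and follows essentially the same route as the paper. The derivations for (1)--(3) coincide with the paper's, and (5)--(7) are obtained from earlier items via $(m\G)$, $(\G^{\ast})$, the classical rules and $(CUT)$ exactly as there. The only minor deviation is (4): the paper handles it by a direct derivation analogous to (1), whereas you reduce it to (1) applied to $\neg\alpha$ together with $\alpha\Leftrightarrow\neg\neg\alpha$ and $(m\F)$; both work.
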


\begin{proof}

\noindent\textbf{(1)} We prove $\F\neg\alpha\Leftrightarrow\neg\G\alpha.$

\begin{center}
\begin{minipage}{0.43\textwidth}
\begin{prooftree}
\AxiomC{$\alpha\Rightarrow\alpha$}
\LeftLabel{\small{$(\neg\Rightarrow)$}}
\UnaryInfC{$\alpha,\neg\alpha\Rightarrow$}
\LeftLabel{\small{$({\ast}\F)$}}
\UnaryInfC{$\G\alpha,\F\neg\alpha\Rightarrow$}
\LeftLabel{\small{$(\Rightarrow\neg)$}}
\UnaryInfC{$\F\neg\alpha\Rightarrow\neg\G\alpha$}
\end{prooftree}
\end{minipage}
\begin{minipage}{0.43\textwidth}
\begin{prooftree}
\AxiomC{$\alpha\Rightarrow\alpha$}
\LeftLabel{\small{$(\Rightarrow\neg)$}}
\UnaryInfC{$\Rightarrow\neg\alpha,\alpha$}
\LeftLabel{\small{$(\G^{\ast})$}}
\UnaryInfC{$\Rightarrow\F\neg\alpha,\G\alpha$}
\LeftLabel{\small{$(\neg\Rightarrow)$}}
\UnaryInfC{$\neg\G\alpha\Rightarrow\F\neg\alpha$}
\end{prooftree}
\end{minipage}
\end{center}

The proof of $\P\neg\alpha\Leftrightarrow\neg\H\alpha$ is analogous. 

\medskip

\noindent\textbf{(2)} We prove $\G(\alpha\wedge\beta)\Leftrightarrow \G\alpha\wedge\G\beta.$

\begin{prooftree}
\AxiomC{$\alpha\Rightarrow\alpha$}
\LeftLabel{\small{$(we)_l$}}
\UnaryInfC{$\alpha,\beta\Rightarrow\alpha$}
\LeftLabel{\small{$(\wedge\Rightarrow)$}}
\UnaryInfC{$\alpha\wedge\beta\Rightarrow\alpha$}
\LeftLabel{\small{$(m\G)$}}
\UnaryInfC{$\G(\alpha\wedge\beta)\Rightarrow \G\alpha$}

\AxiomC{$\beta\Rightarrow\beta$}
\LeftLabel{\small{$(we)_l$}}
\UnaryInfC{$\alpha,\beta\Rightarrow\beta$}
\LeftLabel{\small{$(\wedge\Rightarrow)$}}
\UnaryInfC{$\alpha\wedge\beta\Rightarrow\beta$}
\LeftLabel{\small{$(m\G)$}}
\UnaryInfC{$\G(\alpha\wedge\beta)\Rightarrow \G\beta$}

\LeftLabel{\small{$(\Rightarrow\wedge)$}}
\BinaryInfC{$\G(\alpha\wedge\beta)\Rightarrow
\G\alpha\wedge\G\beta$}
\end{prooftree}

Conversely,

\begin{prooftree}
\AxiomC{$\alpha\Rightarrow\alpha$}
\LeftLabel{\small{$(we)_l$}}
\UnaryInfC{$\alpha,\beta\Rightarrow\alpha$}

\AxiomC{$\beta\Rightarrow\beta$}
\LeftLabel{\small{$(we)_l$}}
\UnaryInfC{$\alpha,\beta\Rightarrow\beta$}

\LeftLabel{\small{$(\Rightarrow\wedge)$}}
\BinaryInfC{$\alpha,\beta\Rightarrow\alpha\wedge\beta$}
\LeftLabel{\small{$(m\G)$}}
\UnaryInfC{$\G\alpha,\G\beta\Rightarrow
\G(\alpha\wedge\beta)$}
\LeftLabel{\small{$(\wedge\Rightarrow)$}}
\UnaryInfC{$\G\alpha\wedge\G\beta\Rightarrow
\G(\alpha\wedge\beta)$}
\end{prooftree}

The proof for $\H$ is analogous.

\medskip

\noindent\textbf{(3)} The sequents in (3) follow immediately from the
identity axiom by applying $(\G\P)$ and $(\H\F)$, respectively.

\medskip

\medskip

\noindent\textbf{(4)} The proof is similar to that of \textbf{(1)}.

\medskip

\noindent\textbf{(5)} The sequents in \textbf{(5)} follow from
\textbf{(3)} and \textbf{(4)} by applying $(m\G)$, $(m\H)$, and
$(CUT)$, respectively.

\medskip

\noindent\textbf{(6)} The sequents in \textbf{(6)} follow directly
from $(\G^{\ast})$ and $(\H^{\ast})$, respectively, together with the
classical rules.

\medskip

\noindent\textbf{(7)} The sequents in \textbf{(7)} follow from
\textbf{(6)} and \textbf{(1)}, together with the classical rules and
$(CUT)$.

\end{proof}

\subsection{Soundness and completeness}\label{S42}

Let $\Gamma,\Delta\subseteq Fm$ be finite sets. A sequent
$\Gamma\Rightarrow\Delta$ of $\mathfrak{Bt}$ is said to be \emph{valid}
if, for every $\A\in\Bt$ and every
$h\in Hom(\mathfrak{Fm},\A_{\La})$,
\[
\bigwedge\{h(\gamma):\gamma\in\Gamma\}
\leq
\bigvee\{h(\delta):\delta\in\Delta\}.
\]
A sequent rule
\[
\frac{\Gamma_1\Rightarrow\Delta_1\qquad\cdots\qquad
\Gamma_k\Rightarrow\Delta_k}
{\Gamma\Rightarrow\Delta}
\]
is said to \emph{preserve validity} if the validity of all its premises
implies the validity of its conclusion.\\
The following proposition is immediate.

\begin{prop}\label{P68}
Let $\Gamma\Rightarrow\Delta$ be a sequent of $\mathfrak{Bt}$. The
following conditions are equivalent:
\begin{itemize}
  \item[\rm (i)] $\Gamma\Rightarrow\Delta$ is valid;
  \item[\rm (ii)] $\Gamma\models_{\Bt}^{\leqslant}\Delta$.
\end{itemize}
\end{prop}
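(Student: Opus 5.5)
The plan is to compare the two definitions word for word. The paper defines $\Gamma\models_{\Bt}^{\leqslant}\Delta$, for finite sets $\Gamma,\Delta$, by the condition that for every $\A\in\Bt$ and every $h\in\operatorname{Hom}(\mathfrak{Fm},\A_{\La})$,
\[
\bigwedge\{h(\gamma):\gamma\in\Gamma\}\leq\bigvee\{h(\delta):\delta\in\Delta\}.
\]
Validity of the sequent $\Gamma\Rightarrow\Delta$ is defined by exactly the same condition. Both quantify over the same class of algebras and the same valuations, namely homomorphisms into the $\La$-reduct, and both adopt the conventions $\bigwedge\emptyset=1$ and $\bigvee\emptyset=0$. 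So (i) and (ii) are literally the same statement, and each implication follows by unfolding the definitions.

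To make the proof robust, I would add a short check that the extended relation agrees with the original one when $\Delta=\{\alpha\}$, as the paper asserts. I would split this into two cases.

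\textbf{Case $\Gamma$ non-empty.} Proposition~\ref{P62} shows that the condition $\bigwedge h(\Gamma)\leq h(\alpha)$ is equivalent to clause (i) of Definition~\ref{D61}.

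\textbf{Case $\Gamma=\emptyset$.} The condition becomes $1\leq h(\alpha)$, that is, $h(\alpha)=1$. This is exactly clause (ii) of Definition~\ref{D61}.

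Hence, for singleton conclusions, the equivalence also holds with respect to the original consequence relation of $\mathbf{L}_{\Bt}^{\leqslant}$.

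There is no genuine obstacle here. The only point needing care is the edge cases $\Gamma=\emptyset$ and $\Delta=\emptyset$. These are handled uniformly by the stated conventions on empty meets and joins, so no separate argument is needed beyond noting that both definitions use the same conventions.
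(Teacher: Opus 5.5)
Your proof is correct and matches the paper, which gives no argument beyond calling the proposition immediate: the extended relation $\Gamma\models_{\Bt}^{\leqslant}\Delta$ and validity of $\Gamma\Rightarrow\Delta$ are defined by the identical condition. Your additional check of the singleton case against Definition~\ref{D61} and Proposition~\ref{P62} is also sound, though the proposition itself does not require it.
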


\begin{lem}\label{L69}
All the structural and logical rules of $\mathfrak{Bt}$ preserve validity.
\end{lem}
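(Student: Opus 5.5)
We check the rules one at a time. Fix an arbitrary $\A\in\Bt$ and $h\in\operatorname{Hom}(\mathfrak{Fm},\A_{\La})$. For finite $\Gamma$ we write $\gamma=\bigwedge h(\Gamma)$ and $\delta=\bigvee h(\Delta)$. Since the premises are valid for every valuation, they hold in particular for this $h$, and it suffices to derive the inequality for the conclusion.

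\textbf{Structural and Boolean rules.} The axiom $\alpha\Rightarrow\alpha$ is trivially valid. Weakening holds because $\gamma\wedge h(\alpha)\leq\gamma$ and $\delta\leq\delta\vee h(\alpha)$. For $(cut)$, the premises give $\gamma\leq\delta\vee a$ and $\gamma\wedge a\leq\delta$, where $a=h(\alpha)$. Then
\[
\gamma=(\gamma\wedge a)\vee(\gamma\wedge\neg a)\leq\delta\vee(\gamma\wedge\neg a)\leq\delta\vee((\delta\vee a)\wedge\neg a)\leq\delta .
\]
The conjunction and disjunction rules follow from lattice properties and distributivity. The negation rules follow from the residuation law $x\wedge y\leq z$ iff $x\leq\neg y\vee z$ recalled in Section 2.

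\textbf{Modal rules.} For $(\G^{\ast})$, assume $\gamma\leq\delta\vee a$. By (t2) and (t4), $\bigwedge\G h(\Gamma)=\G\gamma\leq\G(\delta\vee a)$; when $\Gamma=\emptyset$ we use (t1) instead. Next, (t8) gives $\G(\delta\vee a)\leq\F\delta\vee\G a$. Finally, (t6) gives $\F\delta=\bigvee\F h(\Delta)$, with (t5) covering $\Delta=\emptyset$.

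For $({\ast}\F)$, assume $\gamma\wedge a\leq\delta$. By (t9), $\G\gamma\wedge\F a\leq\F(\gamma\wedge a)$, and by (t4) and (t6) this is at most $\F\delta=\bigvee\F h(\Delta)$. The empty cases are handled again by (t1) and (t5).

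For $(\P\G)$, assume $a\leq\delta$. Then (t7) and (t4) give $\P\G a\leq a\leq\delta$. For $(\G\P)$, assume $\gamma\leq a$. Then (t3) and (t4) give $\gamma\leq\G\P\gamma\leq\G\P a$. The past rules $(\H^{\ast})$, $({\ast}\P)$, $(\F\H)$ and $(\H\F)$ are handled symmetrically, using the $\H,\P$ halves of the same properties.

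The main obstacle is the bookkeeping in $(\G^{\ast})$ and $({\ast}\F)$. We must pass from the meet of $\G h(\Gamma)$ and the join of $\F h(\Delta)$ to $\G\gamma$ and $\F\delta$. This requires (t2) and (t6) extended by induction to finite meets and joins, together with careful treatment of empty $\Gamma$ or $\Delta$ via (t1) and (t5). Once these identities are in place, every case is a one-line consequence of Proposition~\ref{prop3.1}.
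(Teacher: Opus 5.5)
Your proof is correct and follows essentially the same route as the paper. You use lattice and Boolean reasoning for the structural and classical rules, (t3) and (t7) for $(\G\P)$, $(\H\F)$, $(\P\G)$, $(\F\H)$, and for $(\G^{\ast})$ and $({\ast}\F)$ monotonicity with (t8), respectively (t9), plus (t1), (t2), (t5), (t6) to pass between $\G\bigwedge$ and $\bigwedge\G$, and between $\F\bigvee$ and $\bigvee\F$; your more explicit handling of empty contexts and your use of complements in the cut case (where distributivity alone suffices) are only cosmetic differences.
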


\begin{proof}
That the weakening rules and the cut rule preserve validity is an immediate consequence of the fact that every $\A\in\Bt$ has a distributive lattice
reduct. Similarly, it is straightforward to check that the rules
$(\wedge\Rightarrow)$, $(\Rightarrow\wedge)$, $(\vee\Rightarrow)$ and
$(\Rightarrow\vee)$ preserve validity. The rules $(\neg\Rightarrow)$ and
$(\Rightarrow\neg)$ preserve validity by the Boolean algebra structure
of every $\A\in\Bt$.

By (t3) and (t7), the rules $(\G\P)$, $(\H\F)$, $(\P\G)$ and $(\F\H)$
preserve validity.

We now verify that $(\G^{\ast})$ preserves validity. Suppose that
$\Gamma\Rightarrow\Delta,\alpha$ is valid, and let $\A\in\Bt$ and
$h\in \operatorname{Hom}(\mathfrak{Fm},\A_{\La})$. Then
\[
\bigwedge\{h(\gamma):\gamma\in\Gamma\}
\leq
\bigvee\{h(\delta):\delta\in\Delta\}\vee h(\alpha).
\]
By the monotonicity of $\G$ and (t8),
\[
\G\left(\bigwedge\{h(\gamma):\gamma\in\Gamma\}\right)
\leq
\G h(\alpha)\vee
\F\left(\bigvee\{h(\delta):\delta\in\Delta\}\right).
\]
Using (t1), (t2), (t5) and (t6), we obtain
\[
\bigwedge\{h(\G\gamma):\gamma\in\Gamma\}
\leq
\bigvee\{h(\F\delta):\delta\in\Delta\}\vee h(\G\alpha).
\]
Hence $\G\Gamma\Rightarrow\F\Delta,\G\alpha$ is valid. The proof for
$(\H^{\ast})$ is analogous.

Finally, suppose that $\Gamma,\alpha\Rightarrow\Delta$ is valid. Then,
for every $\A\in\Bt$ and every
$h\in \operatorname{Hom}(\mathfrak{Fm},\A_{\La})$,
\[
\bigwedge\{h(\gamma):\gamma\in\Gamma\}\wedge h(\alpha)
\leq
\bigvee\{h(\delta):\delta\in\Delta\}.
\]
By (t9) and the monotonicity of $\F$,
\[
\G\left(\bigwedge\{h(\gamma):\gamma\in\Gamma\}\right)
\wedge\F h(\alpha)
\leq
\F\left(\bigvee\{h(\delta):\delta\in\Delta\}\right).
\]
Using again (t1), (t2), (t5) and (t6), it follows that
\[
\bigwedge\{h(\G\gamma):\gamma\in\Gamma\}\wedge h(\F\alpha)
\leq
\bigvee\{h(\F\delta):\delta\in\Delta\}.
\]
Therefore $\G\Gamma,\F\alpha\Rightarrow\F\Delta$ is valid, and thus
$({\ast}\F)$ preserves validity. The proof for $({\ast}\P)$ is
analogous.
\end{proof}

\begin{theo}[Soundness]\label{T610}
Every sequent provable in $\mathfrak{Bt}$ is valid.
\end{theo}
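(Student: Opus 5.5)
We argue by induction on the height of a $\mathfrak{Bt}$-proof of $\Gamma\Rightarrow\Delta$, showing that the root sequent of every proof tree is valid in the sense of Subsection~\ref{S42}.

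\textbf{Base case.} The proof consists of a single axiom $\alpha\Rightarrow\alpha$. For every $\A\in\Bt$ and every $h\in\operatorname{Hom}(\mathfrak{Fm},\A_{\La})$ we have $h(\alpha)\leq h(\alpha)$ trivially, so every axiom is valid.

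\textbf{Inductive step.} The last inference of the proof is an application of some structural or logical rule of $\mathfrak{Bt}$. Its premises are roots of proper subtrees of smaller height, so they are valid by the induction hypothesis. Lemma~\ref{L69} states that every rule of $\mathfrak{Bt}$ preserves validity, hence the conclusion is valid. The rules $(we)_l$, $(we)_r$, $(cut)$, the propositional rules, $(\G^{\ast})$, $(\H^{\ast})$, $({\ast}\F)$, $({\ast}\P)$, $(\P\G)$, $(\F\H)$, $(\G\P)$ and $(\H\F)$ are exactly the cases covered by that lemma. No separate argument is needed for the derived rules of Proposition~\ref{P66}, since they are not primitive.

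\textbf{Where the difficulty lies.} The induction itself is routine, so the only real obstacle is the correctness of Lemma~\ref{L69}, in particular the cases that the lemma only sketches. I would spot-check two points, which can be made rigorous directly.

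\begin{itemize}
\item[(a)] The rules $(\P\G)$ and $(\G\P)$ rely on (t7) and (t3). For $(\G\P)$: from $\bigwedge h(\Gamma)\leq h(\alpha)$ and $h(\alpha)\leq \G\P h(\alpha)$ we obtain $\bigwedge h(\Gamma)\leq h(\G\P\alpha)$. For $(\P\G)$: from $h(\alpha)\leq\bigvee h(\Delta)$ and $\P\G h(\alpha)\leq h(\alpha)$ we obtain $h(\P\G\alpha)\leq\bigvee h(\Delta)$.
\item[(b)] The empty-set conventions $\bigwedge\emptyset=1$ and $\bigvee\emptyset=0$ must be compatible with the uses of (t1) and (t5) in the $(\G^{\ast})$ and $({\ast}\F)$ cases. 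When $\Gamma=\emptyset$ we need $\G1=1$, and when $\Delta=\emptyset$ we need $\F0=0$; these are exactly (t1) and (t5).
\end{itemize}

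With these checks in place, the induction concludes that every provable sequent is valid.
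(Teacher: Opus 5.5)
Your proof is correct and takes the same approach as the paper, which proves soundness in one line from the validity of the axiom and Lemma~\ref{L69}; your induction on proof height is that argument written out. Your spot checks are also accurate: the $(\G\P)$ and $(\P\G)$ cases follow from (t3) and (t7), and the empty-context conventions are handled by (t1) and (t5).
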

\begin{proof}
It follows from the validity of the axiom of $\mathfrak{Bt}$ and
Lemma~\ref{L69}.
\end{proof}

As an immediate consequence of the Soundness Theorem,
the empty sequent is not provable in $\mathfrak{Bt}$, since it is not
valid in the two-element tense Boolean algebra.

Next, we prove the Completeness Theorem by means of the
Lindenbaum--Tarski algebra. Consider the binary relations $\prec$ and
$\equiv$ on $Fm$ defined by
\[
\alpha\prec\beta
\quad\text{if and only if}\quad
\mathfrak{Bt}\vdash\alpha\Rightarrow\beta,
\]
and
\[
\alpha\equiv\beta
\quad\text{if and only if}\quad
\alpha\prec\beta \text{ and } \beta\prec\alpha.
\]

By the identity axiom and the structural rules, $\prec$ is a preorder,
and hence $\equiv$ is an equivalence relation. Using the rules
$(\wedge\Rightarrow)$, $(\Rightarrow\wedge)$,
$(\vee\Rightarrow)$, $(\Rightarrow\vee)$,
$(\neg\Rightarrow)$ and $(\Rightarrow\neg)$, it is straightforward to
verify that $\equiv$ is compatible with $\wedge$, $\vee$ and $\neg$.
The compatibility with $\G$, $\H$, $\F$ and $\P$ follows from the
derivable rules $(m\G)$, $(m\H)$, $(m\F)$ and $(m\P)$, respectively.
Hence, $\equiv$ is a congruence on $\mathfrak{Fm}$.

For each $\alpha\in Fm$, we denote by $[\alpha]_{\equiv}$ the
equivalence class of $\alpha$ modulo $\equiv$. The order on
$Fm/{\equiv}$ induced by $\prec$ is given by
\[
[\alpha]_{\equiv}\leq[\beta]_{\equiv}
\quad\text{if and only if}\quad
\mathfrak{Bt}\vdash\alpha\Rightarrow\beta.
\]
We define
\[
\mathbf{1}
=
\{\alpha\in Fm:\mathfrak{Bt}\vdash\,\Rightarrow\alpha\}.
\]

\begin{lem}\label{L610}
The set $\mathbf{1}$ is an equivalence class of $\equiv$. Moreover, if
$\mathbf{0}=\neg\mathbf{1}$, then, for every $\alpha\in Fm$,
\[
\mathbf{0}\leq[\alpha]_{\equiv}\leq\mathbf{1}.
\]
\end{lem}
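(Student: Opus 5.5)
The plan is to show first that $\mathbf{1}$ is nonempty and closed under $\equiv$. Then I show that any two of its elements are $\equiv$-equivalent, so that $\mathbf{1}=[\alpha]_{\equiv}$ for any $\alpha\in\mathbf{1}$. After that I establish the bounds.

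\textbf{Nonemptiness.} The formula $\alpha\vee\neg\alpha$ lies in $\mathbf{1}$. To see this, start from the axiom $\alpha\Rightarrow\alpha$ and apply $(\Rightarrow\neg)$ to get $\Rightarrow\neg\alpha,\alpha$. Then $(\Rightarrow\vee)$ gives $\Rightarrow\alpha\vee\neg\alpha$.

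\textbf{$\mathbf{1}$ is a single class.}
\begin{itemize}
\item Let $\alpha,\beta\in\mathbf{1}$. From $\Rightarrow\beta$ we get $\alpha\Rightarrow\beta$ by $(we)_l$. Symmetrically we get $\beta\Rightarrow\alpha$, so $\alpha\equiv\beta$.
\item Conversely, suppose $\alpha\in\mathbf{1}$ and $\alpha\equiv\beta$. Applying $(CUT)$ from Proposition~\ref{P66} to $\Rightarrow\alpha$ and $\alpha\Rightarrow\beta$ yields $\Rightarrow\beta$, so $\beta\in\mathbf{1}$.
\end{itemize}
Hence $\mathbf{1}$ is exactly one equivalence class.

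\textbf{Upper bound.} For any $\gamma$ and any $\alpha\in\mathbf{1}$, weakening $\Rightarrow\alpha$ gives $\gamma\Rightarrow\alpha$. Thus $[\gamma]_{\equiv}\leq\mathbf{1}$.

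\textbf{Meaning of $\mathbf{0}$.} The set $\mathbf{0}=\neg\mathbf{1}$ should be read as the class $[\neg\alpha]_{\equiv}$ for any $\alpha\in\mathbf{1}$. This is well defined because $\equiv$ is a congruence compatible with $\neg$, as established just before the lemma.

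\textbf{Lower bound.} Fix $\alpha\in\mathbf{1}$ and an arbitrary $\gamma$. Apply $(\neg\Rightarrow)$ to $\Rightarrow\alpha$ to obtain $\neg\alpha\Rightarrow$, and then $(we)_r$ to obtain $\neg\alpha\Rightarrow\gamma$. Hence $\mathbf{0}\leq[\gamma]_{\equiv}$.

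\textbf{Main obstacle.} There is no real difficulty here. The only subtle point is interpreting $\neg\mathbf{1}$ as an equivalence class rather than as the set $\{\neg\alpha:\alpha\in\mathbf{1}\}$. That set is contained in the class $[\neg\alpha]_{\equiv}$ but is not equal to it, so it is the congruence property that makes the notation meaningful.
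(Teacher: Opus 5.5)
Your proof is correct and follows the paper's argument essentially step by step: nonemptiness via an excluded-middle formula, $(we)_l$ for mutual equivalence, weakening plus cut for closure under $\equiv$, and weakening for the bounds. The only minor difference is that you get the lower bound from $\neg\alpha\Rightarrow$ for an arbitrary $\alpha\in\mathbf{1}$, which spares you the easy but unstated check $\neg(v\vee\neg v)\equiv v\wedge\neg v$ that the paper relies on when it writes $\mathbf{0}=[v\wedge\neg v]_{\equiv}$.
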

\begin{proof}
The set $\mathbf{1}$ is nonempty, since, for any propositional variable
$v$, the formula $v\vee\neg v$ is provable in $\mathfrak{Bt}$.

If $\alpha,\beta\in\mathbf{1}$, then
$\Rightarrow\alpha$ and $\Rightarrow\beta$ are provable. By $(we)_l$,
$\alpha\Rightarrow\beta$ and $\beta\Rightarrow\alpha$
are provable, and hence $\alpha\equiv\beta$. Conversely, let
$\alpha\in\mathbf{1}$ and suppose that $\alpha\equiv\beta$. Since
$\alpha\Rightarrow\beta$ and $\Rightarrow\alpha$ are provable, by
$(we)_r$ followed by $(cut)$ we obtain $\Rightarrow\beta$. Thus
$\beta\in\mathbf{1}$. Therefore, $\mathbf{1}$ is an equivalence class
of $\equiv$.

Let $\mathbf{0}=\neg\mathbf{1}$. Since $\equiv$ is compatible with
$\neg$, $\mathbf{0}$ is also an equivalence class. Moreover, for any
propositional variable $v$,
$\mathbf{1}=[v\vee\neg v]_{\equiv}$ and
$\mathbf{0}=[v\wedge\neg v]_{\equiv}$.

For every $\alpha\in Fm$, the sequents
$v\wedge\neg v\Rightarrow\alpha$ and
$\alpha\Rightarrow v\vee\neg v$ are provable in $\mathfrak{Bt}$.
Hence,
$\mathbf{0}\leq[\alpha]_{\equiv}\leq\mathbf{1}$.
\end{proof}
The operations on $Fm/{\equiv}$ are defined in the usual way by
\[
[\alpha]_{\equiv}\wedge[\beta]_{\equiv}
=[\alpha\wedge\beta]_{\equiv},
\qquad
[\alpha]_{\equiv}\vee[\beta]_{\equiv}
=[\alpha\vee\beta]_{\equiv},
\qquad
\neg[\alpha]_{\equiv}=[\neg\alpha]_{\equiv},
\]

and
\[
\#[\alpha]_{\equiv}=[\#\alpha]_{\equiv},
\qquad\text{for }\#\in\{\G,\H,\F,\P\}.
\]

\begin{lem}\label{L611}
The structure $\mathfrak{Fm}_{\equiv}
=
\langle Fm/{\equiv},
\wedge,\vee,\neg,\G,\H,\F,\P,\mathbf{0},\mathbf{1}\rangle$ is a tense Boolean algebra.
\end{lem}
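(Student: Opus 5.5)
The plan is to verify the axioms of a tense Boolean algebra directly on $\mathfrak{Fm}_{\equiv}$, converting each equation or inequality into a provable sequent of $\mathfrak{Bt}$. Throughout I use that $[\alpha]_{\equiv}\leq[\beta]_{\equiv}$ iff $\mathfrak{Bt}\vdash\alpha\Rightarrow\beta$, and that equality of classes amounts to $\alpha\Leftrightarrow\beta$. The operations are well defined because $\equiv$ is a congruence, as established just before Lemma~\ref{L610}.

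\textbf{Boolean reduct.} First I show that $\leq$ is a partial order on $Fm/{\equiv}$ and that it is the lattice order for the given meet and join.

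1. From $(\wedge\Rightarrow)$ with $(we)_l$ I get $\alpha\wedge\beta\Rightarrow\alpha$ and $\alpha\wedge\beta\Rightarrow\beta$. If $\gamma\Rightarrow\alpha$ and $\gamma\Rightarrow\beta$, then $(\Rightarrow\wedge)$ gives $\gamma\Rightarrow\alpha\wedge\beta$. So $[\alpha\wedge\beta]_{\equiv}$ is the infimum.
2. Dually, $(\Rightarrow\vee)$, $(we)_r$ and $(\vee\Rightarrow)$ show that $[\alpha\vee\beta]_{\equiv}$ is the supremum.
3. For distributivity I derive $\alpha\wedge(\beta\vee\gamma)\Rightarrow(\alpha\wedge\beta)\vee(\alpha\wedge\gamma)$. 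Apply $(\wedge\Rightarrow)$ and then $(\vee\Rightarrow)$ on the left, reducing to $\alpha,\beta\Rightarrow(\alpha\wedge\beta)\vee(\alpha\wedge\gamma)$ and its analogue for $\gamma$; each is obtained from $(\Rightarrow\wedge)$, $(\Rightarrow\vee)$ and weakening.
4. For complementation, $\alpha\Rightarrow\alpha$ with $(\neg\Rightarrow)$ gives $\alpha,\neg\alpha\Rightarrow$, hence $\alpha\wedge\neg\alpha\Rightarrow\beta$ for every $\beta$. Similarly $(\Rightarrow\neg)$ gives $\Rightarrow\alpha,\neg\alpha$, hence $\beta\Rightarrow\alpha\vee\neg\alpha$. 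By Lemma~\ref{L610}, these say $[\alpha]_{\equiv}\wedge\neg[\alpha]_{\equiv}=\mathbf{0}$ and $[\alpha]_{\equiv}\vee\neg[\alpha]_{\equiv}=\mathbf{1}$.

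Lemma~\ref{L610} also provides the bounds $\mathbf{0}$ and $\mathbf{1}$.

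\textbf{Tense axioms.} These come almost immediately from Proposition~\ref{P67}.

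1. (t0) is exactly item (1) of Proposition~\ref{P67}.
2. (t2) is item (2).
3. (t3) is item (3), read as inequalities.
4. For (t1), I must show $\G\mathbf{1}=\mathbf{1}$. Take $\alpha\in\mathbf{1}$, so $\Rightarrow\alpha$ is provable. The derived rule $(\G)$ of Proposition~\ref{P66} gives $\Rightarrow\G\alpha$, so $\G\alpha\in\mathbf{1}$ and hence $[\G\alpha]_{\equiv}=\mathbf{1}$. The case of $\H$ is the same, using $(\H)$.

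\textbf{Main obstacle.} The only delicate point is the bookkeeping of the constants. I must check that $\mathbf{0}=\neg\mathbf{1}$ really coincides with the least element, and that the class $\mathbf{1}$, defined as a set of provable formulas, is the top element used in (t1). Both facts are supplied by Lemma~\ref{L610}. Beyond that, the argument is a routine translation of Proposition~\ref{P67} into the quotient algebra.
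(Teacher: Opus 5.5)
Your proposal is correct and follows essentially the same route as the paper. The Boolean reduct comes from the rules for $\wedge$, $\vee$, $\neg$ together with Lemma~\ref{L610}; (t0), (t2), (t3) are read off items (1)--(3) of Proposition~\ref{P67}; and (t1) comes from applying the derived rules $(\G)$ and $(\H)$ to a representative of $\mathbf{1}$. The only difference is that you spell out the Boolean verification (infima, suprema, one distributive inequality, complements), which the paper dismisses as straightforward.
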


\begin{proof}
It is straightforward to verify, using the rules for $\wedge$, $\vee$
and $\neg$, together with Lemma~\ref{L610}, that the reduct $\langle Fm/{\equiv},
\wedge,\vee,\neg,\mathbf{0},\mathbf{1}\rangle$ is a Boolean algebra. It remains to verify the conditions for the tense operators. Conditions
$(t0)$, $(t2)$ and $(t3)$ follow from \textbf{(1)}, \textbf{(2)} and
\textbf{(3)} of Proposition~\ref{P67}, respectively.

To verify $(t1)$, let $[\alpha]_{\equiv}=\mathbf{1}$. Then $\alpha$ is
provable in $\mathfrak{Bt}$. By the derivable rules $(\G)$ and $(\H)$,
the formulas $\G\alpha$ and $\H\alpha$ are also provable in
$\mathfrak{Bt}$. Hence $[\G\alpha]_{\equiv}=\mathbf{1}$ and $[\H\alpha]_{\equiv}=\mathbf{1}$

Therefore, $\G\mathbf{1}
=\G[\alpha]_{\equiv}
=[\G\alpha]_{\equiv}
=\mathbf{1},$ and similarly, $\H\mathbf{1}
=\H[\alpha]_{\equiv}
=[\H\alpha]_{\equiv}
=\mathbf{1}.$
Thus $(t1)$ holds, and consequently
$\mathfrak{Fm}_{\equiv}$ is a tense Boolean algebra.
\end{proof}

\begin{lem}\label{L612}
Let $\Gamma,\Delta\subseteq Fm$ be finite non-empty sets. Then the
following conditions are equivalent:
\begin{itemize}
    \item[\rm{(i)}] $\mathfrak{Bt}\vdash\Gamma\Rightarrow\Delta$;
    \item[\rm{(ii)}] $\mathfrak{Bt}\vdash
    \bigwedge\Gamma\Rightarrow\bigvee\Delta$.
\end{itemize}
\end{lem}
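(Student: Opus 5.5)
I will prove the two directions separately. The main tools are the invertibility of the rules $(\wedge\Rightarrow)$ and $(\Rightarrow\vee)$, obtained via $(CUT)$ from Proposition~\ref{P66}, together with repeated application of the rules themselves. Since $\Gamma$ and $\Delta$ are finite non-empty sets, write $\Gamma=\{\gamma_1,\ldots,\gamma_n\}$ and $\Delta=\{\delta_1,\ldots,\delta_m\}$ with $n,m\geq 1$. Here $\bigwedge\Gamma$ and $\bigvee\Delta$ are fixed bracketings, say right-associated, and the argument does not depend on the choice.

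\emph{(i) $\Rightarrow$ (ii).} Assume $\Gamma\Rightarrow\Delta$ is provable. I will argue by induction on $n$ that $\bigwedge\Gamma\Rightarrow\Delta$ is provable. For $n=1$ there is nothing to do. For $n>1$, apply the induction hypothesis to the last $n-1$ formulas, treating $\gamma_1$ as part of the context. This gives $\gamma_1,\gamma_2\wedge\cdots\wedge\gamma_n\Rightarrow\Delta$, and one application of $(\wedge\Rightarrow)$ yields $\gamma_1\wedge(\gamma_2\wedge\cdots\wedge\gamma_n)\Rightarrow\Delta$. To keep this clean, I will state the induction for sequents of the form $\Sigma,\Gamma\Rightarrow\Delta$ with an arbitrary side context $\Sigma$. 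Dually, repeated use of $(\Rightarrow\vee)$ collapses $\Delta$ into $\bigvee\Delta$. Because sequents are built from sets, duplicates cause no trouble. For instance, if $\gamma_1\wedge\cdots$ happens to coincide with some other formula, the set notation absorbs it.

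\emph{(ii) $\Rightarrow$ (i).} Assume $\bigwedge\Gamma\Rightarrow\bigvee\Delta$ is provable. First I derive $\Gamma\Rightarrow\bigwedge\Gamma$, by axioms, $(we)_l$, and repeated $(\Rightarrow\wedge)$. Next I derive $\bigvee\Delta\Rightarrow\Delta$, by axioms, $(we)_r$, and repeated $(\vee\Rightarrow)$. Two applications of $(CUT)$ from Proposition~\ref{P66} then give $\Gamma\Rightarrow\bigvee\Delta$ and finally $\Gamma\Rightarrow\Delta$.

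There is no real obstacle here. The only delicate point is bookkeeping in the inductions: applying $(\Rightarrow\wedge)$ requires both premises to share the same context, which is exactly why weakening is needed first. The non-emptiness hypothesis ensures that $\bigwedge\Gamma$ and $\bigvee\Delta$ are genuine formulas, since no constants are available in the language.
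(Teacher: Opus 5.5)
Your proof is correct and follows essentially the same route as the paper. You get (i)$\Rightarrow$(ii) by successive applications of $(\wedge\Rightarrow)$ and $(\Rightarrow\vee)$, and (ii)$\Rightarrow$(i) by deriving $\Gamma\Rightarrow\bigwedge\Gamma$ and $\bigvee\Delta\Rightarrow\Delta$ and then applying $(CUT)$ twice; your explicit induction with a side context $\Sigma$ just makes precise the bookkeeping the paper leaves implicit.
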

\begin{proof}
The implication (i)$\Rightarrow$(ii) follows by successive applications
of $(\wedge\Rightarrow)$ and $(\Rightarrow\vee)$. Conversely, suppose that
$\mathfrak{Bt}\vdash\bigwedge\Gamma\Rightarrow\bigvee\Delta.$
Using the identity axiom, $(we)_l$ and
$(\Rightarrow\wedge)$, we obtain
$\mathfrak{Bt}\vdash\Gamma\Rightarrow\bigwedge\Gamma.$
Similarly, using the identity axiom, $(we)_r$ and
$(\vee\Rightarrow)$, we obtain
$\mathfrak{Bt}\vdash\bigvee\Delta\Rightarrow\Delta.$ Then,
\begin{prooftree}
\AxiomC{$\Gamma\Rightarrow\bigwedge\Gamma$}
\AxiomC{$\bigwedge\Gamma\Rightarrow\bigvee\Delta$}
\LeftLabel{\small{$(CUT)$}}
\BinaryInfC{$\Gamma\Rightarrow\bigvee\Delta$}
\AxiomC{$\bigvee\Delta\Rightarrow\Delta$}
\LeftLabel{\small{$(CUT)$}}
\BinaryInfC{$\Gamma\Rightarrow\Delta$}
\end{prooftree}
\end{proof}
We now consider the particular cases in which exactly one of $\Gamma$ and $\Delta$ is empty.

\begin{cor}\label{C613}
Let $\Gamma,\Delta\subseteq Fm$ be finite non-empty sets. Then:
\begin{enumerate}
    \item[\rm (i)]
    $\mathfrak{Bt}\vdash\Gamma\Rightarrow$\,\,
    if and only if\,\,
    $\mathfrak{Bt}\vdash\bigwedge\Gamma\Rightarrow$;

    \item[\rm (ii)]
    $\mathfrak{Bt}\vdash\,\Rightarrow\Delta$\,\,
    if and only if\,\,
    $\mathfrak{Bt}\vdash\,\Rightarrow\bigvee\Delta$.
\end{enumerate}
\end{cor}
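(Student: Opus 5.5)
The plan is to repeat the argument of Lemma~\ref{L612}, replacing the missing side by the empty context. The only tools needed are the conjunction and disjunction rules, the identity axiom, weakening, and the derivable rule $(CUT)$ of Proposition~\ref{P66}.

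For (i), I will treat the two directions separately.
\begin{itemize}
\item[] \emph{Left to right.} Suppose $\mathfrak{Bt}\vdash\Gamma\Rightarrow$. I apply $(\wedge\Rightarrow)$ repeatedly, merging the formulas of $\Gamma$ into $\bigwedge\Gamma$ one at a time. The rule has an arbitrary right-hand side $\Delta$, so it applies with $\Delta=\emptyset$, and this gives $\bigwedge\Gamma\Rightarrow$.
\item[] \emph{Right to left.} Suppose $\mathfrak{Bt}\vdash\bigwedge\Gamma\Rightarrow$. As in the proof of Lemma~\ref{L612}, the identity axiom, $(we)_l$ and $(\Rightarrow\wedge)$ give $\Gamma\Rightarrow\bigwedge\Gamma$. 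An application of $(CUT)$ with $\Delta=\Sigma=\Pi=\emptyset$ then yields $\Gamma\Rightarrow$.
\end{itemize}

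For (ii), the argument is dual.
\begin{itemize}
\item[] \emph{Left to right.} Successive applications of $(\Rightarrow\vee)$ with empty antecedent turn $\Rightarrow\Delta$ into $\Rightarrow\bigvee\Delta$.
\item[] \emph{Right to left.} From the identity axiom, $(we)_r$ and $(\vee\Rightarrow)$ we get $\bigvee\Delta\Rightarrow\Delta$. Cutting this against $\Rightarrow\bigvee\Delta$ by $(CUT)$, with $\Gamma=\emptyset$ and the other contexts chosen appropriately, gives $\Rightarrow\Delta$.
\end{itemize}

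I expect no step to be a genuine obstacle. The one point to check is that all the rules invoked are stated with arbitrary, possibly empty, contexts, so they may be instantiated with an empty side. For $(CUT)$, which is derived from $(cut)$ and weakening, it is worth confirming that the weakenings remain harmless when the added context is empty.

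When $\Gamma$ or $\Delta$ is a singleton, $\bigwedge\Gamma$ or $\bigvee\Delta$ is just that formula and the statement is trivial. In general, sets are finite and there is no contraction rule, but repeated formulas do not arise because sequent sides are sets.
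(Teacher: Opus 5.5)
Your proposal is correct and follows the paper's own proof. The forward directions come from repeated $(\wedge\Rightarrow)$ and $(\Rightarrow\vee)$ with an empty side, and the converses come from deriving $\Gamma\Rightarrow\bigwedge\Gamma$ and $\bigvee\Delta\Rightarrow\Delta$ (identity axiom, weakening, $(\Rightarrow\wedge)$ and $(\vee\Rightarrow)$) followed by one application of $(CUT)$ with empty contexts.
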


\begin{proof}
The left-to-right implications follow by successive applications of
$(\wedge\Rightarrow)$ and $(\Rightarrow\vee)$, respectively. Conversely, using the identity axiom, $(we)_l$ and
$(\Rightarrow\wedge)$, we obtain $\mathfrak{Bt}\vdash\Gamma\Rightarrow\bigwedge\Gamma,$
while, using the identity axiom, $(we)_r$ and
$(\vee\Rightarrow)$, we obtain $\mathfrak{Bt}\vdash\bigvee\Delta\Rightarrow\Delta.$
In each case, the result follows by an application of $(CUT)$.
\end{proof}

\begin{theo}[Completeness]\label{T613}
Every valid sequent of $\mathfrak{Bt}$ is $\mathfrak{Bt}$-provable.
\end{theo}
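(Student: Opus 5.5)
We use the Lindenbaum--Tarski algebra $\mathfrak{Fm}_{\equiv}$ of Lemma~\ref{L611} together with the canonical valuation. Let $\Gamma\Rightarrow\Delta$ be a valid sequent. Since $\mathfrak{Fm}_{\equiv}\in\Bt$, validity holds in particular for the natural projection $h_{\equiv}:\mathfrak{Fm}\to\mathfrak{Fm}_{\equiv}$, $h_{\equiv}(\alpha)=[\alpha]_{\equiv}$. This map is a homomorphism because the operations on $Fm/{\equiv}$ are defined representative-wise and $\equiv$ is a congruence. Hence
\[
\bigwedge\{[\gamma]_{\equiv}:\gamma\in\Gamma\}\leq\bigvee\{[\delta]_{\equiv}:\delta\in\Delta\}
\]
in $\mathfrak{Fm}_{\equiv}$. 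We then split into four cases according to whether $\Gamma$ and $\Delta$ are empty.

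\emph{Case 1: $\Gamma,\Delta$ both non-empty.} The meet and join equal $[\bigwedge\Gamma]_{\equiv}$ and $[\bigvee\Delta]_{\equiv}$, so the displayed inequality says $\mathfrak{Bt}\vdash\bigwedge\Gamma\Rightarrow\bigvee\Delta$ by the definition of the order. Lemma~\ref{L612} then gives $\mathfrak{Bt}\vdash\Gamma\Rightarrow\Delta$.

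\emph{Case 2: $\Gamma=\emptyset$, $\Delta\neq\emptyset$.} The inequality reads $\mathbf{1}\leq[\bigvee\Delta]_{\equiv}$. By Lemma~\ref{L610} we get $[\bigvee\Delta]_{\equiv}=\mathbf{1}$, i.e.\ $\bigvee\Delta\in\mathbf{1}$. So $\Rightarrow\bigvee\Delta$ is provable, and Corollary~\ref{C613}(ii) yields $\Rightarrow\Delta$.

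\emph{Case 3: $\Gamma\neq\emptyset$, $\Delta=\emptyset$.} Here $[\bigwedge\Gamma]_{\equiv}\leq\mathbf{0}=[v\wedge\neg v]_{\equiv}$, so $\bigwedge\Gamma\Rightarrow v\wedge\neg v$ is provable. Separately, $v\wedge\neg v\Rightarrow$ is provable via $(\neg\Rightarrow)$ applied to $v\Rightarrow v$, followed by $(\wedge\Rightarrow)$. By $(CUT)$ we obtain $\bigwedge\Gamma\Rightarrow$, and Corollary~\ref{C613}(i) gives $\Gamma\Rightarrow$.

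\emph{Case 4: $\Gamma=\Delta=\emptyset$.} The inequality would be $\mathbf{1}\leq\mathbf{0}$ in every tense Boolean algebra. This fails in the two-element tense Boolean algebra, as noted after Theorem~\ref{T610}, so the empty sequent is not valid and the case is vacuous.

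The main point requiring care is not any single deduction but the bookkeeping of the degenerate cases with the conventions $\bigwedge\emptyset=1$, $\bigvee\emptyset=0$. One must also confirm that $\mathbf{0}$ and $\mathbf{1}$ in $\mathfrak{Fm}_{\equiv}$ coincide with the lattice bounds; this is Lemma~\ref{L610}. Everything else reduces to the previously established Lemmas~\ref{L611}, \ref{L612} and Corollary~\ref{C613}.
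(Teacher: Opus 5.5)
Your proof is correct and follows essentially the same route as the paper: the canonical valuation $\alpha\mapsto[\alpha]_{\equiv}$ into the Lindenbaum--Tarski algebra $\mathfrak{Fm}_{\equiv}$, with the same four-way case split handled by Lemma~\ref{L612} and Corollary~\ref{C613}; the paper merely phrases it contrapositively. Your explicit $(CUT)$ against $v\wedge\neg v\Rightarrow$ in Case 3 and your appeal to the two-element algebra in Case 4 fill in steps the paper only asserts, namely $\mathfrak{Bt}\vdash\gamma\Rightarrow$ iff $[\gamma]_{\equiv}\leq\mathbf{0}$, and $\mathbf{1}\not\leq\mathbf{0}$ in $\mathfrak{Fm}_{\equiv}$.
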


\begin{proof}
We prove the contrapositive. Let $\Gamma\Rightarrow\Delta$ be a sequent
of $\mathfrak{Bt}$ such that
$\mathfrak{Bt}\not\vdash\Gamma\Rightarrow\Delta$.
Consider the map
$v:Var\longrightarrow Fm/{\equiv}$ defined by
$v(p)=[p]_{\equiv}$. It extends uniquely to a homomorphism
$\overline{v}:\mathfrak{Fm}\longrightarrow
(\mathfrak{Fm}_{\equiv})_{\La}$.
It is straightforward to prove, by induction on the complexity of
$\alpha$, that $\overline{v}(\alpha)=[\alpha]_{\equiv}$ for every $\alpha\in Fm$.

Suppose first that $\Gamma$ and $\Delta$ are non-empty. Let $\gamma=\bigwedge\Gamma$ and $\delta=\bigvee\Delta.$ By Lemma~\ref{L612}, $\mathfrak{Bt}\not\vdash\gamma\Rightarrow\delta,$ and therefore $[\gamma]_{\equiv}\not\leq[\delta]_{\equiv}.$ Hence
\[
\bigwedge\{\overline{v}(\alpha):\alpha\in\Gamma\}
\not\leq
\bigvee\{\overline{v}(\beta):\beta\in\Delta\},
\]
and thus $\Gamma\Rightarrow\Delta$ is not valid.

Suppose now that $\Gamma=\emptyset$ and $\Delta\neq\emptyset$, and let
$\delta=\bigvee\Delta$. By Corollary~\ref{C613}, $\mathfrak{Bt}\not\vdash\,\Rightarrow\delta.$ Hence $[\delta]_{\equiv}\neq\mathbf{1}$. Since $\mathbf{1}$ is the
greatest element of $\mathfrak{Fm}_{\equiv}$,
\[
\mathbf{1}\not\leq[\delta]_{\equiv}
=
\bigvee\{\overline{v}(\beta):\beta\in\Delta\}.
\]
Therefore, $\Rightarrow\Delta$ is not valid.

Suppose that $\Gamma\neq\emptyset$ and $\Delta=\emptyset$, and let
$\gamma=\bigwedge\Gamma$. By Corollary~\ref{C613}, $\mathfrak{Bt}\not\vdash\gamma\Rightarrow.$
Since
\[
\mathfrak{Bt}\vdash\gamma\Rightarrow
\quad\text{if and only if}\quad
[\gamma]_{\equiv}\leq\mathbf{0},
\]

we have
\[
\bigwedge\{\overline{v}(\alpha):\alpha\in\Gamma\}
=
[\gamma]_{\equiv}\not\leq\mathbf{0}.
\]
Therefore, $\Gamma\Rightarrow$ is not valid.
Finally, if $\Gamma=\Delta=\emptyset$, the result follows immediately
from $\mathbf{1}\not\leq\mathbf{0}$ in $\mathfrak{Fm}_{\equiv}$.
Hence, $\Gamma\Rightarrow\Delta$ is not valid.
\end{proof}

From Proposition~\ref{P68} and the Soundness and Completeness Theorems,
we immediately obtain the following result.

\begin{cor}
The following conditions are equivalent:
\begin{itemize}
  \item[\rm{(i)}] $\mathfrak{Bt}\vdash\Gamma\Rightarrow\Delta$;
  \item[\rm{(ii)}] $\Gamma\Rightarrow\Delta$ is valid;
  \item[\rm{(iii)}] $\Gamma\models_{\Bt}^{\leqslant}\Delta$.
\end{itemize}
\end{cor}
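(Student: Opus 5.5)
The corollary is obtained by combining three results already established, so the plan is to chain them in a cycle of implications.

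First, (i)$\Rightarrow$(ii) is exactly the Soundness Theorem (Theorem~\ref{T610}): any $\mathfrak{Bt}$-provable sequent is valid. Second, (ii)$\Rightarrow$(i) is the Completeness Theorem (Theorem~\ref{T613}): any valid sequent is $\mathfrak{Bt}$-provable. Its proof already treats all four cases, according to whether $\Gamma$ and $\Delta$ are empty or not, so no further case analysis is required. Third, (ii)$\Leftrightarrow$(iii) is Proposition~\ref{P68}. That proposition is immediate because the definition of validity of $\Gamma\Rightarrow\Delta$ and the definition of the extended relation $\Gamma\models_{\Bt}^{\leqslant}\Delta$ are literally the same inequality
\[
\bigwedge\{h(\gamma):\gamma\in\Gamma\}\leq\bigvee\{h(\delta):\delta\in\Delta\},
\]
quantified over all $\A\in\Bt$ and all $h\in\operatorname{Hom}(\mathfrak{Fm},\A_{\La})$, with the conventions $\bigwedge\emptyset=1$ and $\bigvee\emptyset=0$.

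The only point requiring care is consistency in the degenerate cases. When $\Delta$ is a singleton $\{\alpha\}$ and $\Gamma=\emptyset$, the extended relation gives $1\leq h(\alpha)$, that is, $h(\alpha)=1$. This matches clause (ii) of Definition~\ref{D61}, so the extended relation really recovers $\models_{\Bt}^{\leqslant}$ there. When $\Gamma$ is nonempty, agreement with the original relation follows from Proposition~\ref{P62}. Since the sequents of $\mathfrak{Bt}$ have finite $\Gamma,\Delta$, clause (iii) of Definition~\ref{D61}, which handles infinite premise sets, never arises.

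There is no real obstacle in this argument. All the substantive work lies in Lemma~\ref{L611}, which shows that the Lindenbaum--Tarski quotient is a tense Boolean algebra, and in the case analysis of Theorem~\ref{T613}, and both are available.
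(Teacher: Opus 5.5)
Your proposal is correct and follows the paper's argument: the paper also obtains the corollary at once from the Soundness Theorem for (i)$\Rightarrow$(ii), the Completeness Theorem for (ii)$\Rightarrow$(i), and Proposition~\ref{P68} for (ii)$\Leftrightarrow$(iii). Your remarks on the degenerate cases are accurate but not needed, since (iii) refers to the extended multiple-conclusion relation, whose defining inequality is the same one that defines validity.
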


Since $K_t$ is sound and complete with respect to tense Boolean algebras,
the Soundness and Completeness Theorems established above yield the
following result.

\begin{theo}
$\mathfrak{Bt}$ is an alternative syntactic presentation of $K_t$.
\end{theo}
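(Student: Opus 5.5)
The statement should be read precisely. For every formula $\alpha$, we have $\vdash_{K_t}\alpha$ if and only if $\mathfrak{Bt}\vdash\,\Rightarrow\alpha$. More generally, for a finite set $\Gamma$ and a formula $\alpha$, the Hilbert-style consequence relation of $K_t$ (with necessitation restricted to theorems) should match provability of $\Gamma\Rightarrow\alpha$ in $\mathfrak{Bt}$. The plan is to route everything through the algebraic semantics, using the completeness theorems already established.

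\textbf{Theorems.} We use the known algebraic completeness of $K_t$ with respect to $\Bt$: $\vdash_{K_t}\alpha$ if and only if $h(\alpha)=1$ for every $\A\in\Bt$ and every $h\in\operatorname{Hom}(\mathfrak{Fm},\A_{\La})$. Here $\rightarrow$ is read as $\neg\varphi\vee\psi$, and the four operators are taken as primitive, with the interdefinability equivalences valid by (t0). On the other side, Theorem~\ref{T610}, Theorem~\ref{T613} and Proposition~\ref{P68} give $\mathfrak{Bt}\vdash\,\Rightarrow\alpha$ if and only if $\emptyset\models_{\Bt}^{\leqslant}\alpha$. By Definition~\ref{D61}(ii), this in turn holds if and only if $h(\alpha)=1$ for all such $h$. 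Chaining these equivalences gives the same theorems in both systems.

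\textbf{Consequence.} For finite $\Gamma=\{\gamma_1,\dots,\gamma_n\}$, we obtain $\Gamma\vdash_{K_t}\alpha$ if and only if $\vdash_{K_t}(\gamma_1\wedge\dots\wedge\gamma_n)\rightarrow\alpha$, by the deduction theorem, which holds since necessitation is applied only to theorems. Algebraically, $h(\neg\bigwedge\Gamma\vee\alpha)=1$ is equivalent to $\bigwedge h(\Gamma)\le h(\alpha)$ by the residuation law of Boolean algebras. Proposition~\ref{P62} identifies the latter with $\Gamma\models_{\Bt}^{\leqslant}\alpha$, hence with $\mathfrak{Bt}\vdash\Gamma\Rightarrow\alpha$. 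So the single-conclusion fragment of $\mathfrak{Bt}$ and $K_t$ define the same finitary consequence relation. Infinite premise sets are handled by finitarity, via Definition~\ref{D61}(iii) and the compactness of $K_t$-derivations.

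\textbf{Direct syntactic check.} As a complementary route that avoids the external completeness result, we could also show directly that each axiom of $K_t$ is provable in $\mathfrak{Bt}$ and that each rule is admissible.
\begin{itemize}
\item The $K$-axioms come from Proposition~\ref{P67}(7).
\item $\varphi\rightarrow\G\P\varphi$ and $\varphi\rightarrow\H\F\varphi$ come from (3), combined with $(\Rightarrow\neg)$ and $(\Rightarrow\vee)$.
\item Necessitation is given by the derivable rules $(\G)$ and $(\H)$.
\item Modus ponens follows from $(cut)$.
\item Classical tautologies are provable by the propositional rules.
\end{itemize}

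\textbf{Main obstacle.} The delicate point is matching the notion of consequence, not the theorems. The degree-preserving relation must be compared with the local, theorem-restricted-necessitation consequence of $K_t$, not with the global one in which $\varphi\vdash\G\varphi$. I would state the theorem at the level of theorems together with the local consequence relation, and note explicitly that $p\Rightarrow\G p$ is not $\mathfrak{Bt}$-provable. This is witnessed by a tense Boolean algebra with $h(p)\not\le\G h(p)$, for example the powerset algebra of a two-point frame with a single edge.
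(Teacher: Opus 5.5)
Your main argument is correct and is the paper's own. The paper obtains the theorem by combining the soundness and completeness of $K_t$ with respect to $\Bt$ with Theorems~\ref{T610} and~\ref{T613} (via Proposition~\ref{P68}). Your extension to finite premise sets is a correct sharpening that the paper does not state, and so is your remark that the match is with the local consequence of $K_t$; your countermodel to $p\Rightarrow\G p$ is right.

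One claim needs correcting: your ``direct syntactic check'' does not avoid the external completeness result. Proving the $K_t$-axioms in $\mathfrak{Bt}$ and the admissibility of modus ponens and necessitation is the paper's Lemma~\ref{Ltrans-prov1}. It only shows that $K_t$-theorems are $\mathfrak{Bt}$-provable. In the semantic route, that direction used only the routine soundness of $K_t$ and Theorem~\ref{T613}.

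The completeness of $K_t$ with respect to $\Bt$ is needed for the converse: that $\mathfrak{Bt}\vdash\,\Rightarrow\alpha$ implies $K_t\vdash\alpha$. Your check says nothing about this direction. To obtain it syntactically you need the harder half of the paper's argument, Proposition~\ref{Ptrans-seq}:
\begin{itemize}
\item translate every nonempty sequent by $\tau$;
\item simulate each $\mathfrak{Bt}$ rule inside $K_t$. For $(cut)$ this goes through a propositional tautology. For $(\G^{\ast})$ it goes through necessitation, the $K$-axiom and $K_t\vdash\G(B\vee C)\to(\F B\vee\G C)$. The rules $({\ast}\F)$, $(\P\G)$ and $(\G\P)$ need similar arguments.
\end{itemize}
Lemma~\ref{Ltrans} then transfers the result across the two signatures.
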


Although the preceding result follows immediately from the algebraic
semantics, it can also be established syntactically. We give a direct
syntactic proof below.

Let $Fm_{\mathfrak{Bt}}$ and $Fm_{K_t}$ denote the sets of formulas of
$\mathfrak{Bt}$ and $K_t$, respectively. We define a translation
$t:Fm_{\mathfrak{Bt}}\to Fm_{K_t}$ recursively as follows:
\[
\begin{aligned}
t(p)&=p,\\
t(\neg\alpha)&=\neg t(\alpha),\\
t(\alpha\vee\beta)&=\neg t(\alpha)\to t(\beta),\\
t(\alpha\wedge\beta)&=\neg\bigl(t(\alpha)\to\neg t(\beta)\bigr),\\
t(\G\alpha)&=\G t(\alpha),\\
t(\H\alpha)&=\H t(\alpha),\\
t(\F\alpha)&=\neg\G\neg t(\alpha),\\
t(\P\alpha)&=\neg\H\neg t(\alpha).
\end{aligned}
\]
In what follows, we use the usual abbreviations in $K_t$
\[
\varphi\vee\psi:=\neg\varphi\to\psi,
\qquad
\varphi\wedge\psi:=\neg(\varphi\to\neg\psi),
\]
and
\[
\F\varphi:=\neg\G\neg\varphi,
\qquad
\P\varphi:=\neg\H\neg\varphi.
\]
With these conventions, the definition of $t$ can equivalently be
written as
\[
t(\alpha\circ\beta)=t(\alpha)\circ t(\beta),
\qquad
\circ\in\{\wedge,\vee\},
\]
and
\[
t(\#\alpha)=\# t(\alpha),
\qquad
\#\in\{\G,\H,\F,\P\}.
\]
Conversely, we define a translation
$s:Fm_{K_t}\to Fm_{\mathfrak{Bt}}$ by
\[
\begin{aligned}
s(p)&=p,\\
s(\neg\alpha)&=\neg s(\alpha),\\
s(\alpha\to\beta)&=\neg s(\alpha)\vee s(\beta),\\
s(\G\alpha)&=\G s(\alpha),\\
s(\H\alpha)&=\H s(\alpha).
\end{aligned}
\]
The translations defined above are mutually inverse up to provable
equivalence, as stated in the following lemma.

\begin{lem}\label{Ltrans}
For every $\alpha\in Fm_{\mathfrak{Bt}}$,
\[
\mathfrak{Bt}\vdash\alpha\Rightarrow s(t(\alpha))
\qquad\text{and}\qquad
\mathfrak{Bt}\vdash s(t(\alpha))\Rightarrow\alpha.
\]
Moreover, for every $\varphi\in Fm_{K_t}$,
\[
K_t\vdash\varphi\to t(s(\varphi))
\qquad\text{and}\qquad
K_t\vdash t(s(\varphi))\to\varphi.
\]
\end{lem}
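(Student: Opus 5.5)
Both parts go by induction on the complexity of the formula, using the fact that each system proves that provable equivalence is a congruence. For $\mathfrak{Bt}$ this was established before Lemma~\ref{L611}: $\equiv$ is compatible with $\wedge,\vee,\neg$ by the logical rules, and with $\G,\H,\F,\P$ by the derivable rules $(m\G)$, $(m\H)$, $(m\F)$, $(m\P)$ of Proposition~\ref{P66}. For $K_t$ we use the replacement-of-equivalents theorem. It follows from the classical tautologies and modus ponens for the Boolean connectives. For $\G$ and $\H$ it follows from necessitation and the K-axioms, which give: if $K_t\vdash\varphi\leftrightarrow\psi$ then $K_t\vdash\G\varphi\leftrightarrow\G\psi$, and likewise for $\H$.

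For the first part, let $\alpha\in Fm_{\mathfrak{Bt}}$ and prove $\alpha\equiv s(t(\alpha))$ by induction on $\alpha$.
\begin{itemize}
\item The cases $p$, $\neg\alpha$, $\G\alpha$ and $\H\alpha$ are immediate: $s(t(\#\alpha))=\#s(t(\alpha))$, so we apply the induction hypothesis and compatibility.
\item For $\vee$: $s(t(\alpha\vee\beta))=\neg\neg s(t(\alpha))\vee s(t(\beta))$. By the induction hypothesis and compatibility this is equivalent to $\neg\neg\alpha\vee\beta$. Then $\neg\neg\alpha\equiv\alpha$ follows from the rules $(\neg\Rightarrow)$ and $(\Rightarrow\neg)$.
\item For $\wedge$: $s(t(\alpha\wedge\beta))=\neg(\neg s(t(\alpha))\vee\neg s(t(\beta)))$. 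We use the De Morgan law, provable with the classical rules, or simply the fact that $\mathfrak{Fm}_{\equiv}$ has a Boolean reduct (Lemma~\ref{L611}).
\item For $\F$: $s(t(\F\alpha))=\neg\G\neg s(t(\alpha))\equiv\neg\G\neg\alpha\equiv\F\alpha$, by Proposition~\ref{P67}\textbf{(4)}. The case $\P$ is the same.
\end{itemize}
Each equivalence class identity in $Fm/{\equiv}$ gives the two required sequents.

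For the second part, let $\varphi\in Fm_{K_t}$ and prove $K_t\vdash\varphi\leftrightarrow t(s(\varphi))$ by induction on $\varphi$.
\begin{itemize}
\item The atomic, $\neg$, $\G$ and $\H$ cases are immediate from the induction hypothesis and replacement.
\item The only nontrivial case is $\to$: $t(s(\varphi\to\psi))=t(\neg s(\varphi)\vee s(\psi))=\neg\neg t(s(\varphi))\to t(s(\psi))$. By the induction hypothesis and replacement this is equivalent to $\neg\neg\varphi\to\psi$, which is equivalent to $\varphi\to\psi$ by a classical tautology.
\end{itemize}
Splitting the biconditional yields both implications.

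The main obstacle is bookkeeping rather than mathematics. On the $K_t$ side we must carefully justify replacement under $\G$ and $\H$. Concretely, from $\vdash\varphi\to\psi$ we apply necessitation to get $\vdash\G(\varphi\to\psi)$, then the K-axiom and modus ponens give $\vdash\G\varphi\to\G\psi$. No tense axiom beyond K and necessitation is needed, since $t$ and $s$ only introduce negations around the primitive $\G$ and $\H$.
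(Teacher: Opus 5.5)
Your proof is correct and follows essentially the same route as the paper: induction on complexity, carrying both directions at once as provable equivalence, with the Boolean cases handled classically, the $\G$/$\H$ cases handled by $(m\G)$, $(m\H)$ on the $\mathfrak{Bt}$ side and by necessitation plus the $K$-axioms on the $K_t$ side. The only cosmetic difference is that for $\F$ and $\P$ you invoke Proposition~\ref{P67}\textbf{(4)} directly, whereas the paper uses \textbf{(1)} together with the negation rules, which amounts to the same thing.
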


\begin{proof}
The proof is by induction on the complexity of the formulas. The
Boolean cases follow from classical logic. For $\mathfrak{Bt}$, the
cases involving $\G$ and $\H$ follow from $(m\G)$ and $(m\H)$,
respectively, while the cases involving $\F$ and $\P$ follow from the
induction hypothesis, Proposition~\ref{P67}\textbf{(1)}, and the rules
for negation. For $K_t$, the cases involving $\G$ and $\H$ follow from
the induction hypothesis, the necessitation rules, and the corresponding
$K$-axioms.
\end{proof}

\begin{lem}\label{Ltrans-prov1}
For every $\varphi\in Fm_{K_t}$, if $K_t\vdash\varphi$, then
$\mathfrak{Bt}\vdash\,\Rightarrow s(\varphi).$
\end{lem}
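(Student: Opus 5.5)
We argue by induction on the length of a $K_t$-derivation of $\varphi$. In each case we show that $\Rightarrow s(\psi)$ is $\mathfrak{Bt}$-provable for every formula $\psi$ occurring in the derivation. The cases split into the axioms of $K_t$ and the three inference rules. Throughout, we use freely that $\mathfrak{Bt}$ contains the full classical sequent calculus (identity, weakening, $(cut)$, and the rules for $\wedge,\vee,\neg$). Consequently, for any classical tautology $\tau$ built from $\neg$ and $\to$, the formula $s(\tau)$ is a tautology in $\neg,\vee$. We obtain $\Rightarrow s(\tau)$ by the standard completeness of classical $\mathbf{LK}$, treating non-Boolean subformulas such as $\G\chi$ as atoms. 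This disposes of the propositional axioms.

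For the tense axioms we first reduce to sequent form. We have $s(\psi\to\chi)=\neg s(\psi)\vee s(\chi)$. Hence, by $(\Rightarrow\vee)$ and $(\Rightarrow\neg)$, and conversely by cut with the identity, proving $\Rightarrow s(\psi\to\chi)$ amounts to proving $s(\psi)\Rightarrow s(\chi)$.
\begin{itemize}
\item The $K$-axiom $\G(\varphi\to\psi)\to(\G\varphi\to\G\psi)$ becomes $\G(\neg a\vee b)\Rightarrow\neg\G a\vee\G b$, where $a=s(\varphi)$ and $b=s(\psi)$. This is exactly Proposition~\ref{P67}\textbf{(7)}. The $\H$ version is analogous.
\item The axiom $\varphi\to\G\P\varphi$ translates to $a\Rightarrow\G\neg\H\neg a$, since $s$ does not introduce $\P$ and $\P$ is only an abbreviation in $K_t$. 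This is Proposition~\ref{P67}\textbf{(5)}.
\item Similarly, $\varphi\to\H\F\varphi$ gives $a\Rightarrow\H\neg\G\neg a$, also covered by \textbf{(5)}.
\end{itemize}
Some care is needed about whether $K_t$'s $\F,\P$ are primitive or abbreviations. Under the paper's conventions they are abbreviations, so $s$ applied to them yields $\neg\G\neg$ and $\neg\H\neg$, as used above.

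For the rules:
\begin{itemize}
\item \emph{Necessitation.} From $\Rightarrow s(\varphi)$ the derivable rules $(\G)$ and $(\H)$ of Proposition~\ref{P66} give $\Rightarrow\G s(\varphi)=s(\G\varphi)$ and likewise $\Rightarrow s(\H\varphi)$.
\item \emph{Modus ponens.} Suppose $\Rightarrow s(\varphi)$ and $\Rightarrow\neg s(\varphi)\vee s(\psi)$ are provable. By the reduction above, the second yields $s(\varphi)\Rightarrow s(\psi)$, and then $(CUT)$ with $\Rightarrow s(\varphi)$ gives $\Rightarrow s(\psi)$.
\end{itemize}

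The only step requiring attention is the classical-tautology case. There the point is that invoking propositional completeness of the Boolean fragment of $\mathfrak{Bt}$ is legitimate because the Boolean rules of $\mathfrak{Bt}$ coincide with those of $\mathbf{LK}$ (with cut, which only helps). Alternatively, one can fix a finite Hilbert axiomatization of classical logic in $\neg,\to$ and check that each axiom schema's translation is provable, which is routine.
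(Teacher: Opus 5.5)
Your proposal is correct and follows the paper's proof essentially step for step. Both argue by induction on the $K_t$-derivation and treat the cases the same way: the classical axioms via the Boolean fragment of $\mathfrak{Bt}$, the $K$-axioms via Proposition~\ref{P67}\textbf{(7)}, the two interaction axioms via Proposition~\ref{P67}\textbf{(5)}, modus ponens via the classical rules and $(CUT)$, and necessitation via the derived rules $(\G)$ and $(\H)$. Your additional remarks only make explicit what the paper compresses into ``the classical rules'': the reduction of $\Rightarrow s(\psi\to\chi)$ to $s(\psi)\Rightarrow s(\chi)$, the reading of $\F,\P$ as abbreviations, and the justification of the tautology case by $\mathbf{LK}$-completeness with tense subformulas treated as atoms.
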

\begin{proof}
The proof is by induction on a proof of $\varphi$ in $K_t$. The translations under $s$ of the classical axioms are provable in the
classical fragment of $\mathfrak{Bt}$. Let us consider the first
$K$-axiom
\[
\G(\alpha\to\beta)\to(\G\alpha\to\G\beta).
\]
Its translation under $s$ is
\[
\neg\G(\neg s(\alpha)\vee s(\beta))
\vee
\bigl(\neg\G s(\alpha)\vee\G s(\beta)\bigr).
\]
By Proposition~\ref{P67}\textbf{(7)},
\[
\mathfrak{Bt}\vdash
\G(\neg s(\alpha)\vee s(\beta))
\Rightarrow
\neg\G s(\alpha)\vee\G s(\beta),
\]
and hence, by the classical rules,
\[
\mathfrak{Bt}\vdash\,
\Rightarrow
\neg\G(\neg s(\alpha)\vee s(\beta))
\vee
\bigl(\neg\G s(\alpha)\vee\G s(\beta)\bigr).
\]
The case of the corresponding axiom for $\H$ is analogous. The
translations of the remaining two tense axioms are provable by
Proposition~\ref{P67}\textbf{(5)}.

Suppose now that $\varphi$ is obtained by modus ponens from $\psi$ and
$\psi\to\varphi$. By the induction hypothesis,
\[
\mathfrak{Bt}\vdash\,\Rightarrow s(\psi)
\qquad\text{and}\qquad
\mathfrak{Bt}\vdash\,\Rightarrow
\neg s(\psi)\vee s(\varphi).
\]
Then $\mathfrak{Bt}\vdash\,\Rightarrow s(\varphi)$ follows from the
classical rules and $(CUT)$.
Finally, if $\varphi$ is obtained by one of the necessitation rules,
the result follows immediately from the induction hypothesis by
applying the corresponding derivable rule $(\G)$ or $(\H)$.
\end{proof}

\begin{lem}\label{Ltrans-prov2}
For every $\alpha\in Fm_{\mathfrak{Bt}}$, if
$\mathfrak{Bt}\vdash\,\Rightarrow\alpha$, then
$K_t\vdash t(\alpha)$.
\end{lem}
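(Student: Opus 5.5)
We prove a stronger statement by induction on the height of a $\mathfrak{Bt}$-proof. The claim is that if $\mathfrak{Bt}\vdash\Gamma\Rightarrow\Delta$, then
\[
K_t\vdash t\Bigl(\bigwedge\Gamma\Bigr)\to t\Bigl(\bigvee\Delta\Bigr),
\]
with the conventions $t(\bigwedge\emptyset)=\top$ and $t(\bigvee\emptyset)=\bot$, where $\top$ and $\bot$ are some fixed classical tautology and its negation. Taking $\Gamma=\emptyset$ and $\Delta=\{\alpha\}$ gives $K_t\vdash\top\to t(\alpha)$, and hence $K_t\vdash t(\alpha)$ by modus ponens. The generalization is needed because proofs of $\Rightarrow\alpha$ pass through arbitrary sequents.

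Write $\hat\Gamma$ for $t(\bigwedge\Gamma)$ and $\check\Delta$ for $t(\bigvee\Delta)$. Since $t$ commutes with all connectives modulo the $K_t$ abbreviations, the classical part is routine. The identity axiom, weakening, cut and the rules for $\wedge,\vee,\neg$ translate into tautological implications among $\hat\Gamma$, $\check\Delta$ and $t(\alpha)$. They are therefore handled by classical tautologies and modus ponens. Since $\Gamma$ and $\Delta$ are sets, reordering conjuncts and disjuncts is also classical.

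For the tense rules we first derive, in $K_t$, the standard auxiliary facts.
\begin{itemize}
\item[(a)] Monotonicity: if $K_t\vdash\varphi\to\psi$, then $K_t\vdash\G\varphi\to\G\psi$, and likewise for $\H,\F,\P$. This follows from necessitation and the $K$-axiom; for $\F$ and $\P$ one applies it to the contrapositive.
\item[(b)] $K_t\vdash\G\varphi_1\wedge\cdots\wedge\G\varphi_n\to\G(\varphi_1\wedge\cdots\wedge\varphi_n)$, with $\G\top$ provable in the case $n=0$.
\item[(c)] $K_t\vdash\G(\varphi\vee\psi)\to(\F\varphi\vee\G\psi)$. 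Unfolding the abbreviations, this is $\G(\neg\varphi\to\psi)\to(\G\neg\varphi\vee\G\psi)$ up to double negation, which follows from the $K$-axiom.
\item[(d)] $K_t\vdash\F(\varphi\vee\psi)\to\F\varphi\vee\F\psi$, and $K_t\vdash\G\varphi\wedge\F\psi\to\F(\varphi\wedge\psi)$.
\item[(e)] $K_t\vdash\P\G\varphi\to\varphi$. From the axiom $\neg\varphi\to\H\F\neg\varphi$, we have $\H\F\neg\varphi\equiv\H\neg\G\varphi$, so $\neg\varphi\to\neg\P\G\varphi$; then contrapose.
\end{itemize}
The mirror images with $\H,\P$ in place of $\G,\F$ are proved the same way. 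All of this mirrors Proposition~\ref{prop3.1}, but each fact must be derived syntactically, using the $K$-axioms, necessitation and the two interaction axioms.

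With (a)--(e), each tense rule of $\mathfrak{Bt}$ follows.
\begin{itemize}
\item $(\G^{\ast})$: from the hypothesis $\hat\Gamma\to\check\Delta\vee t(\alpha)$, apply (a) to get $\G\hat\Gamma\to\G(\check\Delta\vee t(\alpha))$. Then use (b) on the left, and (c) followed by (d) on the right to distribute $\F$ over $\check\Delta$.
\item $({\ast}\F)$: from $\hat\Gamma\wedge t(\alpha)\to\check\Delta$, apply (a) for $\F$ and then (b) and the second part of (d).
\item $(\G\P)$ and $(\H\F)$: use the axioms $\varphi\to\G\P\varphi$ and $\varphi\to\H\F\varphi$ with transitivity.
\item $(\P\G)$ and $(\F\H)$: use (e) and its mirror image.
\end{itemize}
A small point concerns the case $\Delta=\emptyset$. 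There $\F\bot\to\bot$ must be available. It holds because $\F\bot=\neg\G\neg\bot$ and $\G\top$ is provable.

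The main obstacle is (c) and the $\P\G$ law (e). These are the only places where the abbreviations $\F=\neg\G\neg$ and $\P=\neg\H\neg$ interact with the interaction axioms and require some care. The remaining steps are bookkeeping with classical tautologies.
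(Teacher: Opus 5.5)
Your proposal is correct and follows essentially the paper's route. The paper likewise strengthens the claim to sequents (Proposition~\ref{Ptrans-seq}, via the translation $\tau$) and argues by induction on $\mathfrak{Bt}$-derivations, using $K_t$-theorems such as $\G(B\vee C)\to(\F B\vee\G C)$ for the temporal rules. Your uniform $\top$/$\bot$ convention, in place of the paper's three-case definition of $\tau$, is a small simplification: it covers the empty sequent too, so you do not need the paper's remark that the empty sequent is unprovable, a fact the paper obtains from soundness.
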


In order to prove Lemma~\ref{Ltrans-prov2}, we extend the translation
$t$ from formulas to sequents. For every nonempty sequent
$\Gamma\Rightarrow\Delta$, define
\[
\tau(\Gamma\Rightarrow\Delta)=
\begin{cases}
t\left(\bigwedge\Gamma\right)\to
t\left(\bigvee\Delta\right),
& \text{if } \Gamma\neq\emptyset \text{ and } \Delta\neq\emptyset,\\[2mm]
t\left(\bigvee\Delta\right),
& \text{if } \Gamma=\emptyset \text{ and } \Delta\neq\emptyset,\\[2mm]
\neg t\left(\bigwedge\Gamma\right),
& \text{if } \Gamma\neq\emptyset \text{ and } \Delta=\emptyset.
\end{cases}
\]
Notice that $\tau$ need not be defined on the empty sequent
$\Rightarrow$, since it is not provable in $\mathfrak{Bt}$.
The following result provides the key step in the proof of
Lemma~\ref{Ltrans-prov2}.

\begin{prop}\label{Ptrans-seq}
For every nonempty sequent $\Gamma\Rightarrow\Delta$, if $\mathfrak{Bt}\vdash\Gamma\Rightarrow\Delta,$
then $K_t\vdash\tau(\Gamma\Rightarrow\Delta).$
\end{prop}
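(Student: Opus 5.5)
We argue by induction on the height of a $\mathfrak{Bt}$-proof of $\Gamma\Rightarrow\Delta$. To avoid case splits caused by empty sides, we first note that $\tau(\Gamma\Rightarrow\Delta)$ is $K_t$-provably equivalent to $\top\wedge t(\bigwedge\Gamma)\to t(\bigvee\Delta)\vee\bot$, where $\top=p\to p$ and $\bot=\neg\top$. Thus every nonempty sequent can be uniformly read as $t(\bigwedge\Gamma)\to t(\bigvee\Delta)$, with the conventions $\bigwedge\emptyset=\top$ and $\bigvee\emptyset=\bot$. Since finite sets are used, the order and repetition of formulas are irrelevant up to classical equivalence. We also record that $t$ commutes with the connectives in the sense displayed before Lemma~\ref{Ltrans}. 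Hence every step reduces to showing that a certain $K_t$-formula built from $G$, $H$, $F$, $P$ applied to $t$-images is provable.

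For the axiom $\alpha\Rightarrow\alpha$, the translation $t(\alpha)\to t(\alpha)$ is a tautology. Weakening, $(\wedge\Rightarrow)$, $(\Rightarrow\wedge)$, $(\vee\Rightarrow)$, $(\Rightarrow\vee)$ and the two negation rules are handled by propositional tautologies together with modus ponens. For $(cut)$ we use the tautology
\[
(\gamma\to\delta\vee a)\to\bigl((a\wedge\gamma\to\delta)\to(\gamma\to\delta)\bigr).
\]
One caveat applies to cut: the conclusion could be the empty sequent, but that is excluded because the empty sequent is unprovable by Soundness. In any case the uniform reading with $\top$ and $\bot$ gives a formula whether or not a side is empty.

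The modal rules need standard $K_t$ facts, which we derive first as a small toolkit.
\begin{enumerate}
\item[(a)] Monotonicity: from $\vdash\varphi\to\psi$ infer $\vdash\G\varphi\to\G\psi$ and $\vdash\F\varphi\to\F\psi$, and likewise for $\H$ and $\P$. This follows from necessitation and the $K$-axiom, using the dual definition for $\F$ and $\P$.
\item[(b)] $\G$ commutes with finite conjunctions, and $\G\top$ is provable.
\item[(c)] $\F$ distributes over finite disjunctions, and $\neg\F\bot$ is provable.
\item[(d)] $\G(\varphi\vee\psi)\to\F\varphi\vee\G\psi$ and $\G\varphi\wedge\F\psi\to\F(\varphi\wedge\psi)$, together with their $\H/\P$ versions. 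These are just the $K$-axiom rewritten through $\F=\neg\G\neg$.
\item[(e)] $\varphi\to\G\P\varphi$ (an axiom), and $\P\G\varphi\to\varphi$, obtained by contraposing the axiom $\neg\varphi\to\H\F\neg\varphi$ and unfolding the definitions. Symmetrically for $\H\F$ and $\F\H$.
\end{enumerate}
With this toolkit the modal rules follow the soundness computation in Lemma~\ref{L69} line by line. For $(\G^\ast)$, the hypothesis gives $\vdash t(\bigwedge\Gamma)\to t(\bigvee\Delta)\vee t(\alpha)$. By (a), $\G t(\bigwedge\Gamma)\to\G(t(\bigvee\Delta)\vee t(\alpha))$. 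By (d) this implies $\F t(\bigvee\Delta)\vee\G t(\alpha)$, and then (b) and (c) redistribute the operators over the conjunction and disjunction. For $({\ast}\F)$ we use (a), (b), (c) and the second half of (d) in the same way. For $(\G\P)$ and $(\H\F)$ we compose the hypothesis with (e). For $(\P\G)$ and $(\F\H)$ we compose $\P\G t(\alpha)\to t(\alpha)$ with the hypothesis.

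We expect the main obstacle to be bookkeeping rather than mathematics, particularly the empty-side cases. When $\Gamma=\emptyset$ in $(\G^\ast)$, $\G\Gamma$ is empty, so we need $\G\top$, which is available by necessitation. When $\Delta=\emptyset$, $\F\Delta$ is empty, so we need $\neg\F\bot$, i.e.\ $\G\top$ again. The uniform $\top/\bot$ reading absorbs both cases. We will also verify once that $t(\bigwedge\Gamma)$ and $t(\bigvee\Delta)$ do not depend, up to $K_t$-equivalence, on how the sets are enumerated.
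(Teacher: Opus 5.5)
Your proposal is correct and follows essentially the same route as the paper. Both argue by induction on the $\mathfrak{Bt}$-derivation, use classical tautologies (including the same cut tautology) for the structural and Boolean rules, and handle the temporal rules by necessitation plus the $K$-axiom, $\G(B\vee C)\to(\F B\vee\G C)$, and redistribution of $\G$ and $\F$ over finite conjunctions and disjunctions. Your uniform $\top/\bot$ reading of $\tau$ and your explicit toolkit (a)--(e) just make precise the empty-context cases and the remaining temporal rules that the paper leaves as ``treated similarly.''
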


Notice that Lemma~\ref{Ltrans-prov2} follows immediately from
Proposition~\ref{Ptrans-seq} by taking $\Gamma=\emptyset$ and
$\Delta=\{\alpha\}$, since in this case
$\tau(\Rightarrow\alpha)=t(\alpha)$. We now prove
Proposition~\ref{Ptrans-seq}.

\begin{proof}
The proof is by induction on a derivation of
$\Gamma\Rightarrow\Delta$ in $\mathfrak{Bt}$. The identity axiom is
immediate, since $\tau(\alpha\Rightarrow\alpha)
=
t(\alpha)\to t(\alpha)$
and $K_t\vdash t(\alpha)\to t(\alpha)$.
We consider the rule
\[
(cut)\qquad
\frac{\Gamma\Rightarrow\Delta,\alpha
      \qquad
      \alpha,\Gamma\Rightarrow\Delta}
     {\Gamma\Rightarrow\Delta}.
\]
Assume that $\Gamma\neq\emptyset$ and $\Delta\neq\emptyset$. By the
induction hypothesis applied to the premises, we have
\[
K_t\vdash
t\left(\bigwedge\Gamma\right)\to
\left(\neg t\left(\bigvee\Delta\right)\to t(\alpha)\right),
\qquad
K_t\vdash
\neg\left(t(\alpha)\to\neg t\left(\bigwedge\Gamma\right)\right)
\to t\left(\bigvee\Delta\right).
\]
Using the abbreviations for $\vee$ and $\wedge$ in $K_t$, these
formulas can be written as
\[
K_t\vdash
t\left(\bigwedge\Gamma\right)\to
\left(t\left(\bigvee\Delta\right)\vee t(\alpha)\right),
\qquad
K_t\vdash
\left(t(\alpha)\wedge t\left(\bigwedge\Gamma\right)\right)
\to t\left(\bigvee\Delta\right).
\]
Put
\[
A=t\left(\bigwedge\Gamma\right),\qquad
B=t\left(\bigvee\Delta\right),\qquad
C=t(\alpha).
\]
Then
\[
K_t\vdash A\to(B\vee C)
\qquad\text{and}\qquad
K_t\vdash(C\wedge A)\to B.
\]
On the other hand, it is easy to check that the following is a
theorem of $K_t$:
\[
K_t\vdash
(A\to(B\vee C))
\to
\left(
((C\wedge A)\to B)
\to(A\to B)
\right).
\]
Therefore,
\[
K_t\vdash A\to B,
\]
that is,
\[
K_t\vdash\tau(\Gamma\Rightarrow\Delta).
\]
The cases in which one of the contexts is empty are treated
similarly.

The weakening rules and the rules for $\neg$, $\wedge$, and $\vee$
are handled in the same way, using the corresponding theorems of the
classical fragment of $K_t$.

Let us consider now the temporal rule
\[
(\G^{\ast})\qquad
\frac{\Gamma\Rightarrow\Delta,\alpha}
     {\G\Gamma\Rightarrow\F\Delta,\G\alpha}.
\]
Assume that $\Gamma\neq\emptyset$ and $\Delta\neq\emptyset$. By the
induction hypothesis applied to the premise, using the abbreviation
for $\vee$ in $K_t$, we have
\[
K_t\vdash A\to(B\vee C),
\]
where
\[
A=t\left(\bigwedge\Gamma\right),\qquad
B=t\left(\bigvee\Delta\right),\qquad
C=t(\alpha).
\]
By $\G$-necessitation and the $\G$-axiom,
\[
K_t\vdash\G A\to\G(B\vee C).
\]
It is easy to check that the following is a theorem of $K_t$:
\[
K_t\vdash
\G(B\vee C)\to(\F B\vee\G C).
\]
Therefore,
\[
K_t\vdash
\G A\to(\F B\vee\G C).
\]
A straightforward induction on the number of formulas in the
contexts yields
\[
K_t\vdash
\G A\leftrightarrow t\left(\bigwedge\G\Gamma\right)
\qquad\text{and}\qquad
K_t\vdash
\F B\leftrightarrow t\left(\bigvee\F\Delta\right).
\]
Therefore,
\[
K_t\vdash
t\left(\bigwedge\G\Gamma\right)
\to
\left(
t\left(\bigvee\F\Delta\right)\vee t(\G\alpha)
\right),
\]
from which it follows that
\[
K_t\vdash
\tau(\G\Gamma\Rightarrow\F\Delta,\G\alpha).
\]
The cases in which one or both contexts are empty are treated
similarly. The case of $(\H^{\ast})$ is analogous. The remaining temporal rules
$({\ast}\F)$, $({\ast}\P)$, $(\P\G)$, $(\F\H)$, $(\G\P)$, and
$(\H\F)$ are treated similarly, using the corresponding theorems of
$K_t$. This completes the induction.
\end{proof}
To conclude this section, we observe that the decidability of $K_t$,
together with the equivalence established above, immediately yields
the following result.

\begin{theo}
$\mathfrak{Bt}$ is decidable.
\end{theo}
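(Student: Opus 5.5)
The plan is to reduce decidability of $\mathfrak{Bt}$ to decidability of $K_t$, using the translations $t$ and $s$ and the equivalence already established. The decision problem is: given a finite sequent $\Gamma\Rightarrow\Delta$, decide whether $\mathfrak{Bt}\vdash\Gamma\Rightarrow\Delta$.

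First we dispose of the empty sequent. By the remark after Theorem~\ref{T610}, $\Rightarrow$ is not provable, so the procedure answers ``no'' on it.

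For a nonempty sequent, we compute the $K_t$-formula $\tau(\Gamma\Rightarrow\Delta)$. This is effective, since $t$ is defined by a primitive recursion on formulas and $\bigwedge\Gamma$, $\bigvee\Delta$ are finite. The central claim is
\[
\mathfrak{Bt}\vdash\Gamma\Rightarrow\Delta
\quad\text{iff}\quad
K_t\vdash\tau(\Gamma\Rightarrow\Delta).
\]
The left-to-right direction is exactly Proposition~\ref{Ptrans-seq}.

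For the converse, suppose $K_t\vdash\tau(\Gamma\Rightarrow\Delta)$. By Lemma~\ref{Ltrans-prov1}, $\mathfrak{Bt}\vdash\,\Rightarrow s(\tau(\Gamma\Rightarrow\Delta))$. We then unfold this according to the three cases in the definition of $\tau$.
\begin{itemize}
\item If both $\Gamma$ and $\Delta$ are nonempty, then $s(\tau(\Gamma\Rightarrow\Delta))=\neg s(t(\bigwedge\Gamma))\vee s(t(\bigvee\Delta))$. Lemma~\ref{Ltrans} lets us replace $s(t(\cdot))$ by the original formulas via $(CUT)$. Classical rules then give $\bigwedge\Gamma\Rightarrow\bigvee\Delta$, and Lemma~\ref{L612} turns this into $\Gamma\Rightarrow\Delta$.
\item If $\Gamma$ is empty, the same replacement argument gives $\Rightarrow\bigvee\Delta$, and Corollary~\ref{C613} yields $\Rightarrow\Delta$.
\item If $\Delta$ is empty, the same argument gives $\bigwedge\Gamma\Rightarrow$, and Corollary~\ref{C613} yields $\Gamma\Rightarrow$.
\end{itemize}

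Alternatively, the converse can be obtained semantically. By the completeness of $K_t$ with respect to $\Bt$, the formula $\tau(\Gamma\Rightarrow\Delta)$ evaluates to $1$ in every tense Boolean algebra. This makes $\Gamma\Rightarrow\Delta$ valid, and Theorem~\ref{T613} then gives provability.

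Finally, we invoke the known decidability of $K_t$, for instance via the finite model property for Kripke frames, to decide whether $K_t\vdash\tau(\Gamma\Rightarrow\Delta)$. The only step needing any care is the converse direction above, and specifically the bookkeeping with $s\circ t$ in the three cases. This is routine given Lemma~\ref{Ltrans}.
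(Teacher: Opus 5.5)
Your proposal is correct and takes the same route as the paper, which simply notes that the decidability of $K_t$, together with the translation equivalence, gives decidability of $\mathfrak{Bt}$. You make explicit the bookkeeping the paper leaves implicit, namely $\mathfrak{Bt}\vdash\Gamma\Rightarrow\Delta$ iff $K_t\vdash\tau(\Gamma\Rightarrow\Delta)$ via Proposition~\ref{Ptrans-seq}, Lemmas~\ref{Ltrans} and~\ref{Ltrans-prov1}, and Lemma~\ref{L612}/Corollary~\ref{C613}. One small slip: in your semantic alternative, the step from $K_t\vdash\tau(\Gamma\Rightarrow\Delta)$ to validity in every tense Boolean algebra uses soundness, not completeness, of $K_t$ with respect to $\Bt$.
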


\section{Kashima's system}

Let us consider the language $\La$ extended with the following auxiliary symbols: (, ), $^{\P}\!\lbrace$, $^{\F}\!\lbrace$, $\rbrace$, comma.\\
We denote by $Var$ the set of all propositional variables. The notion of {\em formula} is defined as follows:
\begin{itemize}
  \item[1.] If $v$ is a propositional variable, then $v$ and $\neg v$ are formulas.
  \item[2.] If $\alpha$ and $\beta$ are formulas, then $(\alpha \wedge \beta), (\alpha \vee \beta), \H \alpha, \G \alpha, \P \alpha$ and $\F \alpha$ are formulas. 
\end{itemize}
When there is no ambiguity, parentheses will be omitted.

We denote by $\Fs$ the set of all formulas. Note that negation symbol $\neg$ is allowed only in front of propositional variables. We use letters $u, v, \dots$ for propositional variables, when convenient, we use subscripts; and letters $\alpha, \beta,\dots$ for formulas.\\
Kashima's sequents are defined inductively as follows:
\begin{itemize}
  \item[1.] A formula is a sequent.
  \item[2.] If $\Gamma$ is a sequent, then $^\P\!\lbrace \Gamma \rbrace$ and $^\F\!\lbrace \Gamma \rbrace$ are sequents.
  \item[3.] If $n \geq 0$ and each $\Gamma_i$ ($1 \leq i \leq n$) is a sequent, then the sequence $\Gamma_1, \Gamma_2, \dots , \Gamma_n$ is a sequent.
\end{itemize}
In what follows, we use letters $\Gamma, \Delta, \dots$ for sequents. For a sequent $\Gamma$ the {\em meaning formula} of $\Gamma$, $\Gamma ^*$, is defined inductively as follows:
\begin{itemize}
  \item[1.] if $\Gamma = \alpha$ then $\Gamma ^* = \alpha$;
  \item[2.] $(^\P\!\lbrace \Delta \rbrace)^*=\H \Delta ^*$ and $(^\F\!\lbrace \Delta \rbrace)^*=\G \Delta ^*$;
  \item[3.] if $n > 0$, then $(\Gamma_1, \Gamma_2, \dots , \Gamma_n)^*=\Gamma_1^* \vee \Gamma_2^* \vee \dots \vee \Gamma_n^*$. If $n=0$ then $(\,\, \,)^*=(v_0 \wedge \neg v_0)$.
\end{itemize}

Let $v_0$ be a propositional variable and let $\Gamma$ be a sequent in which there is exactly one occurrence of $v_0$ and $v_0$ is not a subformula of any other formula in $\Gamma$. By  $\Gamma [\Delta]$ we denote the sequent obtained from $\Gamma$ 
by replacing $v_0$ by the sequent $\Delta$. 

Let $SK_t$ be the system defined as follows.

\begin{defi}\rm(Kashima's system for $K_t$ \cite{Ka}) Let $SK_t$ be the sequent system consisting of the following:

\

\noindent{\bf Axiom}
$$v, \neg v$$
\noindent{\bf Rules}

\begin{center}
\AxiomC{$\Gamma , \Delta, \Pi, \Sigma$}
\RightLabel{\rm exchange}
\UnaryInfC{$\Gamma, \Pi , \Delta, \Sigma$}
\DisplayProof \hspace{1cm}
\AxiomC{$\Gamma$}
\RightLabel{\rm weakening}
\UnaryInfC{$\Gamma, \Delta$}
\DisplayProof \hspace{1cm}
\AxiomC{$\Gamma, \alpha, \alpha$}
\RightLabel{\rm contraction}
\UnaryInfC{$\Gamma, \alpha$}
\DisplayProof 
\end{center}

\begin{center}
\AxiomC{$\Gamma, ^\F\!\lbrace \Delta \rbrace$}
\RightLabel{\rm turn}
\UnaryInfC{$^\P\!\lbrace \Gamma \rbrace, \Delta$}
\DisplayProof \hspace{1cm}
\AxiomC{$\Gamma, ^\P\!\lbrace \Delta \rbrace$}
\RightLabel{\rm turn}
\UnaryInfC{$^\F\!\lbrace \Gamma \rbrace, \Delta$}
\DisplayProof \end{center}

\begin{center}
\AxiomC{$\Gamma, \alpha$}
\AxiomC{$\Gamma, \beta$}
\RightLabel{$\wedge$}
\BinaryInfC{$\Gamma, (\alpha \wedge \beta)$}
\DisplayProof \hspace{1cm}
\AxiomC{$\Gamma, \alpha, \beta$}
\RightLabel{$\vee$}
\UnaryInfC{$\Gamma, (\alpha \vee \beta)$}
\DisplayProof \end{center}

\begin{center}
\AxiomC{$\Gamma, ^\P\!\lbrace \alpha \rbrace$}
\RightLabel{$\H$}
\UnaryInfC{$\Gamma, \H \alpha$}
\DisplayProof \hspace{1cm}
\AxiomC{$\Gamma, ^\F\!\lbrace \alpha \rbrace$}
\RightLabel{$\G$}
\UnaryInfC{$\Gamma, \G \alpha$}
\DisplayProof \end{center}

\begin{center}
\AxiomC{$\Gamma, ^\P\!\lbrace \Delta , \alpha \rbrace$}
\RightLabel{$\P$}
\UnaryInfC{$\Gamma, ^\P\!\lbrace \Delta \rbrace, \P \alpha$}
\DisplayProof \hspace{1cm}
\AxiomC{$\Gamma, ^\F\!\lbrace \Delta , \alpha \rbrace$}
\RightLabel{$\F$}
\UnaryInfC{$\Gamma, ^\F\!\lbrace \Delta \rbrace, \F \alpha$}
\DisplayProof \end{center}

\begin{prooftree}
\AxiomC{$\Gamma, \alpha$}
\AxiomC{$\Gamma, \neg \alpha$}
\RightLabel{\rm cut}
\BinaryInfC{$\Gamma$}
\end{prooftree}
where $\neg \alpha$ is defined inductively as follows: $\neg \neg v=v, \neg (\alpha \wedge \beta)=(\neg \alpha \vee \neg \beta), \neg (\alpha \vee \beta)=(\neg \alpha \wedge \neg \beta), \break \neg \H \alpha=\P \neg \alpha, \neg \G \alpha = \F \neg \alpha, \neg \P \alpha= \H \neg \alpha$ and $\neg \F \alpha=\G \neg \alpha$.
\end{defi}

The notion of proof in $SK_t$ is the usual. Besides, if there is a proof in $SK_t$ for the sequent $\Gamma$ we write $SK_t\vdash \Gamma$  and we say that $\Gamma$ is $SK_t$-provable. Also, we say that the formula $\alpha$ is a theorem of $SK_t$ iff the sequent $\alpha$ is $SK_t$-provable. 

\begin{prop}\label{prop4.1} For every formula $\alpha$, we have that $\alpha=\neg \neg \alpha$
\end{prop}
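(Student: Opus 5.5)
The plan is a structural induction on the construction of $\alpha\in\Fs$, following the inductive definition of formulas in Kashima's setting. Here $\neg$ is not a connective but the syntactic operation defined recursively just before the statement. The point to keep in view is that $\neg\alpha$ is always a formula of $\Fs$, so $\neg\neg\alpha$ makes sense; the identity is literal equality of formulas, not provable equivalence.

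Before the induction proper, I would note that the defining clauses of $\neg$ are only given for $\neg\neg v$ and for compound formulas. The clause for a variable must be read as $\neg v$ being the formula $\neg v$ itself, i.e.\ a negated variable, which is a formula by clause 1. Thus $\neg$ is a total map $\Fs\to\Fs$, defined by recursion on the unique readability of formulas.

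\emph{Base cases.} If $\alpha=v$, then $\neg\alpha$ is the formula $\neg v$, and $\neg\neg v=v$ by the first defining clause. If $\alpha=\neg v$, then $\neg\alpha=\neg\neg v=v$, and hence $\neg\neg\alpha=\neg v=\alpha$.

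\emph{Inductive cases.} I would treat the three dual pairs of clauses in parallel.
\begin{itemize}
\item For $\alpha=\beta\wedge\gamma$: $\neg\neg(\beta\wedge\gamma)=\neg(\neg\beta\vee\neg\gamma)=\neg\neg\beta\wedge\neg\neg\gamma$, which equals $\beta\wedge\gamma$ by the induction hypothesis. The case $\alpha=\beta\vee\gamma$ is symmetric.
\item For $\alpha=\H\beta$: $\neg\neg\H\beta=\neg\P\neg\beta=\H\neg\neg\beta=\H\beta$.
\item Similarly, $\neg\neg\G\beta=\neg\F\neg\beta=\G\neg\neg\beta=\G\beta$.
\item The cases $\P\beta$ and $\F\beta$ go through $\H$ and $\G$ in the same way.
\end{itemize}
Each case uses one clause for the outer connective, its dual clause, and the induction hypothesis on the immediate subformulas.

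There is no real obstacle. The only delicate point is the base case: one must make explicit that $\neg$ applied to a bare variable yields the literal $\neg v$, so that the definition is total and the recursion is well founded. Once that is fixed, each inductive step is a two-line computation.
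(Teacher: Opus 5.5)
Your proposal is correct and takes the same approach as the paper, whose entire proof is ``by induction on the complexity of $\alpha$.'' Your case analysis, with both $v$ and the literal $\neg v$ as base cases, followed by the dual-pair computations, is exactly the routine verification that one-line proof leaves implicit.
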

\begin{proof} 
By  induction on the complexity of $\alpha$.
\end{proof} 

\begin{prop}\label{prop4.2} For every formula $\alpha$, it holds that  $SK_t\vdash \alpha,\neg \alpha$.
\end{prop}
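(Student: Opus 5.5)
We argue by induction on the complexity of $\alpha$, using at each step the rule that introduces the main connective of $\alpha$ together with the induction hypothesis for its immediate subformulas. Throughout, exchange, weakening and contraction are used freely to rearrange and pad the sequences. By Proposition~\ref{prop4.1}, and since $\neg$ is defined so that it swaps each connective with its dual, it suffices to treat $\alpha$ a variable, a conjunction, an $\H$-formula and a $\G$-formula. In each remaining case $\neg\alpha$ belongs to one of these forms and $\neg\neg\alpha=\alpha$, so the required sequent $\neg\alpha,\neg\neg\alpha$ follows from the treated case by exchange.

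\textbf{Base case.} If $\alpha=v$, the sequent $v,\neg v$ is an axiom. If $\alpha=\neg v$, then $\neg\alpha=v$, and $\neg v,v$ follows from the axiom by exchange.

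\textbf{Conjunction.} Let $\alpha=\beta\wedge\gamma$, so $\neg\alpha=\neg\beta\vee\neg\gamma$. From the induction hypothesis $\beta,\neg\beta$, weakening and exchange give $\neg\beta,\neg\gamma,\beta$. Likewise $\gamma,\neg\gamma$ gives $\neg\beta,\neg\gamma,\gamma$. The rule $\wedge$ with context $\Gamma=\neg\beta,\neg\gamma$ yields $\neg\beta,\neg\gamma,\beta\wedge\gamma$. Exchange and the rule $\vee$ then give $\beta\wedge\gamma,\neg\beta\vee\neg\gamma$. The disjunction case is symmetric, since $\neg\alpha$ is then a conjunction.

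\textbf{Modal cases.} These are the main point, because they are the only place the nested structure and the $\P$/$\F$ rules are needed. Let $\alpha=\G\beta$, so $\neg\alpha=\F\neg\beta$. Start from the induction hypothesis $\beta,\neg\beta$, rearranged by exchange as $\neg\beta,\beta$, and wrap it into a nested sequent. The rule $\F$ has premise $\Gamma,{}^\F\!\lbrace\Delta,\neg\beta\rbrace$ and conclusion $\Gamma,{}^\F\!\lbrace\Delta\rbrace,\F\neg\beta$. So we want a derivation of ${}^\F\!\lbrace\beta,\neg\beta\rbrace$. Then $\F$ gives ${}^\F\!\lbrace\beta\rbrace,\F\neg\beta$; exchange puts the bracket last, the rule $\G$ gives $\F\neg\beta,\G\beta$, and exchange gives $\G\beta,\F\neg\beta$.

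To obtain ${}^\F\!\lbrace\Delta\rbrace$ from $\Delta$ we use the turn rules and the empty sequent. Weakening the empty sequent is not allowed, so we use the context. From $\Delta$, read as $\Delta,(\,)$ with an empty component, we aim to reach $\Delta,{}^\P\!\lbrace\,\rbrace$ and then apply turn to get ${}^\F\!\lbrace\Delta\rbrace,(\,)={}^\F\!\lbrace\Delta\rbrace$. Weakening lets us append the sequent $\Delta'={}^\P\!\lbrace\,\rbrace$ to $\Delta$, giving $\Delta,{}^\P\!\lbrace\,\rbrace$. Applying the second turn rule with $\Gamma=\Delta$ and an empty inner sequent gives ${}^\F\!\lbrace\Delta\rbrace$. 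Taking $\Delta=\beta,\neg\beta$, possibly after exchange, produces ${}^\F\!\lbrace\beta,\neg\beta\rbrace$.

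The case $\alpha=\H\beta$ is identical with $\P$ and $\F$ interchanged: we append ${}^\F\!\lbrace\,\rbrace$ and use the first turn rule, then apply the rules $\P$ and $\H$.

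The obstacle I expect is exactly this wrapping step. One must check that weakening may append an arbitrary sequent such as ${}^\P\!\lbrace\,\rbrace$, and that turn may be applied with empty components. Both hold under Kashima's definitions, because sequents may have $n=0$ components and weakening allows any $\Delta$.
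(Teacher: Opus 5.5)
Your proposal is correct and follows essentially the same route as the paper. Both use induction on complexity, handle the conjunction case via weakening, exchange, $\wedge$ and $\vee$, and handle the modal cases by appending an empty bracket, applying turn to wrap $\beta,\neg\beta$, and then using the $\P$/$\F$ rule followed by the $\H$/$\G$ rule, exactly as in the paper's derivation for $\P\beta$.

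The only difference is organizational: you treat $\G$ and $\H$ explicitly and obtain the $\neg v$, $\vee$, $\P$ and $\F$ cases by duality via $\neg\neg\alpha=\alpha$. That is fine once you note that the induction hypothesis needed for $\neg\beta$ is just $\beta,\neg\beta$ after an exchange, or else measure complexity so that $\neg$ preserves it.
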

\begin{proof} 
We use induction on the complexity of $\alpha$,  $c(\alpha)$.\\
If $c(\alpha)=0$ then $\alpha$ is a propositional variable and the proposition holds trivially.\\
Suppose that $c(\alpha)=n>0$. Let us assume that (I.H.) for every formula $\beta$ such that $c(\beta)<n$ it holds $SK_t\vdash  \beta,\neg \beta$. We analyze different cases.\\
(1) $\alpha=\neg \beta$. We have to see that $SK_t\vdash  \alpha, \neg \alpha$, that is $SK_t\vdash \neg \beta, \neg \neg \beta$. By Proposition \ref{prop4.1} , we have to prove that  $SK_t\vdash \neg \beta, \beta$, but this holds by the (I.H.) and using exchange.\\
(2)  $\alpha=\beta\wedge \gamma$. We have to prove that $SK_t\vdash \beta \wedge \gamma, \neg (\beta \wedge \gamma)$, but by the definition of $\neg$, we have to prove that $SK_T\vdash \beta \wedge \gamma, \neg \beta \vee \neg \gamma$. By the (I.H.) we know that $SK_t\vdash \beta, \neg \beta$ and $SK_t \vdash  \gamma, \neg \gamma$.

\begin{prooftree}
\AxiomC{$\begin{array}{c} \vdots \\ \beta, \neg \beta \end{array}$}
\RightLabel{w}
\UnaryInfC{$\beta, \neg \beta, \neg \gamma$}
\RightLabel{e}
\UnaryInfC{$\neg \beta, \beta,  \neg \gamma$}
\RightLabel{e}
\UnaryInfC{$\neg \beta, \neg \gamma, \beta$}
\AxiomC{$\begin{array}{c} \vdots \\ \gamma, \neg \gamma \end{array}$}
\RightLabel{w}
\UnaryInfC{$\gamma, \neg \gamma, \neg \beta$}
\RightLabel{e}
\UnaryInfC{$\neg \gamma, \gamma,  \neg \beta$}
\RightLabel{e}
\UnaryInfC{$\neg \gamma, \neg \beta, \gamma$}
\RightLabel{e}
\UnaryInfC{$\neg \beta, \neg \gamma, \gamma$}
\RightLabel{$\wedge$}
\BinaryInfC{$\neg \beta, \neg \gamma, \beta \wedge \gamma$}
\RightLabel{e}
\UnaryInfC{$\neg \beta, \beta \wedge \gamma, \neg \gamma$}
\RightLabel{e}
\UnaryInfC{$\beta \wedge \gamma, \neg \beta, \neg \gamma$}
\RightLabel{$\vee$}
\UnaryInfC{$\beta \wedge \gamma, \neg \beta \vee \neg \gamma$}
\end{prooftree}
(3) $\alpha=\beta\vee \gamma$. Analogously to the previous case.\\
(4) $\alpha=\P \beta$. Let us see that $SK_t\vdash \P \beta, \neg \P \beta$, i.e. $SK_t \vdash  \P \beta, \H \neg \beta$. By the (I.H.) we have $SK_t\vdash \beta, \neg \beta$.
\begin{prooftree}
\AxiomC{$\begin{array}{c} \vdots \\ \beta, \neg \beta \end{array}$}
\RightLabel{e}
\UnaryInfC{$\neg \beta, \beta$}
\RightLabel{w}
\UnaryInfC{$\neg \beta, \beta, ^\F\!\{ \}$}
\RightLabel{t}
\UnaryInfC{$^\P\!\{ \neg \beta, \beta \}$}
\RightLabel{$\P$}
\UnaryInfC{$^\P\!\{ \neg \beta \}, \P \beta$}
\RightLabel{e}
\UnaryInfC{$\P \beta, ^\P\!\{ \neg \beta \}$}
\RightLabel{$\H$}
\UnaryInfC{$\P \beta, \H\neg \beta$}
\end{prooftree}
Cases (5) $\alpha=\F \beta$, (6) $\alpha=\H \beta$ and (7) $\alpha=\G \beta$   are proved analogously to the previous case. \end{proof}

\begin{prop}\label{prop4.3} If $SK_t\vdash \alpha, \neg \beta$,  $SK_t \vdash \beta, \neg \alpha$, $SK_t\vdash \beta, \neg \gamma$ and $SK_t \vdash \gamma, \neg \beta$ then $SK_t\vdash \alpha, \neg \gamma$ and $SK_t\vdash \gamma, \neg \alpha$
\end{prop}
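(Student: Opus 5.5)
This is a transitivity property for the relation ``$SK_t\vdash \alpha,\neg\beta$ and $SK_t\vdash \beta,\neg\alpha$''. The plan is to derive both conclusions from the cut rule of $SK_t$ applied with cut formula $\beta$, together with weakening and exchange. By Proposition~\ref{prop4.1}, $\neg\neg\beta=\beta$, so the pair $\beta,\neg\beta$ is exactly of the shape required by cut. It is enough to derive $\alpha,\neg\gamma$; the second conclusion $\gamma,\neg\alpha$ follows by the symmetric argument, swapping the roles of the hypotheses: use $\gamma,\neg\beta$ and $\beta,\neg\alpha$ in place of $\alpha,\neg\beta$ and $\beta,\neg\gamma$.

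The derivation of $\alpha,\neg\gamma$ uses the hypotheses $SK_t\vdash\alpha,\neg\beta$ and $SK_t\vdash\beta,\neg\gamma$. The steps are as follows.
\begin{enumerate}
\item From $\alpha,\neg\beta$, apply weakening to get $\alpha,\neg\beta,\neg\gamma$. Then use exchange to reach $\alpha,\neg\gamma,\neg\beta$.
\item From $\beta,\neg\gamma$, apply weakening to get $\beta,\neg\gamma,\alpha$. Then use exchange to reach $\alpha,\neg\gamma,\beta$.
\item The two sequents now share the context $\Gamma=\alpha,\neg\gamma$, and their last formulas are $\beta$ and $\neg\beta$ respectively. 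The cut rule, with premises $\Gamma,\beta$ and $\Gamma,\neg\beta$, yields $\Gamma=\alpha,\neg\gamma$.
\end{enumerate}
Exchange permutes adjacent blocks $\Delta,\Pi$, so any reordering of a flat list of formulas is reachable by finitely many exchanges.

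For $\gamma,\neg\alpha$, weaken $\gamma,\neg\beta$ to the context $\gamma,\neg\alpha$ followed by $\neg\beta$, and weaken $\beta,\neg\alpha$ to the same context followed by $\beta$. Cutting on $\beta$ then gives $\gamma,\neg\alpha$.

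The only subtle point is the formal shape of the cut rule. Cut requires the premises $\Gamma,\alpha$ and $\Gamma,\neg\alpha$ with the cut formula in last position. We must also check that the cut formula paired with $\beta$ is literally $\neg\beta$, which is immediate here. Proposition~\ref{prop4.1} is needed only if one prefers to cut on $\neg\beta$ instead, since then its negation $\neg\neg\beta$ equals $\beta$. No induction is needed, so I expect no genuine obstacle beyond bookkeeping the exchanges.
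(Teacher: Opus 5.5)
Your proposal is correct and follows essentially the same route as the paper. The paper likewise weakens and exchanges the two relevant hypotheses into the common context ($\alpha,\neg\gamma$, resp.\ $\gamma,\neg\alpha$) followed by $\beta$ and $\neg\beta$, then applies cut on $\beta$; as you note, Proposition~\ref{prop4.1} is not actually needed for this.
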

\begin{proof} 
From $SK_t \vdash \beta, \neg \gamma$ and $SK_t\vdash \alpha, \neg \beta$ we have
\begin{prooftree}
\AxiomC{$\begin{array}{c} \vdots \\ \beta, \neg \gamma \end{array}$}
\RightLabel{w}
\UnaryInfC{$\beta, \neg \gamma, \alpha$}
\RightLabel{e}
\UnaryInfC{$\neg \gamma, \beta, \alpha$}
\RightLabel{e}
\UnaryInfC{$\neg \gamma, \alpha, \beta$}
\RightLabel{e}
\UnaryInfC{$\alpha, \neg \gamma, \beta$}
\AxiomC{$\begin{array}{c} \vdots \\ \alpha, \neg \beta \end{array}$}
\RightLabel{w}
\UnaryInfC{$\alpha, \neg \beta, \neg \gamma$}
\RightLabel{e}
\UnaryInfC{$\alpha, \neg \gamma, \neg \beta$}
\RightLabel{cut}
\BinaryInfC{$\alpha, \neg \gamma$}
\end{prooftree}
On the other hand, from $SK_t\vdash \beta, \neg \alpha$ and $SK_t\vdash \gamma, \neg \beta$ we have
\begin{prooftree}
\AxiomC{$\begin{array}{c} \vdots \\ \beta, \neg \alpha \end{array}$}
\RightLabel{w}
\UnaryInfC{$\beta, \neg \alpha, \gamma$}
\RightLabel{e}
\UnaryInfC{$\neg \alpha, \beta, \gamma$}
\RightLabel{e}
\UnaryInfC{$\neg \alpha, \gamma, \beta$}
\RightLabel{e}
\UnaryInfC{$\gamma, \neg \alpha, \beta$}
\AxiomC{$\begin{array}{c} \vdots \\ \gamma, \neg \beta \end{array}$}
\RightLabel{w}
\UnaryInfC{$\gamma, \neg \beta, \neg \alpha$}
\RightLabel{e}
\UnaryInfC{$\gamma, \neg \alpha, \neg \beta$}
\RightLabel{cut}
\BinaryInfC{$\gamma, \neg \alpha$}
\end{prooftree}
\end{proof} 

\subsection{Soundness and completeness}
\begin{defi} 
\noindent(1) Let $\alpha \in \Fs$. We say that $\alpha$ is valid in a given tense Boolean algebra $\A$ if for every homomorphism  $h:\mathfrak{F_m}_{SK_t} \to \A_\La$, we have $h(\alpha)=1$ 
. We say that $\alpha$ is valid if it is valid in every tense Boolean algebra. \\
(2) If $\Gamma$ is a sequent, we say that $\Gamma$ is valid if $ \Gamma^*$ is valid.
\end{defi}

\begin{lem} Every axiom of $SK_t$ is valid and every rule in $SK_t$ preserves validity. 
\end{lem}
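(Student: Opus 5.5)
The plan is to show that every axiom of $SK_t$ is valid and that each rule preserves validity, reducing everything to computations in an arbitrary $\A\in\Bt$ under an arbitrary homomorphism $h:\mathfrak{Fm}_{SK_t}\to\A_\La$. Before treating the rules I would prove two auxiliary facts.

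\textbf{Auxiliary facts.}

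\emph{Negation.} The syntactic negation $\neg\alpha$ of $SK_t$ is interpreted as Boolean complement, that is, $h(\neg\alpha)=\neg h(\alpha)$ for every formula $\alpha$. The proof is by induction on $\alpha$.
\begin{itemize}
\item The case $\alpha=\neg v$ is Boolean double negation.
\item The cases $\wedge$ and $\vee$ are De Morgan's laws.
\item The cases $\H,\G,\P,\F$ follow from (t0) and its dual forms $\neg\P x=\H\neg x$ and $\neg\F x=\G\neg x$. These dual forms are obtained from (t0) by replacing $x$ with $\neg x$ and negating both sides.
\end{itemize}

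\emph{Meaning formulas.} For sequents $\Gamma,\Delta$ we have $h((\Gamma,\Delta)^*)=h(\Gamma^*)\vee h(\Delta^*)$. This uses $h((\,\,)^*)=h(v_0\wedge\neg v_0)=0$, so the empty sequent behaves as the unit of $\vee$.

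With these facts, validity of a sequent $\Gamma$ means $h(\Gamma^*)=1$ for all $h$. Every rule then becomes an implication between equations of the form $x\vee y=1$.

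\textbf{Axiom and structural rules.} The axiom $v,\neg v$ is valid because $h(v)\vee\neg h(v)=1$. Exchange and contraction preserve validity by associativity, commutativity and idempotence of $\vee$. Weakening preserves validity since $x=1$ implies $x\vee y=1$.

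\textbf{Logical rules.}
\begin{itemize}
\item For $\wedge$: if $x\vee a=1$ and $x\vee b=1$, then distributivity gives $x\vee(a\wedge b)=1$.
\item For $\vee$, $\H$ and $\G$: premise and conclusion have the same meaning formula up to bracketing. For instance, $({}^\P\!\lbrace\alpha\rbrace)^*=\H\alpha$.
\item For the $\P$ rule: the premise gives $x\vee\H(d\vee a)=1$, where $x=h(\Gamma^*)$, $d=h(\Delta^*)$ and $a=h(\alpha)$. By (t8), $\H(d\vee a)=\H(a\vee d)\le\P a\vee\H d$. Hence $x\vee\H d\vee\P a=1$, which is the meaning of the conclusion. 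The $\F$ rule is handled in the same way using the $\G$-part of (t8).
\item For cut: the premises give $x\vee a=1$ and $x\vee\neg a=1$, using the negation lemma. Then $x=x\vee(a\wedge\neg a)=(x\vee a)\wedge(x\vee\neg a)=1$.
\end{itemize}

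\textbf{Turn rules (main obstacle).} For the first turn rule the premise means $x\vee\G y=1$, with $x=h(\Gamma^*)$ and $y=h(\Delta^*)$, and the conclusion means $\H x\vee y=1$. This is exactly (t12). For the second turn rule we need the mirror statement $x\vee\H y=1$ iff $\G x\vee y=1$. This is not literally listed in Proposition~\ref{prop3.1}, so I would derive it as follows.
\begin{itemize}
\item By complementation, $x\vee\H y=1$ is equivalent to $\neg x\le\H y$.
\item By (t14), $\neg x\le\H y$ is equivalent to $\F\neg x\le y$.
\item By (t0), $\F\neg x=\neg\G x$, and $\neg\G x\le y$ is equivalent to $\G x\vee y=1$.
\end{itemize}
The same chain with (t13) re-derives (t12), which serves as a check.

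The other delicate point is bookkeeping when some of $\Gamma,\Delta$ is empty. The convention $(\,\,)^*=v_0\wedge\neg v_0$ evaluates to $0$, so, for example, ${}^\F\!\lbrace\,\rbrace$ has meaning $\G 0$ and the arguments above go through uniformly. Still, I would state the meaning-formula fact explicitly so these cases are not overlooked.
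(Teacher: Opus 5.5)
Your proposal is correct and follows essentially the same route as the paper: you check the rules one by one in an arbitrary tense Boolean algebra, using (t8) for the $\P$/$\F$ rules and the adjunction (t14), together with (t0) (equivalently (t12)), for the turn rules. You are in fact more careful than the paper. You make explicit that the defined negation is interpreted as Boolean complement, which the cut rule needs. You treat both turn rules and the empty-sequent convention. And you phrase the turn step as an equivalence of conditions, $x\vee\H y=1$ iff $\G x\vee y=1$, rather than as the paper's chain of equalities between elements, which does not hold as literally written.
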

\begin{proof} Let $h:\Fs \to \A_\La$ be a homomorphism and let $\A \in \mathbb{B}_t$.\\
$v_0,\neg v_0$ is valid. This follows immediately from the fact that $h$ is a homomorphism. \\
Let us show that every rule preserves validity. We will prove only one of the Turn rules (the other one is proved analogously) and the rule $\P$ (the rule $\F$ is proved analogously). The validity of the remaining rules follows directly from the definition of the meaning formula and the fact that $h$ is a homomorphism. \\[2mm]
\textbf{Turn:} Suppose that $\Gamma, ^\P\!\{ \Delta\}$ is valid, then $h((\Gamma, ^\P\!\{ \Delta\})^*)=1$. By Proposition \ref{prop3.1} (t14), we have:
\[{\renewcommand{\arraystretch}{1.4}
\begin{tabular}{rll}
$1$ & $=h((\Gamma, ^\P\!\{ \Delta\})^*)$                        &\\
    & $=h(\Gamma^* \vee \H(\Delta^*))$                        &\\
    & $=h(\Gamma^*) \vee \H(h(\Delta^*))$                     &, \\
    & $=\G(h(\Gamma^*)) \vee h(\Delta^*)$                     &\\
    & $=h(\G(\Gamma^*) \vee \Delta^*)$                        &\\
    & $=h((^\F\!\{\Gamma\}, \Delta)^*)$                         &\\
\end{tabular}}\]
Then, $^F\{\Gamma\}, \Delta$ is valid. \\
[2mm]
\textbf{P:} If $\Gamma, ^\P\!\{ \Delta, \alpha\}$ is valid, then:
$$ {\renewcommand{\arraystretch}{1.3}
\begin{tabular}{rll}
$1$ & $=h((\Gamma, ^\P\!\{ \Delta, \alpha\})^*)$                         & \\
    & $=h(\Gamma^* \vee \H(\Delta^* \vee \alpha) )$                    & \\
    & $=h(\Gamma^*) \vee \H(h(\Delta^*) \vee v(\alpha))$              &\\
\end{tabular}}$$
By Proposition \ref{prop3.1} (t8) we know that  $\H(h(\Delta^*) \vee h(\alpha)) \leq \H(h(\Delta)^*) \vee \P(h(\alpha))$. Then, \break $h(\Gamma^*) \vee \H(h(\Delta^*) \vee h(\alpha)) \leq h(\Gamma^*) \vee \H(h(\Delta^*)) \vee \P(h(\alpha))$ and then $1 \leq h(\Gamma^*) \vee \H(h(\Delta^*)) \vee \P(h(\alpha))$. Therefore
\[ {\renewcommand{\arraystretch}{1.3}
\begin{tabular}{rll}
$1$ & $=h(\Gamma^*) \vee \H(h(\Delta^*)) \vee \P(h(\alpha))$                & \\
    & $=h(\Gamma^* \vee \H(\Delta^*) \vee \P \alpha )$                    & \\
    & $=h((\Gamma, ^\P\!\{\Delta\}, \P \alpha)^*)$                                &\\
\end{tabular}} \]
That is, $\Gamma, ^\P\{\Delta\},\P(\alpha)$ is valid.
\end{proof}

\

\begin{theo} (Correctness) If $SK_t\vdash \Gamma$ then $ \Gamma$ is valid.
\end{theo}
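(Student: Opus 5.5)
The plan is to argue by induction on the height of an $SK_t$-proof of $\Gamma$, using the preceding lemma, which says that every axiom of $SK_t$ is valid and every rule of $SK_t$ preserves validity. Fix $\A\in\Bt$ and a homomorphism $h:\mathfrak{F_m}_{SK_t}\to\A_\La$. We must show that $h(\Gamma^*)=1$ whenever $SK_t\vdash\Gamma$.

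\emph{Base case.} If the proof consists of a single node, then $\Gamma$ is an instance of the axiom $v,\neg v$. Then $\Gamma^*=v\vee\neg v$, and $h(v\vee\neg v)=h(v)\vee\neg h(v)=1$ by the Boolean reduct of $\A$. This is exactly the axiom clause of the lemma.

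\emph{Inductive step.} Suppose the last inference is an instance of a rule with premises $\Gamma_1$ (and possibly $\Gamma_2$). By the induction hypothesis each premise is valid. By the lemma, the conclusion $\Gamma$ is then valid.

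Most rules are covered explicitly in the lemma or are routine from the definition of the meaning formula. For exchange, weakening and contraction, associativity, commutativity and idempotence of $\vee$ together with $x\leq x\vee y$ suffice. For $\wedge$, distributivity gives $(\gamma\vee a)\wedge(\gamma\vee b)=\gamma\vee(a\wedge b)$. The rules $\H$ and $\G$ are immediate, since $({}^\P\{\alpha\})^*=\H\alpha$ and $({}^\F\{\alpha\})^*=\G\alpha$. For cut, note that $h(\neg\alpha)=\neg h(\alpha)$ by (t0), proved by induction on $\alpha$ via the De Morgan clauses defining $\neg\alpha$. Then $(g\vee a)\wedge(g\vee\neg a)=g$.

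The one point needing care is that $\Gamma$ may be an arbitrary sequent in which $\Gamma^*$ is a disjunction of subsequents' meanings. The lemma's computations treat the context $\Gamma$ as a single disjunct, so nothing beyond them is required.

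The only real obstacle is a bookkeeping issue: the empty sequent, whose meaning is $v_0\wedge\neg v_0$. In rules like weakening with empty $\Delta$, or turn with empty context, we must make sure the meaning-formula computation still works. Since $h(v_0\wedge\neg v_0)=0$ is the neutral element for $\vee$, these cases reduce to the nonempty ones. With that observed, the induction closes and every $SK_t$-provable sequent is valid.
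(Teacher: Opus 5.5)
Your proposal is correct and takes essentially the same route as the paper: induction on the derivation (you use height, the paper uses the number of rule applications), with the base case and the inductive step discharged by the preceding lemma that every axiom is valid and every rule preserves validity. Your extra checks fill in details the paper calls routine, namely $h(\neg\alpha)=\neg h(\alpha)$ for the defined negation (proved by induction using (t0)), which the cut rule needs, and the handling of empty sequents via $h((\,)^*)=0$.
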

\begin{proof} By induction on the size (number of instances of rule applications) of the proof of $\Gamma$.
\end{proof}

\noindent We define the binary relation $\equiv$ on the set of all formulas as follows. For every $\alpha, \beta\in \Fs$ 

\begin{center} $\alpha \equiv \beta$ \ if and only if \ $SK_t \vdash \alpha, \neg \beta$ and $SK_t \vdash \beta, \neg \alpha$. \end{center}
Then
\begin{prop}\label{prop4.4} $\equiv$ is an equivalence relation. 
\end{prop}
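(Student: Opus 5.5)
We verify the three defining properties of an equivalence relation directly from the definition of $\equiv$, using the results on $SK_t$ established just before the statement.

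\emph{Reflexivity.} For $\alpha\in\Fs$, $\alpha\equiv\alpha$ means $SK_t\vdash\alpha,\neg\alpha$ (the two required conditions coincide). This is exactly Proposition~\ref{prop4.2}.

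\emph{Symmetry.} The definition of $\alpha\equiv\beta$ is the conjunction of $SK_t\vdash\alpha,\neg\beta$ and $SK_t\vdash\beta,\neg\alpha$. This conjunction is invariant under interchanging $\alpha$ and $\beta$, so $\alpha\equiv\beta$ implies $\beta\equiv\alpha$ with nothing to prove.

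\emph{Transitivity.} Assume $\alpha\equiv\beta$ and $\beta\equiv\gamma$, that is,
\[
SK_t\vdash\alpha,\neg\beta,\quad SK_t\vdash\beta,\neg\alpha,\quad SK_t\vdash\beta,\neg\gamma,\quad SK_t\vdash\gamma,\neg\beta.
\]
These are precisely the four hypotheses of Proposition~\ref{prop4.3}. That proposition yields $SK_t\vdash\alpha,\neg\gamma$ and $SK_t\vdash\gamma,\neg\alpha$, i.e.\ $\alpha\equiv\gamma$. The underlying mechanism is a cut on $\beta$ after weakening and exchange, and it is already packaged in Proposition~\ref{prop4.3}, so it need not be repeated.

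There is no real obstacle. The only point needing care is reflexivity: it relies on Proposition~\ref{prop4.2}, whose inductive proof uses Proposition~\ref{prop4.1} ($\neg\neg\alpha=\alpha$) to treat the defined negation correctly. Since both are already established, the proof reduces to these citations.
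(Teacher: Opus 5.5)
Your proof is correct and is the same argument as the paper's: reflexivity from Proposition~\ref{prop4.2}, transitivity from Proposition~\ref{prop4.3} (whose four hypotheses are exactly the unpacked assumptions $\alpha\equiv\beta$ and $\beta\equiv\gamma$), and symmetry directly from the symmetric form of the definition of $\equiv$.
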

\begin{proof} 
Reflexivity and transitivity follow from propositions  \ref{prop4.2} and \ref{prop4.3}, symmetry is immediate from definition of $\equiv$.
\end{proof} 
\noindent Moreover, 
\begin{prop} $\equiv$ is a congruence relation. That is, if $\alpha\equiv \beta$ and $\gamma\equiv \delta$ then 
\begin{multicols}{2}
\begin{enumerate}[\rm (1)]
\item $\alpha \wedge \gamma\equiv \beta \wedge \delta$, 
\item $\alpha \vee \gamma \equiv \beta \vee \delta$,
\item $\neg \alpha \equiv \neg \beta$,
\item $\G \alpha \equiv \G \beta$,
\item $\H \alpha \equiv \H \beta$,
\item $\F \alpha \equiv \F \beta$,
\item $\P \alpha \equiv \P \beta$.
\end{enumerate}
\end{multicols}
\end{prop}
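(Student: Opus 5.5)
Each item is proved directly from the two defining sequents of $\equiv$, using the rules of $SK_t$ together with Proposition~\ref{prop4.1}, which says $\neg\neg\alpha=\alpha$. Throughout, exchange, weakening and contraction are used freely, so that sequents can be handled as multisets.

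\textbf{Negation.} Item (3) is immediate. The hypotheses are $\alpha,\neg\beta$ and $\beta,\neg\alpha$. Since $\neg\neg\alpha=\alpha$ and $\neg\neg\beta=\beta$, the sequents required for $\neg\alpha\equiv\neg\beta$ are $\neg\alpha,\beta$ and $\neg\beta,\alpha$. These are the hypotheses up to exchange.

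\textbf{Conjunction and disjunction.} For (1) we need $\alpha\wedge\gamma,\neg\beta\vee\neg\delta$. From the hypothesis $\alpha,\neg\beta$, weaken to $\alpha,\neg\beta,\neg\delta$. From $\gamma,\neg\delta$, weaken to $\gamma,\neg\beta,\neg\delta$. The rule $\wedge$, after exchanges, gives $\neg\beta,\neg\delta,\alpha\wedge\gamma$, and the rule $\vee$ then gives $\alpha\wedge\gamma,\neg\beta\vee\neg\delta$. The symmetric sequent $\beta\wedge\delta,\neg\alpha\vee\neg\gamma$ is obtained the same way. Item (2) follows from (1) and (3): by Proposition~\ref{prop4.1} we have $\neg(\neg\alpha\wedge\neg\gamma)=\alpha\vee\gamma$. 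Alternatively one can argue directly, dual to (1).

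\textbf{Tense operators.} Items (4)--(7) require the nested rules, imitating case (4) of Proposition~\ref{prop4.2}. For (4), $\G\alpha\equiv\G\beta$ means proving $\G\alpha,\F\neg\beta$ and $\G\beta,\F\neg\alpha$. Start from $\alpha,\neg\beta$ and weaken to $\alpha,\neg\beta,{}^\P\!\{\}$. Apply the second turn rule to get ${}^\F\!\{\alpha,\neg\beta\}$. Now ${}^\F\!\{\alpha,\neg\beta\}$ has the form $\Gamma,{}^\F\!\{\Delta,\neg\beta\}$ with $\Gamma$ empty and $\Delta=\alpha$, so the rule $\F$ gives ${}^\F\!\{\alpha\},\F\neg\beta$. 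After exchange, the rule $\G$ gives $\F\neg\beta,\G\alpha$, and exchange finishes. The other sequent is symmetric. Item (5), for $\H$, is identical with the roles of $\P$ and $\F$ swapped: use the first turn rule and the rules $\P$ and $\H$. Items (6) and (7) need $\F\alpha,\G\neg\beta$ and $\P\alpha,\H\neg\beta$. These follow from (4) and (5) applied to $\neg\alpha\equiv\neg\beta$, which holds by (3), since $\neg\F\alpha=\G\neg\alpha$ and $\neg\neg\beta=\beta$.

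The main obstacle is the nested-sequent bookkeeping in (4)--(5). The rule $\F$ applies only to a bracket ${}^\F\!\{\cdots\}$ occurring in the context of the same sequent, so one must check that the empty-context instance of turn, with $\Gamma$ empty, legitimately produces the needed ${}^\F\!\{\alpha,\neg\beta\}$. This works exactly as in the proof of Proposition~\ref{prop4.2}, case (4).
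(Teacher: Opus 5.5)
Your proof is correct and follows the paper's. The negation and conjunction cases are the same derivations, and your turn/$\F$/$\G$ derivation for $\G\alpha\equiv\G\beta$ is the mirror image of the paper's turn/$\P$/$\H$ derivation for $\P\alpha\equiv\P\beta$. The only difference is an economy: you obtain (2), (6) and (7) from (1), (4) and (5) via (3) and Proposition~\ref{prop4.1} (e.g.\ $\neg\G\neg\beta=\F\beta$) rather than writing out analogous derivations. One small slip: in your turn step it is the bracket contents $\Delta$, not $\Gamma$, that are empty, but this does not affect the argument.
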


\begin{proof} We provide three examples; the remaining cases are treated analogously, taking into account the definition of negation for formulas.
\\
[2mm]
Let us see that if $\alpha \equiv \beta$ then $\neg \alpha \equiv \neg \beta$. Notice that to prove $\neg \alpha \equiv \neg \beta$ we need to prove that  $SK_t \vdash  \neg \alpha, \neg \neg \beta$ and $SK_t \vdash  \neg \beta, \neg \neg \alpha$. But this is equivalent to proving $SK_t \vdash \neg \alpha, \beta$ and $SK_t \vdash  \neg \beta, \alpha$ (by Proposition \ref{prop4.1}), and this holds by hypothesis (and using the exchange rule).\\
[2mm]
Suppose that  $\alpha \equiv \beta$ and $\gamma \equiv \delta$. To prove that $\alpha \wedge \gamma \equiv \beta \wedge \delta$ we need to check that  $SK_t \vdash \alpha \wedge \gamma, \neg( \beta  \wedge \delta)$ and $SK_t \vdash \beta \wedge \delta, \neg( \alpha \wedge \gamma)$ which, in turn, is equivalent to $SK_t \vdash \alpha \wedge \gamma, \neg \beta \vee \neg \delta$ and $SK_t \vdash \beta \wedge \delta, \neg \alpha \vee \neg \gamma$. Then, consider the following proofs.
\begin{multicols}{2}
\begin{prooftree}
\AxiomC{$\begin{array}{c} \vdots \\ \alpha, \neg \beta \end{array}$}
\RightLabel{e}
\UnaryInfC{$\neg \beta, \alpha$}
\RightLabel{w}
\UnaryInfC{$\neg \beta, \alpha, \neg \delta$}
\RightLabel{e}
\UnaryInfC{$\neg \beta, \neg \delta, \alpha$}
\AxiomC{$\begin{array}{c} \vdots \\ \gamma, \neg \delta \end{array}$}
\RightLabel{w}
\UnaryInfC{$\gamma, \neg \delta, \neg \beta$}
\RightLabel{e}
\UnaryInfC{$\neg \delta, \gamma, \neg \beta$}
\RightLabel{e}
\UnaryInfC{$\neg \delta, \neg \beta, \gamma$}
\RightLabel{e}
\UnaryInfC{$\neg \beta, \neg \delta, \gamma$}
\RightLabel{$\wedge$}
\BinaryInfC{$\neg \beta, \neg \delta, \alpha \wedge \gamma$}
\RightLabel{e}
\UnaryInfC{$\neg \beta, \alpha \wedge \gamma, \neg \delta$}
\RightLabel{e}
\UnaryInfC{$\alpha \wedge \gamma, \neg \beta, \neg \delta$}
\RightLabel{$\vee$}
\UnaryInfC{$\alpha \wedge \gamma, \neg \beta \vee \neg \delta$}
\end{prooftree}

\begin{prooftree}
\AxiomC{$\begin{array}{c} \vdots \\ \beta, \neg \alpha \end{array}$}
\RightLabel{e}
\UnaryInfC{$\neg \alpha, \beta$}
\RightLabel{w}
\UnaryInfC{$\neg \alpha, \beta, \neg \gamma$}
\RightLabel{e}
\UnaryInfC{$\neg \alpha, \neg \gamma, \beta$}
\AxiomC{$\begin{array}{c} \vdots \\ \delta, \neg \gamma \end{array}$}
\RightLabel{w}
\UnaryInfC{$\delta, \neg \gamma, \neg \alpha$}
\RightLabel{e}
\UnaryInfC{$\neg \gamma, \delta, \neg \alpha$}
\RightLabel{e}
\UnaryInfC{$\neg \gamma, \neg \alpha, \delta$}
\RightLabel{e}
\UnaryInfC{$\neg \alpha, \neg \gamma, \delta$}
\RightLabel{$\wedge$}
\BinaryInfC{$\neg \alpha, \neg \gamma, \beta \wedge \delta$}
\RightLabel{e}
\UnaryInfC{$\neg \alpha, \beta \wedge \delta, \neg \gamma$}
\RightLabel{e}
\UnaryInfC{$\beta \wedge \delta, \neg \alpha, \neg \gamma$}
\RightLabel{$\vee$}
\UnaryInfC{$\beta \wedge \delta, \neg \alpha \vee \neg \gamma$}
\end{prooftree}
\end{multicols}

\noindent Suppose that  $\alpha \equiv \beta$. To prove that $\P \alpha \equiv \P \beta$ is equivalent to proving $SK_t \vdash \P \alpha, \neg \P \beta$ and $SK_t \vdash \P \beta , \neg \P \alpha$ which, in turn, is equivalent to proving $SK_t \vdash  \P \alpha, \H \neg \beta$ and $SK_t \vdash  \P \beta , \H \neg \alpha$. Consider the following proofs.

\begin{multicols}{2}
\begin{prooftree}
\AxiomC{$\begin{array}{c} \vdots \\ \alpha, \neg \beta \end{array}$}
\RightLabel{e}
\UnaryInfC{$\neg \beta, \alpha$}
\RightLabel{w}
\UnaryInfC{$\neg \beta, \alpha, ^\F\!\{ \, \, \}$}
\RightLabel{t}
\UnaryInfC{$^\P\!\{ \neg \beta, \alpha\}$}
\RightLabel{$\P$}
\UnaryInfC{$^\P\!\{ \neg \beta\}, \P\alpha$}
\RightLabel{e}
\UnaryInfC{$\P\alpha, ^\P\!\{ \neg \beta\}$}
\RightLabel{$\H$}
\UnaryInfC{$\P\alpha, \H\neg \beta$}
\end{prooftree}
\begin{prooftree}
\AxiomC{$\begin{array}{c} \vdots \\ \beta, \neg \alpha \end{array}$}
\RightLabel{e}
\UnaryInfC{$\neg \alpha, \beta$}
\RightLabel{w}
\UnaryInfC{$\neg \alpha, \beta, ^\F\!\{ \, \, \}$}
\RightLabel{t}
\UnaryInfC{$^\P\!\{ \neg \alpha, \beta\}$}
\RightLabel{$\P$}
\UnaryInfC{$^\P\!\{ \neg \alpha\}, \P\beta$}
\RightLabel{e}
\UnaryInfC{$\P\beta, ^\P\!\{ \neg \alpha\}$}
\RightLabel{$\H$}
\UnaryInfC{$\P\beta, \H\neg \alpha$}
\end{prooftree}
\end{multicols}
\end{proof}

Now we shall exhibit a construction of the Lindenbaum-Tarski algebra. Consider the quotient set $\Fs\!/\!\equiv$ of the set of formulas $\Fs$ determined by the equivalence relation $\equiv$. We define on $\Fs\!/\!\equiv$ the operations $\wedge$, $\vee$, $\neg$, $\P$, $\F$, $\H$ and $\G$, as usual, in the following way. If $\alpha \in Fm$ we denote by $[\alpha]_{\equiv}$ (or simply $[\alpha]$) the class of equivalence of $\alpha$ determined by $\equiv$. 

\begin{multicols}{2}
\begin{center}
$[\alpha] \wedge [\beta]:=[\alpha \wedge \beta] $,

$[\alpha] \vee [\beta]:=[\alpha \vee \beta] $,

$\neg [\alpha]:=[\neg \alpha] $,

\columnbreak

$\P[\alpha] :=[\P \alpha] $,

$\F[\alpha] :=[\F \alpha] $,

$\H[\alpha] :=[\H \alpha] $,

$\G[\alpha] :=[\G \alpha]$.

\end{center}
\end{multicols}

Since $\equiv$ is, in particular, a congruence relation, we know that the operations are well-defined. Also, if $v_0$ is a propositional variable, we define
$$1=[v_0\vee\neg v_0]$$
and
$$0=[v_0\wedge\neg v_0]$$
It is clear that $\neg 1 = 0$.

\begin{prop}\label{prop5.1} The structure $\langle \Fs\!/\!\equiv , \wedge , \vee , \neg, 0, 1  \rangle$ is a Boolean algebra.
\end{prop}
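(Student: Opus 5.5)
The plan is to show that the relation $[\alpha]\leq[\beta]$ iff $SK_t\vdash \neg\alpha,\beta$ is a well-defined partial order on $\Fs\!/\!\equiv$. Then show that, under this order, $\wedge$ and $\vee$ are meet and join, the lattice is bounded by $0$ and $1$, it is distributive, and $\neg$ is a complement. Throughout, I will use exchange freely and write sequents up to order.

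\textbf{Order.} Well-definedness, reflexivity and transitivity follow from Propositions~\ref{prop4.2} and~\ref{prop4.3}; more precisely, from the single-sided version of the cut argument in the proof of Proposition~\ref{prop4.3}. From $\neg\alpha,\beta$ and $\neg\beta,\gamma$, weaken both to $\neg\alpha,\beta,\gamma$ and $\neg\alpha,\neg\beta,\gamma$, then cut on $\beta$. Antisymmetry holds by the definition of $\equiv$, using $\neg\neg\alpha=\alpha$ from Proposition~\ref{prop4.1}.

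\textbf{Lattice laws.} Here I verify the following.
\begin{itemize}
\item $\alpha\wedge\beta\leq\alpha$: from $\neg\alpha,\alpha$, weaken to $\neg\alpha,\neg\beta,\alpha$, then apply $\vee$.
\item If $\gamma\leq\alpha$ and $\gamma\leq\beta$, then $\gamma\leq\alpha\wedge\beta$: apply the $\wedge$ rule to $\neg\gamma,\alpha$ and $\neg\gamma,\beta$.
\item Dually for $\vee$: from $\neg\alpha,\gamma$ and $\neg\beta,\gamma$, the $\wedge$ rule gives $\neg\alpha\wedge\neg\beta,\gamma$, which is $\neg(\alpha\vee\beta),\gamma$.
\item $\alpha\leq\alpha\vee\beta$: weaken $\neg\alpha,\alpha$ to $\neg\alpha,\alpha,\beta$, then apply $\vee$.
\item Bounds: $\alpha\leq v_0\vee\neg v_0$, since $v_0,\neg v_0$ is an axiom, weaken, then apply $\vee$. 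Similarly $v_0\wedge\neg v_0\leq\alpha$, since $\neg(v_0\wedge\neg v_0)=\neg v_0\vee v_0$.
\item Complementation: $[\alpha]\vee[\neg\alpha]=1$ holds because $SK_t\vdash\alpha\vee\neg\alpha$ by Proposition~\ref{prop4.2} and the $\vee$ rule, so $1\leq[\alpha\vee\neg\alpha]$, using weakening on the negated part. Dually, $[\alpha\wedge\neg\alpha]=0$, since $\neg(\alpha\wedge\neg\alpha)=\neg\alpha\vee\alpha$ is provable.
\end{itemize}

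\textbf{Distributivity, the main obstacle.} This step needs the most care. I will show $\alpha\wedge(\beta\vee\gamma)\leq(\alpha\wedge\beta)\vee(\alpha\wedge\gamma)$; the reverse inequality holds in any lattice. I need
\[
SK_t\vdash \neg\alpha\vee(\neg\beta\wedge\neg\gamma),\ (\alpha\wedge\beta)\vee(\alpha\wedge\gamma).
\]
The calculus has no left-introduction rules, so I will avoid unpacking the left formula. Instead, I will build up from the right.
\begin{enumerate}
\item Start from the identity instances $\neg\alpha,\alpha$, $\neg\beta,\beta$ and $\neg\gamma,\gamma$, and weaken each to the sequent $\neg\alpha,\neg\beta\wedge\neg\gamma,\alpha,\beta,\gamma$.
\item Since $\neg\beta\wedge\neg\gamma$ is not atomic, first derive $\neg\alpha,\neg\beta,\alpha,\beta,\gamma$ and $\neg\alpha,\neg\gamma,\alpha,\beta,\gamma$. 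Both follow by weakening the identity instances.
\item Combine the sequent pairs with the $\wedge$ rule to obtain $\neg\alpha,\neg\beta\wedge\neg\gamma,\alpha\wedge\beta,\alpha\wedge\gamma$. This requires several $\wedge$-steps, with contractions to merge duplicates.
\item Finish with the $\vee$ rule on both sides.
\end{enumerate}
Contraction is available only for formulas, which is enough here.

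Having a bounded distributive complemented lattice, the structure is a Boolean algebra.
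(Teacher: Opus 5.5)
Your route is sound at the level of strategy and differs genuinely from the paper's. The paper checks Boolean-algebra identities one at a time, directly from the definition of $\equiv$, at two derivations per identity. It exhibits only $[\alpha]\wedge\neg[\alpha]=0$ and $[\alpha]\wedge 1=[\alpha]$, calls the rest routine, and only afterwards proves in Proposition~\ref{proporden} that the lattice order coincides with $SK_t\vdash\neg\alpha,\beta$. You instead take that relation as the primary order and show that $[\alpha\wedge\beta]$ and $[\alpha\vee\beta]$ are meet and join. This gives commutativity, associativity, idempotence and absorption at once, makes Proposition~\ref{proporden} a by-product, and leaves only bounds, complements and one distributive inequality. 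In particular you actually address distributivity, which the paper never exhibits. Your derivations for the order (well-definedness, transitivity via cut, antisymmetry), the meet/join characterisation, the bounds and the complements are all correct.

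The distributivity derivation, however, does not go through as you describe it. The $\wedge$ rule shares its context: from $\Gamma,\varphi$ and $\Gamma,\psi$ it gives $\Gamma,\varphi\wedge\psi$, so everything in $\Gamma$ is carried into the conclusion. Suppose you first raise the leaves to $\neg\alpha,\neg\beta\wedge\neg\gamma,\alpha,\beta,\gamma$, or use your step-2 sequents, which carry an extra $\alpha$. Then the surplus bare occurrences of $\alpha$, $\beta$, $\gamma$ are carried down to the root. Contraction cannot remove them, because the target contains no other copy to merge them with; only cut could.

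The leaves must instead be read off the target bottom-up. Let $\Sigma=\neg\alpha,\neg\beta\wedge\neg\gamma$.
\begin{itemize}
\item Obtain $\Sigma,\alpha\wedge\beta,\alpha\wedge\gamma$ by $\wedge$ from $\Sigma,\alpha\wedge\beta,\alpha$ and $\Sigma,\alpha\wedge\beta,\gamma$.
\item Obtain $\Sigma,\alpha\wedge\beta,\gamma$ by $\wedge$ from $\Sigma,\gamma,\alpha$ and $\Sigma,\gamma,\beta$.
\item The sequents $\Sigma,\alpha\wedge\beta,\alpha$ and $\Sigma,\gamma,\alpha$ are weakenings of $\neg\alpha,\alpha$.
\item The sequent $\Sigma,\gamma,\beta$ is a weakening of $\beta,\gamma,\neg\beta\wedge\neg\gamma$, which comes from $\beta,\gamma,\neg\beta$ and $\beta,\gamma,\neg\gamma$ by one $\wedge$ step.
\end{itemize}
Two applications of $\vee$ then give $\neg\alpha\vee(\neg\beta\wedge\neg\gamma),(\alpha\wedge\beta)\vee(\alpha\wedge\gamma)$, which is the required sequent, and no contraction is needed.
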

\begin{proof} This is a long and routine proof. By way of example, we will show two properties. Let $[\alpha] \in \Fs\!/\!\equiv$. Let us prove that $[\alpha]\wedge\neg[\alpha]=0$. That is, $
[\alpha\wedge\neg\alpha]=[\neg(v_0\vee\neg v_0)]$, which, by the definition of the equivalence relation, is equivalent to proving that $
SK_t\vdash\alpha\wedge\neg\alpha,\neg\neg(v_0\vee\neg v_0)
$ and $SK_t\vdash\neg(v_0\vee\neg v_0),\neg(\alpha\wedge\neg\alpha)
$. By Proposition \ref{prop4.1}, it suffices to prove that $
SK_t\vdash\alpha\wedge\neg\alpha,v_0\vee\neg v_0
$ and $
SK_t\vdash\neg(v_0\vee\neg v_0),\neg\alpha\vee\alpha
$. The first follows from the fact that $v_0,\neg v_0$ is an axiom and from the rules $\vee$ and \emph{weakening}. The second follows immediately from Proposition \ref{prop4.2}, the \emph{exchange} rule and, again, the rules $\vee$ and \emph{weakening}. 

Let us now prove that $[\alpha]\wedge 1=[\alpha]$. That is, $
SK_t\vdash\alpha\wedge(v_0\vee\neg v_0),\neg\alpha
$ and $
SK_t\vdash\alpha,\neg\bigl(\alpha\wedge(v_0\vee\neg v_0)\bigr).
$ By Proposition \ref{prop4.1}, this is equivalent to proving that $
SK_t\vdash\alpha\wedge(v_0\vee\neg v_0),\neg\alpha
$ and $
SK_t\vdash\alpha,\neg\alpha\vee\neg(v_0\vee\neg v_0).
$ Taking Proposition \ref{prop4.2} into account, we construct the following proofs:

\begin{multicols}{2}
\begin{prooftree}
\AxiomC{$\alpha, \neg \alpha$}    \RightLabel{e}
\UnaryInfC{$\neg \alpha, \alpha$}
\AxiomC{$v_0,\neg v_0$}    \RightLabel{$\vee$}
\UnaryInfC{$v_0 \vee \neg v_0$}
\RightLabel{w}
\UnaryInfC{$v_0 \vee \neg v_0, \neg \alpha$}
\RightLabel{e}
\UnaryInfC{$\neg \alpha, v_0 \vee \neg v_0$}
\RightLabel{$\wedge$}
\BinaryInfC{$\neg \alpha, \alpha \wedge (v_0 \vee \neg v_0)$}
\RightLabel{e}
\UnaryInfC{$\alpha \wedge (v_0 \vee \neg v_0), \neg \alpha$}
\end{prooftree}

\begin{prooftree}
\AxiomC{$\alpha,\neg \alpha$}
\RightLabel{w}
\UnaryInfC{$\alpha,\neg \alpha, \neg(v_0 \vee \neg v_0)$}
\RightLabel{$\vee$}
\UnaryInfC{$\alpha,\neg \alpha \vee  \neg(v_0 \vee \neg v_0)$}
\end{prooftree}

\end{multicols}

\end{proof}
\noindent We know that the relation $\leq$ defined on $\Fs \!/\!\equiv$ by $[\alpha] \leq [\beta]$ if and only if $[\alpha] \wedge [\beta]=[\alpha]$, is an order relation.
We define on $\Fs \!/\!\equiv$ the relation  $\prec$ in the following way: $[a] \prec [b]$ if and only if  $SK_t \vdash \neg \alpha, \beta$. 

\begin{prop}\label{proporden}
Given $\alpha, \beta \in \Fs$,  $[a]\prec [b]$ if and only if $[\alpha] \leq [\beta]$.   
\end{prop}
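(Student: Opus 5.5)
We have to show that $SK_t\vdash\neg\alpha,\beta$ holds exactly when $[\alpha\wedge\beta]=[\alpha]$. Unfolding the definition of $\equiv$ and using Proposition~\ref{prop4.1} together with the definition of $\neg$ on conjunctions, the right-hand side amounts to two provability facts. The first is
$$SK_t\vdash\alpha\wedge\beta,\neg\alpha .$$
The second is
$$SK_t\vdash\alpha,\neg\alpha\vee\neg\beta .$$
The second is always provable: Proposition~\ref{prop4.2} gives $\alpha,\neg\alpha$, and weakening with $\neg\beta$ followed by the $\vee$ rule gives the sequent. So the proposition reduces to showing that $SK_t\vdash\neg\alpha,\beta$ holds if and only if $SK_t\vdash\alpha\wedge\beta,\neg\alpha$, up to exchange.

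For the forward direction, assume $SK_t\vdash\neg\alpha,\beta$. From Proposition~\ref{prop4.2} and exchange we have $\neg\alpha,\alpha$. Applying the $\wedge$ rule with context $\Gamma=\neg\alpha$ to $\neg\alpha,\alpha$ and $\neg\alpha,\beta$ yields $\neg\alpha,\alpha\wedge\beta$. Exchange then gives $\alpha\wedge\beta,\neg\alpha$. Combined with the always-provable second sequent, this shows $\alpha\wedge\beta\equiv\alpha$, hence $[\alpha]\wedge[\beta]=[\alpha]$.

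For the converse, assume $SK_t\vdash\alpha\wedge\beta,\neg\alpha$. We need to extract $\neg\alpha,\beta$, and since $SK_t$ has no invertibility lemma for $\wedge$ stated so far, we use cut. First build a proof of $\neg(\alpha\wedge\beta),\beta$, that is, $\neg\alpha\vee\neg\beta,\beta$. Start from $\beta,\neg\beta$ (Proposition~\ref{prop4.2}), weaken to add $\neg\alpha$, exchange to reach $\beta,\neg\alpha,\neg\beta$, and apply $\vee$ to get $\beta,\neg\alpha\vee\neg\beta$. Now put both sequents into the common context $\Gamma=\neg\alpha,\beta$. Weakening and exchange turn the hypothesis into $\neg\alpha,\beta,\alpha\wedge\beta$ and the constructed sequent into $\neg\alpha,\beta,\neg(\alpha\wedge\beta)$. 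Cut on $\alpha\wedge\beta$ then yields $\neg\alpha,\beta$, i.e.\ $[\alpha]\prec[\beta]$.

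The only delicate point is this converse direction. The cut rule of $SK_t$ requires identical contexts in both premises, so the bookkeeping with weakening and exchange must be done carefully, as in the proof of Proposition~\ref{prop4.3}. It should also be checked that the relation $\prec$ is well defined on classes; this follows from the equivalence just proved, since $\leq$ is defined on classes.
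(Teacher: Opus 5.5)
Your proof is correct and follows the paper's argument essentially step for step. The forward direction applies the $\wedge$ rule to $\neg\alpha,\alpha$ and $\neg\alpha,\beta$, with $\alpha,\neg\alpha\vee\neg\beta$ always provable, and the converse is a cut on $\alpha\wedge\beta$ in the common context $\neg\alpha,\beta$; the only differences are the inessential order of weakening and the $\vee$ rule when building $\neg\alpha,\beta,\neg(\alpha\wedge\beta)$, and your correct closing remark that the equivalence itself shows $\prec$ is well defined on classes, which the paper leaves implicit.
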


\begin{proof}
Let $\alpha, \beta \in \Fs$ such that $[a] \prec [b]$. Then, $SK_t \vdash \neg \alpha, \beta$. Let us see that $[\alpha] \leq [\beta]$, that is, $SK_t \vdash \alpha \wedge  \beta, \neg \alpha$ and $SK_t \vdash \alpha, \neg (\alpha \wedge  \beta)$, which is equivalent to proving $SK_t \vdash \alpha \wedge  \beta, \neg \alpha$ and $SK_t \vdash \alpha, \neg \alpha \vee  \neg \beta$. The latter follows from Proposition \ref{prop4.2} and using the rules \emph{weakening} and $\vee$. The former has the following proof:

\begin{prooftree}
\AxiomC{$\alpha, \neg \alpha$}
\RightLabel{e}
\UnaryInfC{$\neg \alpha, \alpha$}
\AxiomC{$\neg \alpha, \beta$}
\RightLabel{$\wedge$}
\BinaryInfC{$\neg \alpha, \alpha \wedge \beta$}
\RightLabel{e}
\UnaryInfC{$\alpha \wedge \beta, \neg \alpha$}
\end{prooftree}
Suppose that $[\alpha] \leq [\beta]$. Then, $SK_t \vdash \alpha \wedge  \beta, \neg \alpha$ and $SK_t \vdash \alpha, \neg (\alpha \wedge  \beta)$. From $SK_t \vdash \alpha \wedge  \beta, \neg \alpha$ and applying \emph{weakening} and \emph{exchange} we have $SK_t \vdash \neg \alpha, \beta,\alpha \wedge  \beta$. By Proposition \ref{prop4.2}, we have $SK_t \vdash \beta, \neg \beta$ and applying \emph{weakening} and \emph{exchange} we have $SK_t \vdash \neg \alpha, \beta, \neg \alpha, \neg \beta$, and using the rule $\vee$ we get $SK_t \vdash \neg \alpha, \beta, \neg \alpha \vee \neg \beta$ which by definition is $SK_t \vdash \neg \alpha, \beta, \neg (\alpha \wedge \beta)$. Therefore, we obtain $SK_t \vdash \neg \alpha, \beta$ (using the cut rule).
\end{proof}

\begin{prop}\label{prop5.2} The structure $\langle \Fs\!/\!\equiv , \wedge , \vee , \neg, \G ,\H ,\P , \F, 0, 1  \rangle$ is a tense algebra. 
\end{prop}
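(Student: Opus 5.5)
The plan is to reuse Proposition~\ref{prop5.1} for the Boolean reduct, so only conditions (t0)--(t3) remain to be checked. Throughout I will use Proposition~\ref{proporden}: $[\alpha]\leq[\beta]$ holds iff $SK_t\vdash\neg\alpha,\beta$. An equality $[\alpha]=[\beta]$ is obtained from the two inequalities by antisymmetry. I will also use one auxiliary move repeatedly, which I call \emph{boxing}. If $SK_t\vdash\Delta$, then weakening gives $\Delta,{}^\P\!\{\,\}$, and turn (with $\Gamma=\Delta$ and the empty sequent inside the box) gives ${}^\F\!\{\Delta\}$. Symmetrically, from $\Delta,{}^\F\!\{\,\}$ turn gives ${}^\P\!\{\Delta\}$. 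This is exactly the move used in Proposition~\ref{prop4.2}.

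\textbf{(t0)} is immediate: by the inductive definition of $\neg$ on $\Fs$, the formulas $\neg\G\alpha$ and $\F\neg\alpha$ are literally identical, as are $\neg\H\alpha$ and $\P\neg\alpha$. Hence $\F\neg[\alpha]=[\F\neg\alpha]=[\neg\G\alpha]=\neg\G[\alpha]$, and likewise for $\P$ and $\H$.

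\textbf{(t1)}: We need $[\G(v_0\vee\neg v_0)]=1$. The inequality $\leq 1$ follows by weakening the provable sequent $v_0\vee\neg v_0$ and using exchange. For $1\leq[\G(v_0\vee\neg v_0)]$, box $v_0\vee\neg v_0$ to obtain ${}^\F\!\{v_0\vee\neg v_0\}$, apply rule $\G$ with empty $\Gamma$, and then weaken by $\neg(v_0\vee\neg v_0)$. The case of $\H$ is symmetric.

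\textbf{(t2)}: I will prove three inequalities, $[\G(\alpha\wedge\beta)]\leq[\G\alpha]$, $[\G(\alpha\wedge\beta)]\leq[\G\beta]$, and $[\G\alpha]\wedge[\G\beta]\leq[\G(\alpha\wedge\beta)]$. Together with the lattice laws of Proposition~\ref{prop5.1}, they give the equality.
\begin{itemize}
\item For the first, we need $SK_t\vdash \F(\neg\alpha\vee\neg\beta),\G\alpha$. Start from $\neg\alpha\vee\neg\beta,\alpha$, which follows from Proposition~\ref{prop4.2} by weakening, $\vee$ and exchange. Box it to get ${}^\F\!\{\alpha,\neg\alpha\vee\neg\beta\}$. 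Apply rule $\F$ to get ${}^\F\!\{\alpha\},\F(\neg\alpha\vee\neg\beta)$, then exchange and rule $\G$.
\item The second inequality is proved in the same way.
\item For the third, we need $SK_t\vdash \F\neg\alpha\vee\F\neg\beta,\G(\alpha\wedge\beta)$. Obtain $\neg\alpha,\neg\beta,\alpha\wedge\beta$ from Proposition~\ref{prop4.2} using weakening, exchange and $\wedge$. Box it, apply rule $\F$ twice (with exchanges inside the box) to reach ${}^\F\!\{\alpha\wedge\beta\},\F\neg\beta,\F\neg\alpha$, then apply $\G$, exchange, and $\vee$.
\end{itemize}
The case of $\H$ is symmetric, using $\P$ and boxing into ${}^\P\!\{\ \}$.

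\textbf{(t3)}: We need $SK_t\vdash\neg\alpha,\G\P\alpha$. Box $\neg\alpha,\alpha$ into ${}^\P\!\{\neg\alpha,\alpha\}$ and apply rule $\P$ to obtain ${}^\P\!\{\neg\alpha\},\P\alpha$. Exchange this to $\P\alpha,{}^\P\!\{\neg\alpha\}$ and apply turn with $\Gamma=\P\alpha$ and $\Delta=\neg\alpha$, which yields ${}^\F\!\{\P\alpha\},\neg\alpha$. Rule $\G$ then gives $\G\P\alpha,\neg\alpha$, and a final exchange finishes. The inequality $x\leq\H\F x$ is the mirror image, using the other turn rule.

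I expect the main obstacle to be (t3). It is the only place where the turn rule must be applied to a nonempty outer context in order to move a formula across a box. Getting the orientation of the two turn rules right, and keeping the negations straight (for instance that $\neg\P\alpha$ is $\H\neg\alpha$), will need care. The remaining steps are routine bookkeeping with exchange and weakening.
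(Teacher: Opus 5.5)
Your proof is correct and follows essentially the same route as the paper: Proposition~\ref{prop5.1} handles the Boolean reduct, and (t0)--(t3) are checked by explicit $SK_t$ derivations built on the weakening-plus-turn ``boxing'' move. Your reduction of each condition to a single sequent $\neg\alpha,\beta$ via Proposition~\ref{proporden}, and your use of $\wedge$ at top level before boxing in (t2), only streamline the paper's derivations. The slips are ordering bookkeeping (rules $\F$ and $\G$ need the box in final position, so in (t3) the exchange must come before rule $\G$, and ``exchanges inside the box'' should be done before boxing or simulated by two turns), and none of them affects correctness.
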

\begin{proof}
Let us show that $\G 1 =1$. It is enough to show that $\G([v_0\vee\neg v_0]) \equiv [v_0\vee\neg v_0]$, for a propositional variable $v_0$.
That is, we have to show that $\vdash _{SK_t} v_0\vee\neg v_0, \neg \G(v_0\vee\neg v_0)$ and $\vdash _{SK_t} \neg (v_0\vee\neg v_0), \G(v_0\vee\neg v_0)$.
Consider the following proofs.

\begin{prooftree}
\AxiomC{$v_0,\neg v_0$}
\RightLabel{$\vee$}
\UnaryInfC{$v_0\vee\neg v_0$}
\RightLabel{w}
\UnaryInfC{$v_0\vee\neg v_0, \neg \G(v_0\vee\neg v_0)$}
\end{prooftree}
and
\begin{prooftree}
\AxiomC{$v_0,\neg v_0$}
\RightLabel{$\vee$}
\UnaryInfC{$v_0\vee\neg v_0$}
\RightLabel{w}
\UnaryInfC{$v_0\vee\neg v_0, ^\P\!\{ \, \, \}$}
\RightLabel{t}
\UnaryInfC{$^\F\!\{ v_0\vee\neg v_0 \}$}
\RightLabel{$\G$}
\UnaryInfC{$\G(v_0\vee\neg v_0)$}
\RightLabel{w}
\UnaryInfC{$\G(v_0\vee\neg v_0), \neg(v_0\vee\neg v_0)$}
\RightLabel{e}
\UnaryInfC{$\neg(v_0\vee\neg v_0), \G(v_0\vee\neg v_0)$}
\end{prooftree}
Analogously, we prove $\H 1=1$. In order to prove the remaining identities recall that, by Proposition \ref{prop4.2}, for every formula $\alpha$ we have $\vdash_{SK_t}\alpha,\neg \alpha$.\\
Let us prove that $\G([\alpha] \wedge [\beta])=\G[\alpha] \wedge \G[\beta]$, that is, we have to prove that  $\G(\alpha \wedge \beta)\equiv \G \alpha \wedge \G \beta$. Consider the following proofs.
\begin{prooftree}
\AxiomC{$\begin{array}{c} \vdots \\ \alpha, \neg \alpha \end{array}$}
\RightLabel{w}
\UnaryInfC{$\alpha, \neg \alpha, \neg \beta$}
\RightLabel{w}
\UnaryInfC{$\alpha, \neg \alpha, \neg \beta, ^\P\!\{ \, \, \}$}
\RightLabel{t}
\UnaryInfC{$^\F\!\{\alpha, \neg \alpha, \neg \beta\}$}
\RightLabel{$\F$}
\UnaryInfC{$^\F\!\{\alpha, \neg \alpha\}, \F\neg \beta$}
\RightLabel{$\F$}
\UnaryInfC{$^\F\!\{\alpha\}, \F\neg \alpha, \F\neg \beta$}
\RightLabel{e's, t, e}
\UnaryInfC{$^\P\!\{\F\neg \alpha, \F\neg \beta\}, \alpha$}
\AxiomC{$\begin{array}{c} \vdots \\ \beta, \neg \beta \end{array}$}
\RightLabel{w}
\UnaryInfC{$\beta, \neg \beta, \neg \alpha$}
\RightLabel{w}
\UnaryInfC{$\beta, \neg \beta, \neg \alpha, ^\P\!\{ \, \, \}$}
\RightLabel{t}
\UnaryInfC{$^\F\!\{\beta, \neg \beta, \neg \alpha\}$}
\RightLabel{$\F$}
\UnaryInfC{$^\F\!\{\beta, \neg \beta\}, \F\neg \alpha$}
\RightLabel{$\F$}
\UnaryInfC{$^\F\{\beta\}, \F\neg \beta, \F\neg \alpha$}
\RightLabel{e's, t, e}
\UnaryInfC{$^\P\{\F\neg \alpha, \F\neg \beta\}, \beta$}
\RightLabel{$\wedge$}
\BinaryInfC{$^\P\!\{\F\neg \alpha, \F\neg \beta\}, \alpha \wedge \beta$}
\RightLabel{e, t, e}
\UnaryInfC{$\F\neg \alpha, \F\neg \beta, ^\F\!\{\alpha \wedge \beta\}$}
\RightLabel{$\G$}
\UnaryInfC{$\F\neg \alpha, \F\neg \beta, \G(\alpha \wedge \beta)$}
\RightLabel{e's}
\UnaryInfC{$\G(\alpha \wedge \beta), \F\neg \alpha, \F\neg \beta$}
\RightLabel{$\vee$}
\UnaryInfC{$\G(\alpha \wedge \beta), \F\neg \alpha \vee \F\neg \beta$}
\end{prooftree}

\begin{prooftree}
\AxiomC{$\begin{array}{c} \vdots \\ \alpha, \neg \alpha \end{array}$}
\RightLabel{w}
\UnaryInfC{$\alpha, \neg \alpha, \neg \beta$}
\RightLabel{$\vee$}
\UnaryInfC{$\alpha, \neg \alpha \vee \neg \beta$}
\RightLabel{w}
\UnaryInfC{$\alpha, \neg \alpha \vee \neg \beta, ^\P\!\{ \, \, \}$}
\RightLabel{t}
\UnaryInfC{$^\F\!\{\alpha, \neg \alpha \vee \neg \beta\}$}
\RightLabel{$\F$}
\UnaryInfC{$^\F\!\{\alpha\}, \F(\neg \alpha \vee \neg \beta)$}
\RightLabel{e}
\UnaryInfC{$\F(\neg \alpha \vee \neg \beta), ^\F\!\{\alpha\}$}
\RightLabel{$\G$}
\UnaryInfC{$\F(\neg \alpha \vee \neg \beta), \G \alpha$}
\AxiomC{$\begin{array}{c} \vdots \\ \beta, \neg \beta \end{array}$}
\RightLabel{w}
\UnaryInfC{$\beta, \neg \beta, \neg \alpha$}
\RightLabel{e}
\UnaryInfC{$\beta, \neg \alpha, \neg \beta$}
\RightLabel{$\vee$}
\UnaryInfC{$\beta, \neg \alpha \vee \neg \beta$}
\RightLabel{w}
\UnaryInfC{$\beta, \neg \alpha \vee \neg \beta, ^\P\!\{ \, \, \}$}
\RightLabel{t}
\UnaryInfC{$^\F\!\{\beta, \neg \alpha \vee \neg \beta\}$}
\RightLabel{$\F$}
\UnaryInfC{$^\F\!\{\beta\}, \F(\neg \alpha \vee \neg \beta)$}
\RightLabel{e}
\UnaryInfC{$\F(\neg \alpha \vee \neg \beta), ^\F\!\{\beta\}$}
\RightLabel{$\G$}
\UnaryInfC{$\F(\neg \alpha \vee \neg \beta), \G \beta$}
\RightLabel{$\wedge$}
\BinaryInfC{$\F(\neg \alpha \vee \neg \beta), \G \alpha \wedge \G \beta$}
\end{prooftree}

Analogously, we prove that $\H(\alpha\wedge \beta)= \H \alpha \wedge \H \beta$. To prove that $[\alpha] \leq \G\P[\alpha]$ and $[\alpha] \leq \H\F[\alpha]$ we have to prove $[\alpha] \wedge \G\P[\alpha]=[\alpha]$ and $[\alpha] \wedge \H\F[\alpha]=[\alpha]$. Consider the following proofs.
\begin{multicols}{2}
\begin{prooftree}
\AxiomC{$\begin{array}{c} \vdots \\ \alpha, \neg \alpha \end{array}$}
\RightLabel{w}
\UnaryInfC{$\alpha, \neg \alpha, \neg \G\P \alpha$}
\RightLabel{$\vee$}
\UnaryInfC{$\alpha, \neg \alpha \vee \neg \G \P \alpha$}
\RightLabel{e}
\UnaryInfC{$\neg \alpha \vee \neg \G \P \alpha, \alpha$}
\end{prooftree}

\begin{prooftree}
\AxiomC{$\alpha, \neg \alpha$}
\RightLabel{e}
\UnaryInfC{$\neg \alpha, \alpha$}
\AxiomC{$\begin{array}{c} \vdots \\ \alpha, \neg \alpha \end{array}$}
\RightLabel{w}
\UnaryInfC{$\alpha, \neg \alpha, ^\F\!\{ \, \, \}$}
\RightLabel{e}
\UnaryInfC{$\neg \alpha, ^\F\!\{ \, \, \}$}
\RightLabel{t}
\UnaryInfC{$^\P\!\{\neg \alpha, \alpha\}$}
\RightLabel{$\P$}
\UnaryInfC{$^\P\!\{\neg \alpha\}, \P \alpha$}
\RightLabel{e, t, e}
\UnaryInfC{$\neg \alpha, ^\F\!\{ \P \alpha \}$}
\RightLabel{$\G$}
\UnaryInfC{$\neg \alpha, \G\P \alpha$}
\RightLabel{$\wedge$}
\BinaryInfC{$\neg \alpha, \alpha \wedge \G\P \alpha$}
\end{prooftree}    
\end{multicols}

Analogously we prove $[\alpha] \leq \H\F[\alpha]$. Finally, we have to prove that $\P[\alpha]=\neg \H \neg [\alpha]$. This is equivalent to proving  $\P \alpha \equiv \neg \H \neg \alpha$ and by definition of $\neg$, this is equivalent to proving $\P \alpha \equiv \P \neg \neg \alpha$ and this is immediate since $\neg \neg \alpha = \alpha$ (by Proposition \ref{prop4.1}) and $\equiv$ is reflexive. Analogously, we prove $\F[\alpha]=\neg \G \neg [\alpha]$.
\end{proof}

\noindent Consider the function $h:Var \to \Fs\!/\!\equiv$, where $Var$ is the set of all propositional variables, defined in the following way, for every $v\in Var$
$$h(v)=[v]$$
Then, $h$ can be extended to $\overline{h}:\Fs \to \Fs\!/\!\equiv$ in a natural way as follows

\begin{multicols}{2}
\begin{enumerate}[]
  \item $\overline{h}(v)=h(v)=[v]$, $v\in Var$,
  \item $\overline{h}(\alpha \wedge \beta)=\overline{h}(\alpha) \wedge \overline{h}(\beta)$,
  \item $\overline{h}(\alpha \vee \beta)=\overline{h}(\alpha) \vee \overline{h}(\beta)$, 
  \item $\overline{h}(\neg \alpha)=\neg \overline{h}(\alpha)$,
  \item $\overline{h}(\P \alpha)=\P \overline{h}(\alpha)$,
  \item $\overline{h}(\F \alpha)=\F \overline{h}(\alpha)$,
  \item $\overline{h}(\H \alpha)=\H \overline{h}(\alpha)$,
  \item $\overline{h}(\G \alpha)=\G \overline{h}(\alpha)$.
\end{enumerate}
\end{multicols}
It is clear that $\overline{h}$ is a homomorphism.
\begin{lem} For every $\alpha\in \Fs$, $\overline{h}(\alpha)=[\alpha]$.
\end{lem}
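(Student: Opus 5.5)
We argue by structural induction on the complexity of $\alpha\in\Fs$, following the inductive definition of formulas in Kashima's language. In that language the atomic cases are the propositional variables $v$ together with the negated variables $\neg v$, and the compound formulas are built using only $\wedge,\vee,\H,\G,\P,\F$. The clauses defining $\overline{h}$ mirror exactly the definitions of the operations on $\Fs\!/\!\equiv$. So each inductive step is a one-line computation, once we know that those operations are well defined. That well-definedness is guaranteed by the congruence proposition.

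For the base cases: if $\alpha=v$ is a variable, then $\overline{h}(v)=h(v)=[v]$ by definition. If $\alpha=\neg v$, we use the clause for $\neg$ to get $\overline{h}(\neg v)=\neg\overline{h}(v)=\neg[v]$. By the definition of negation in the quotient, $\neg[v]=[\neg v]$. For the inductive step, suppose $\alpha=\beta\circ\gamma$ with $\circ\in\{\wedge,\vee\}$. By the induction hypothesis,
\[
\overline{h}(\beta\circ\gamma)=\overline{h}(\beta)\circ\overline{h}(\gamma)=[\beta]\circ[\gamma]=[\beta\circ\gamma],
\]
and the last equality is the definition of the operation on $\Fs\!/\!\equiv$. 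Similarly, for $\#\in\{\G,\H,\F,\P\}$ we have $\overline{h}(\#\beta)=\#\overline{h}(\beta)=\#[\beta]=[\#\beta]$.

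The only point needing care is the role of $\neg$. In $SK_t$ the negation $\neg$ is not a primitive connective on compound formulas; it is the defined operation given by the De Morgan and tense-duality clauses. Because of this, the clause $\overline{h}(\neg\alpha)=\neg\overline{h}(\alpha)$ for arbitrary $\alpha$ is not part of the recursive definition of $\overline{h}$. Instead, it is a consequence we must check is consistent with the other clauses. I would verify this by a separate, simultaneous induction showing $\overline{h}(\neg\alpha)=\neg[\alpha]$. For example:
\[
\overline{h}(\neg(\beta\wedge\gamma))=\overline{h}(\neg\beta\vee\neg\gamma)=\neg[\beta]\vee\neg[\gamma]=\neg([\beta]\wedge[\gamma]),
\]
using the De Morgan law in the Boolean algebra of Proposition~\ref{prop5.1}. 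Likewise
\[
\overline{h}(\neg\H\beta)=\overline{h}(\P\neg\beta)=\P\neg[\beta]=\neg\H[\beta],
\]
using (t0) in the tense algebra of Proposition~\ref{prop5.2}. The case $\neg\neg v=v$ is handled by Proposition~\ref{prop4.1} and involution. With this compatibility in place, $\neg[\alpha]=[\neg\alpha]$ holds by definition, and the main identity $\overline{h}(\alpha)=[\alpha]$ follows for all $\alpha$.
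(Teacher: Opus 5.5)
Your proof is correct and follows the paper's argument, which says only ``by induction on the complexity of $\alpha$''; your base cases $v,\neg v$ and your steps for $\wedge,\vee,\G,\H,\F,\P$ carry out that induction. Your separate check of $\overline{h}(\neg\alpha)=\neg\overline{h}(\alpha)$ is correct but not needed, and the main identity does not depend on it: once $\overline{h}(\alpha)=[\alpha]$ holds for every $\alpha\in\Fs$, you get $\overline{h}(\neg\alpha)=[\neg\alpha]=\neg[\alpha]=\neg\overline{h}(\alpha)$ directly, without De Morgan or (t0).
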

\begin{proof} By induction on the complexity of $\alpha$.
\end{proof}

\begin{theo} (Completeness for formulas) If $\alpha\in Fm$ is valid then $SK_t \vdash\alpha$.
\end{theo}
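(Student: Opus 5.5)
We argue via the Lindenbaum--Tarski algebra built above. Suppose $\alpha\in\Fs$ is valid. By Proposition~\ref{prop5.2}, $\langle \Fs\!/\!\equiv , \wedge , \vee , \neg, \G ,\H ,\P , \F, 0, 1 \rangle$ is a tense Boolean algebra, and $\overline{h}$ is a homomorphism from $\mathfrak{F_m}_{SK_t}$ into its $\La$-reduct. (Here we use that $\overline{h}(\neg\alpha)=\neg\overline{h}(\alpha)$ is consistent with the defined negation on $\Fs$; this is what the inductive definition of $\neg$ and Proposition~\ref{prop4.1} guarantee.) Validity of $\alpha$ in this particular algebra gives $\overline{h}(\alpha)=1$. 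By the preceding lemma, $\overline{h}(\alpha)=[\alpha]$, so $[\alpha]=[v_0\vee\neg v_0]$, that is, $\alpha\equiv v_0\vee\neg v_0$. Unfolding the definition of $\equiv$, we obtain in particular
\[
SK_t\vdash \alpha,\neg(v_0\vee\neg v_0),
\]
and $\neg(v_0\vee\neg v_0)=\neg v_0\wedge v_0$ by the definition of $\neg$ and Proposition~\ref{prop4.1}.

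It remains to eliminate the extra formula $\neg v_0\wedge v_0$ by a single application of cut with cut formula $\beta:=v_0\vee\neg v_0$, so that $\neg\beta=\neg v_0\wedge v_0$. The two premises needed are $\alpha,\beta$ and $\alpha,\neg\beta$.
\begin{itemize}
\item[(a)] The premise $\alpha,\neg\beta$ is exactly the sequent obtained above, after exchange if necessary.
\item[(b)] The premise $\alpha,\beta$ comes from the axiom $v_0,\neg v_0$: the rule $\vee$ gives $v_0\vee\neg v_0$, then weakening gives $v_0\vee\neg v_0,\alpha$, and exchange gives $\alpha, v_0\vee\neg v_0$.
\end{itemize}
The cut rule in the form $\Gamma,\beta$ and $\Gamma,\neg\beta$ yields $\Gamma$, with $\Gamma=\alpha$, and so gives $SK_t\vdash\alpha$.

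The main obstacle is bookkeeping rather than substance. The cut rule requires the same context $\Gamma$ in both premises, with the cut formula and its negation last. One must also check that the formal negation of $v_0\vee\neg v_0$ is literally the formula appearing in the sequent from $\equiv$; Proposition~\ref{prop4.1} settles this, giving $\neg\neg v_0=v_0$. Finally, the choice of $v_0$ in the definition of $1$ is harmless, since any variable works, and the variable need not be distinct from those in $\alpha$.
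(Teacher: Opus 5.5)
Your proposal is correct and is essentially the paper's own proof. You map $\alpha$ by $\overline{h}$ into the Lindenbaum--Tarski tense algebra, read off $SK_t\vdash\alpha,\neg(v_0\vee\neg v_0)$ from $\alpha\equiv v_0\vee\neg v_0$, and cut it against $\alpha, v_0\vee\neg v_0$, which you obtain from the axiom $v_0,\neg v_0$ by $\vee$, weakening and exchange. One small point about your parenthetical: $\overline{h}(\neg\alpha)=\neg\overline{h}(\alpha)$ actually follows from the quotient definition $\neg[\alpha]:=[\neg\alpha]$ (well defined by the congruence proposition) together with $\overline{h}(\alpha)=[\alpha]$, not from Proposition~\ref{prop4.1}, but this does not affect the argument.
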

\begin{proof}
Suppose that $\alpha$ is valid, then $f(\alpha)=1$ for every homomorphism $f:Fm \to A$ for every  $A \in \mathbb{B}_t$. In particular, $\overline{h}(\alpha)=1$. Then, $\overline{\alpha}=1$ which means that  $\alpha \equiv v_0\vee\neg v_0$, for some propositional variable $v_0$. By definition of $\equiv$, we have that $SK_t \vdash \alpha, \neg (v_0\vee\neg v_0)$ y $SK_t \vdash \neg \alpha, v_0\vee\neg v_0$. Then, we can construct the following proof

\begin{prooftree}
\AxiomC{$v_0, \neg v_0$}
\RightLabel{$\vee$}
\UnaryInfC{$v_0\vee \neg v_0$}
\RightLabel{w}
\UnaryInfC{$v_0\vee \neg v_0 ,\alpha$}
\RightLabel{e}
\UnaryInfC{$\alpha, v_0\vee \neg v_0$}
\AxiomC{$\begin{array}{c} \vdots \\ \alpha, \neg(v_0\vee \neg v_0) \end{array}$}
\RightLabel{cut}
\BinaryInfC{$\alpha$}
\end{prooftree}
Therefore, $SK_t \vdash \alpha$. \end{proof}

\section{Conclusions}

In this paper, we have studied the minimal tense logic $K_t$ from both algebraic and proof-theoretic perspectives. We first introduced the degree-of-truth-preserving logic associated with the class of tense Boolean algebras and proved that it coincides with $K_t$. This provides an alternative characterization of the minimal tense logic in terms of preservation of degrees of truth and shows that the standard algebraic semantics of tense Boolean algebras can be naturally connected with a degree-preserving consequence relation.

We then considered Kashima's Gentzen-style calculus for $K_t$ and established its soundness and completeness with respect to the degree-of-truth-preserving semantics of tense Boolean algebras. The main ingredient in the completeness proof was an adaptation of the Lindenbaum--Tarski construction to Kashima's nested sequent framework. This construction makes it possible to construct the relevant tense Boolean algebra directly from the proof-theoretic system, thereby providing a direct algebraic completeness proof for Kashima's calculus.

The connection established here between Kashima's nested-sequent calculus and tense Boolean algebras suggests several directions for further research. In particular, it would be interesting to investigate whether the algebraic methods for cut elimination developed by Belardinelli, Jipsen, and Ono \cite{BJO} can be extended to the present tense setting. Such an extension could provide an algebraic counterpart of the cut-elimination theorem for Kashima's calculus and, more generally, shed light on the relationship between nested-sequent proof theory and the algebraic structure of tense Boolean algebras.

\end{document}